\documentclass[12pt]{article}

\usepackage[left=2.7cm,bottom=2.7cm,right=2.7cm,top=2.7cm]{geometry}

\usepackage{enumerate}
\usepackage{multirow}
\usepackage{tabularx}
\usepackage{indentfirst}
\usepackage{setspace}
\usepackage{color}
\usepackage{url}            
\usepackage{booktabs}       
\usepackage{makecell}
\usepackage{comment}
\usepackage{amsfonts}       
\usepackage{amsmath}
\usepackage{amsfonts}
\usepackage{mathrsfs}
\usepackage{algorithm}
\usepackage{algorithmic}
\usepackage{mathtools}
\usepackage{tikz}
\usetikzlibrary{patterns}
\usetikzlibrary{decorations.pathreplacing,calligraphy}
\usetikzlibrary {patterns.meta}
\tikzdeclarepattern{
  name=mylines,
  parameters={
      \pgfkeysvalueof{/pgf/pattern keys/size},
      \pgfkeysvalueof{/pgf/pattern keys/angle},
      \pgfkeysvalueof{/pgf/pattern keys/line width},
  },
  bounding box={
    (0,-0.5*\pgfkeysvalueof{/pgf/pattern keys/line width}) and
    (\pgfkeysvalueof{/pgf/pattern keys/size},
0.5*\pgfkeysvalueof{/pgf/pattern keys/line width})},
  tile size={(\pgfkeysvalueof{/pgf/pattern keys/size},
\pgfkeysvalueof{/pgf/pattern keys/size})},
  tile transformation={rotate=\pgfkeysvalueof{/pgf/pattern keys/angle}},
  defaults={
    size/.initial=5pt,
    angle/.initial=45,
    line width/.initial=.4pt,
  },
  code={
      \draw [line width=\pgfkeysvalueof{/pgf/pattern keys/line width}]
        (0,0) -- (\pgfkeysvalueof{/pgf/pattern keys/size},0);
  },
}

\usepackage{amsmath,amsfonts}
\usepackage{array}
\usepackage[caption=false,font=normalsize,labelfont=sf,textfont=sf]{subfig}
\usepackage{textcomp}
\usepackage{stfloats}
\usepackage{url}
\usepackage{verbatim}
\usepackage{graphicx}
\usepackage{mathtools}
\usepackage{algorithm}
\usepackage{algorithmic}
\usepackage{cases}
\usepackage{enumerate}
\usepackage{tabularx}
\usepackage{multirow}
\usepackage{algorithm}
\usepackage{authblk}
\usepackage{algorithmic}
\usepackage{indentfirst}
\usepackage{setspace}
\usepackage{color}
\usepackage[utf8]{inputenc} 
\usepackage[T1]{fontenc}    
\usepackage{hyperref}       
\usepackage{url}            
\usepackage{booktabs}       
\usepackage{amsfonts}       
\usepackage{nicefrac}       
\usepackage{microtype}      
\usepackage{lipsum}		
\usepackage{graphicx}
\usepackage{natbib}
\usepackage{doi}
\usepackage{amssymb}                  
\usepackage{mathrsfs} 
\usepackage[resetlabels]{multibib}
\usepackage[bottom]{footmisc}
\newtheorem{definition}{Definition}
\newtheorem{lemma}{Lemma}
\newtheorem{theorem}{Theorem}
\newtheorem{corollary}{Corollary}
\newtheorem{proof}{Proof}
\newtheorem{remark}{Remark}
\newtheorem{assumption}{Assumption}

\hyphenation{op-tical net-works semi-conduc-tor IEEE-Xplore}
\usepackage[utf8]{inputenc}
\usepackage{tikz}

\def\D{\Delta}
\def\A{\hat{A}(\bar{s}_0)}
\def\s{\hat{s}(\bar{s}_0)}
\begin{document}

\title{A minimax optimal approach to high-dimensional double sparse linear regression}

\author[1]{Yanhang Zhang}
\author[2]{Zhifan Li}
\author[1]{Shixiang Liu}
\author[1, 3]{Jianxin Yin}

\affil[1]{\footnotesize School of Statistics, Renmin University of China}
\affil[2]{\footnotesize Beijing Institute of Mathematical Sciences and Applications}
\affil[3]{\footnotesize Center for Applied Statistics and School of Statistics, Renmin University of China}

\date{}
\maketitle \sloppy

\begin{abstract}
  In this paper, we focus our attention on the high-dimensional double sparse linear regression, that is, a combination of element-wise and group-wise sparsity. To address this problem, we propose an IHT-style (iterative hard thresholding) procedure that dynamically updates the threshold at each step. We establish the matching upper and lower bounds for parameter estimation, showing the optimality of our proposal in the minimax sense. More importantly, we introduce a fully adaptive optimal procedure designed to address unknown sparsity and noise levels. Our adaptive procedure demonstrates optimal statistical accuracy with fast convergence. Additionally, we elucidate the significance of the element-wise sparsity level $s_0$ as the trade-off between IHT and group IHT, underscoring the superior performance of our method over both. Leveraging the beta-min condition, we establish that our IHT-style procedure can attain the oracle estimation rate and achieve almost full recovery of the true support set at both the element level and group level.
   Finally, we demonstrate the superiority of our method by comparing it with several state-of-the-art algorithms on both synthetic and real-world datasets.
\end{abstract}

\begin{keywords}
 double sparsity, iterative hard thresholding, minimax optimality, fully adaptive procedure, oracle estimation rate.
\end{keywords}

\section{Introduction}
Over the last decade, the rapid growth of high-dimensional data has drawn broad attention to sparse learning across many scientific communities,
with plenty of remarkable achievements in algorithms, theory, and applications.
One of the well-studied problems is the sparsity-constrained linear regression, also known as the best subset selection.
We consider a linear model
$$
y = X\beta^* + \xi,
$$
where $y\in \mathbb{R}^n$ is the response vector, $X \in \mathbb{R}^{n\times p}$ is the design matrix, $\beta^* \in \mathbb{R}^p$ is the underlying regression coefficient and $\xi \in \mathbb{R}^n$ is the sub-Gaussian random error with scale parameter $\sigma^2$. 
In the high-dimensional framework, we focus on the case where $p \gg n$ and the coefficient $\beta^*$ is sparse in the sense that only a few covariates are important to the model.
Traditionally, element-wise $\ell_0$ sparse problem considers the parameter space 
\begin{equation*}\label{sparsity1}
  \beta^* \in \{\beta \in \mathbb{R}^p:\sum_{i=1}^p\mathrm{I}(\beta_i\ne 0 )\le s'\},
\end{equation*}
where $\beta_i$ is the $i$th entry of $\beta$ and $s'$ is some positive integer, which controls the sparsity level of the sparsity-constrained linear regression problem.
Best subset selection is a famous NP-hard problem \citep{natarajan1995sparse},
and it has been widely studied in the fields of statistics and machine learning \citep{bertsimas2016, zhang2018,huang2018constructive, Zhu202014241}.

Recently, an increasing number of studies on high-dimensional variable selection have focused on the concept of structured sparsity. These studies assume that important variables form specific structures or patterns, with group-wise sparsity being one of the most prominent examples.
The group-wise $\ell_0$ sparsity considers the parameter space
\begin{equation*}
\beta^* \in \{\beta \in \mathbb{R}^p:\sum_{j=1}^m \mathrm{I}(\beta_{G_j} \neq 0) \le s\},
\end{equation*}
where $\{G_j\}_{j=1}^m$ are the indices of $m$ non-overlapping groups such that
$\cup_{j=1}^mG_j=\{1,\ldots,p\}$.  
Here positive integer $s$ controls the number of nonzero groups in the model.
The group sparsity means that within a group, the coefficients are either all zeros or at least one nonzero. In particular, when $|G_1| = \ldots = |G_m| = 1$, the group selection problem boils down to the standard best subset selection.
To date, a variety of practical algorithms have been explored and investigated to conduct group $\ell_0$ selection \citep{BOMP, huang11b, hazimeh2021grouped, zhang2022}.

When considering each group that has been selected, it is generally accepted that only a few of the variables that make up the group are actually significant.
We refer to this idea as double sparsity and define it as follows:
\begin{definition}[Double sparsity]
The regression coefficient $\beta^* \in \mathbb{R}^{p}$ 
is called $(s, s_0)$-sparse if
\begin{equation}\label{def1}
\|\beta^*\|_{0, 2} \coloneqq \sum_{j=1}^m \mathrm{I}(\beta^*_{G_j} \neq 0) \le s
\quad \text{and}\quad
\|\beta^*\|_{0} \coloneqq \sum_{i=1}^{p} \mathrm{I}(\beta^*_i \neq 0) \le s s_0 .
\end{equation}
\end{definition}
Double sparsity promotes sparsity both within and between groups. Specifically, it restricts the number of nonzero groups included in the model to $s$, and within these $s$ groups, the number of nonzero elements must be no more than $ss_0$. 
Intuitively, $s_0$ can be thought of as the average sparsity within the $s$ selected groups, providing insight into the sparsity levels within the nonzero groups.

\subsection{Related Work}

Recently, sparse group selection has emerged as a prominent area of high-dimensional structured sparsity learning. To tackle this problem, a combination of two penalized methods is often considered.
In order to perform sparse group selection, \citet{friedman2010note} and \citet{simon2013sparse} proposed sparse group Lasso (SGLasso), a combination of the Lasso penalty \citep{T1996} and the group Lasso penalty \citep{Y2006} joined together. Numerous efforts have been dedicated to accelerating the convergence of SGLasso \citep{ida2019fast, zhang2020efficient}.

The theoretical research on double sparsity began with \citet{cai2019sparse}, which established the minimax lower bounds for the estimation error of the double sparse linear regression, and the near-optimal upper bounds for the estimation error of SGLasso are obtained under the irrepresentable condition.
Moreover, they provided the theoretical guarantees for both the sample
complexity and estimation error of SGLasso. 
\citet{li2022minimax} concentrated on the Gaussian location model with a double sparse structure. They established the minimax rates for the estimation error over $\ell_u(\ell_{q})$ mixed-norm for $u,q \in [0, 1]$.
Despite these advancements, there still remains a dearth of methods with optimal theoretical guarantees.

Traditional convex relaxation-based methods, such as SGLasso, inherently introduce estimation bias for the coefficients, especially when large coefficients undergo significant shrinkage.
Moreover, \citet{bellec2018noise} demonstrated that convex estimators, such as the Lasso-type estimator, cannot attain the oracle estimation rate $O(\sigma\sqrt{\frac{ss_0}{n}})$, even when the beta-min condition is satisfied.
This phenomenon motivates us to develop computationally feasible non-convex algorithms, with iterative hard thresholding (IHT, \citet{blumensath2009iterative}) being a representative example.
IHT and its variants have garnered increasing attention for their efficacy in addressing a variety of high-dimensional statistical inference problems \citep{blumensath2010normalized,jain2014iterative, yuan2020dual, JMLR:v22:19-969}.
Given sparsity level $s'$, IHT performs a gradient descent step on the parameter $\beta$, followed by the selection of the $s'$ largest absolute values at each subsequent step.
Under restricted convexity/smoothness conditions, \citet{jain2014iterative} showed that IHT can obtain a minimax optimal estimator for high-dimensional M-estimation given a sufficient large sparsity level.
\citet{zhang2018} investigated the parameter estimation and support recovery of IHT for both $s=s^*$ and $s\gg s^*$ under RIP-type conditions.
\citet{giraud2021introduction} employed the IHT procedure in the context of linear regression with group sparsity and established the optimal upper bound for parameter estimation.
However, most of the related works consider the known sparsity level $s'$ as prior information,
making it challenging to analyze theoretical guarantees in the non-asymptotic sense without the knowledge of $s'$.
To tackle this problem, \citet{ndaoud2020scaled} proposed a fully adaptive IHT-style procedure, which can achieve the optimal rates for parameter estimation with unknown $s'$.

\subsection{Main Results and Contributions}
In this paper, our goal is to construct feasible methods for double sparse linear regression that are not only efficient but also with optimal statistical properties guaranteed.  To the best of our knowledge, our paper is the first to develop a fully adaptive optimal procedure for high-dimensional double sparse linear regression with unknown $s$, $s_0$, and $\sigma$. 
  
Addressing the signal under the double sparse assumption was an unresolved challenge until \citet{cai2019sparse,li2022minimax}. The approach employed in \citet{cai2019sparse} relies on sub-gradient and dual certificate constructions, applicable only in the context of $\ell_1$-type penalties. An earlier work by \citet{li2022minimax} introduced an IHT-style algorithm for detecting signals with a double sparse structure. They demonstrated the minimax optimality of the proposed algorithm for parameter estimation. However, this algorithm is impractical because it depends on the unknown parameters $s, s_0$, and $\sigma$. Notably, achieving adaptivity for double sparsity is much more challenging than for element-wise or group-wise sparsity. A natural approach is using a grid search technique for tuning the unknown parameters $s$ and $s_0$ such as \citet{cai2019sparse}. However, the grid search approach is computationally infeasible, and difficult to establish optimal guarantees from a theoretical perspective. Motivated by the adaptive framework for element-wise sparsity \citep{verzelen2012minimax, ndaoud2020scaled}, we develop a two-step adaptive procedure for parameter estimation and variable selection in the context of double sparse linear regression. 

Importantly, our procedure is not a simple combination of classical IHT \citep{ndaoud2020scaled} and group IHT \citep{giraud2021introduction}. The sequence of our two-step IHT operators is critical and the order cannot be interchanged. Specifically, reversing the order of these two steps could compromise the logical framework of the proof by contradiction. 

The advantages of double sparse IHT over convex counterparts, such as sparse group Lasso, are evident. Our theory is entirely based on the RIP-type condition, while the theory of sparse group Lasso (cf. \citet{cai2019sparse}) relies on a stronger irrepresentable condition. 
We further establish that under the beta-min conditions, our algorithm can achieve the oracle estimation rate $O(\sigma\sqrt{\frac{ss_0}{n}})$, showcasing the superiority of our algorithm over sparse group Lasso. Moreover, as far as we know, support recovery results in sparse group Lasso have not been established under mild assumptions, while we obtain the almost full recovery \citep{butucea2018variable} at both the element-wise and group-wise levels. This is further supported by synthetic and real-world data analyses.

In conclusion, the main contribution of this paper is summarized as follows:
\begin{itemize}
  \item We introduce a novel double sparse IHT operator that ensures both element-wise and group-wise sparsity. This operator consists of two steps that control model complexity efficiently. Building upon the double sparse IHT operator, we introduce a novel IHT-style procedure that dynamically updates the threshold at each iteration. We analyze upper bounds on the estimation error of our method and establish matching minimax lower bounds for the estimation error $O\left(\sqrt{\frac{\sigma^2}{n}(ss_0\log\frac{ed}{s_0}+s\log \frac{em}{s})} \right)$, conclusively demonstrating the optimality of our proposed approach.
  \item We propose a fully adaptive optimal procedure that handles unknown sparsity levels $s, s_0$ and noise level $\sigma$. Through our research, we demonstrate that the estimator obtained by our adaptive procedure attains optimal performance in the minimax sense. As far as we know, it is the first minimax adaptive procedure for the double sparse linear regression.
    Furthermore, we discover the pivotal role of the element sparsity level $s_0$ as the trade-off between IHT and group IHT, underscoring the superior performance of our method over both. We have implemented our proposals in an open-source R package named $\mathtt{ADSIHT}$.
    
    \item Under the element-wise and group-wise beta-min conditions, we establish that our algorithm attains the oracle estimation rate $O(\sigma\sqrt{\frac{ ss_0}{n}})$. This result indicates that our procedure performs comparably to the ordinary least-squares estimator when given the true support set. It highlights the superiority of our DSIHT procedure over convex counterparts such as sparse group Lasso in theory. Additionally, we demonstrate that our procedure achieves almost full recovery of the true support set at both the element and group levels.  
  \item We apply our proposed methods to both synthetic and real-world datasets, and comprehensive empirical comparisons with several state-of-the-art methods show the superiority of our method across a variety of metrics. Additionally, computational results for a real-world dataset demonstrate that our approach produces more accurate predictive power with fewer variables and groups.
\end{itemize}

\subsection{Organization}
The remainder of the paper is structured as follows. We introduce the notation used throughout the paper towards the end of this section. 
In Section \ref{analysis}, we introduce an IHT-style procedure with fast convergence and establish matching upper and lower bounds for estimation error. In Section \ref{adaptive}, we firstly propose a novel information criterion to determine the optimal stopping time and develop an adaptive procedure for conducting sparse group selection with unknown $s$ and $\sigma$. 
Then, we elucidate the connection between our work, IHT, and group IHT. We also present a minimax adaptive procedure to select the optimal value of $s_0$, which makes our method a fully adaptive optimal procedure. In Section \ref{oracle}, we establish that our DSIHT algorithm achieves the oracle estimation rate and accomplishes almost full recovery under the beta-min conditions.
In Section \ref{numerical}, we present numerical experiments comparing our methods with several state-of-the-art approaches using both synthetic and real-world datasets. Finally, in Section \ref{conclusion}, we provide a summary of our study and offer detailed proofs of our main results in the Appendix.

\subsection{Notations}\label{notation}
For the given sequences $a_n$ and $b_n$, we say that $a_n = O(b_n)$ or $a_n \lesssim b_n$ (resp. $a_n  = \Omega(b_n)$ or $a_n \gtrsim b_n$) when $a_n \le cb_n$ (resp. $a_n \ge c b_n$) for some
positive constant $c$. We write $a_n \asymp b_n$ if $a_n = O(b_n)$ and $a_n  = \Omega(b_n)$.
Let $d=\max_{1\le j \le m} |G_j|$ be the maximum group size.
Denote $[m]$ as the set $\{1,2,\ldots,m\}$, and $\mathrm{I}(\cdot)$ as the indicator function.
Let $x \vee y $ be the maximum of $x$ and $y$, while $x \wedge y $ is the minimum of $x$ and $y$.
Denote $S^* = \{i: \beta^*_i \neq 0\} \subseteq [p]$ as the support set of $\beta^*$.
Similarly, 
let $G^* = \{j: \beta^*_{G_j} \neq 0, G_j\subseteq [p], \textrm{\ and\ } G_j \cap G_{j^\prime} = \emptyset,  \forall j \neq j^\prime\} \subseteq [m]$ be the group-wise support set of $\beta^*$. Let $S_{G^*}=\cup_{j\in G^*}G_j$ be all the elements contained in groups $G^*$. Obviously, $S^* \subseteq S_{G^*}$.
For any set $S$ with cardinality $|S|$,
let $\beta^*_S=(\beta_j, j \in S) \in \mathbb{R}^{|S|}$ and $X_S = (X_j, j \in S) \in \mathbb{R}^{n \times |S|}$, and let $(X^\top X)_{SS} \in \mathbb{R}^{|S|\times |S|}$ be the submatrix of $X^\top X$ whose rows and columns are both listed in $S$.
For a vector $\beta$, denote $\|\beta\|_2$ as its Euclidean norm.
For a matrix $A$, denote $\|A\|_2$ as its spectral norm and $\|A\|_F$ as its Frobenius norm. Denote $\mathbb{I}_p$ as the $p\times p$ identity matrix.
Let $C, C_0, C_1,\ldots$ denote positive constants whose actual values vary from time to time.
Denote the parameter space of double sparsity as $\Theta^{m,d}(s, s_0)$.
Denote $\mathcal{S}^{m,d}(s,s_0)$ as the space consisting of all the support sets of $(s, s_0)$-sparse vector.
Notably, according to the definition of double sparsity,
we have $\mathcal{S}^{m,d}(a_1 s, b_1 s_0) \subseteq \mathcal{S}^{m,d}(a_2 s, b_2 s_0)$ for any positive constants $a_1b_1 =a_2b_2$ and $a_1 \leq a_2$.
For example, $\mathcal{S}^{m,d}(2s,2s_0)$ is a subspace of $\mathcal{S}^{m,d}(4s,s_0)$.
To facilitate computation, we assume $\|X_j\|_2  = \sqrt{n}$, $\forall j \in [p]$. 

\section{Analysis of minimax optimality}\label{analysis}
In Section \ref{operator}, we introduce the double sparse iterative hard thresholding (DSIHT) operator.
In particular, we provide a clear explanation of its construction and develop a DSIHT algorithm with known sparsity and noise levels. Following this, in Section \ref{upperbound}, we analyze the sources of estimation error. Then, we establish the upper bounds for parameter estimation of the DSIHT algorithm in Section \ref{mainresult}. In Section \ref{lowerbound}, we derive the minimax lower bound for double sparse linear regression, which yields that the upper bound in Section \ref{mainresult} is minimax optimal.

\subsection{Double sparse iterative hard thresholding operator}\label{operator}
Given $\lambda, s_0>0$, we define the double sparse iterative hard thresholding operator $\mathcal{T}_{\lambda, s_0}:\mathbb{R}^p \rightarrow \mathbb{R}^p$ as the following two steps:

{\bf Step 1 (Element-wise Condition Checking)}: define an element-wise hard thresholding operator $\mathcal{T}_{\lambda}^{(1)}: \mathbb{R}^p \rightarrow \mathbb{R}^p$ on $\beta \in \mathbb{R}^{p}$ as
$$
      \{\mathcal{T}^{(1)}_{\lambda}(\beta)\}_{j} = \beta_{j}\mathrm{I}(|\beta_{j}|\ge \lambda) , \quad \forall\\ j \in [p].
$$
The operator $\mathcal{T}^{(1)}_{\lambda}$ preserves the signal whose absolute magnitude is greater than or equal to $\lambda$, thus it can be seen as a preliminary screening process for identifying important variables.

{\bf Step 2 (Group-wise Condition Checking)}: denote
\begin{equation*}
    \mathcal{J}_{s_0} \coloneqq \{j \in [m] : \|\beta_{G_j}\|_2^2 \ge s_0 \lambda^2\}.
\end{equation*}
The definition of operator $\mathcal{T}^{(2)}_{\lambda, s_0}:\mathbb{R}^p \rightarrow \mathbb{R}^p$ is
\begin{align*}
  \{\mathcal{T}^{(2)}_{\lambda, s_0}(\beta)\}_{G_j} =
  \begin{cases}
    \beta_{G_j},\ &\text{if}\ j \in \mathcal{J}_{s_0}.\\
    0, &\text{if}\ j \in [m]\backslash\mathcal{J}_{s_0}.
  \end{cases}
  \end{align*}
  The operator $\mathcal{T}^{(2)}_{\lambda, s_0}$ selects groups with large magnitudes, utilizing group information to further filter the important variables. The operator $\mathcal{T}_{\lambda, s_0} = \mathcal{T}^{(2)}_{\lambda, s_0}\circ \mathcal{T}^{(1)}_{\lambda}$ is a composition of these two steps.
Unlike the classical IHT procedure, our procedure updates the threshold $\lambda$ in $\mathcal{T}_{\lambda, s_0}$ at each step in order to achieve both optimal statistical accuracy and fast convergence. Given $\lambda_0 > \lambda_{\infty} > 0$ and $0 < \kappa < 1$, we provide the form of the sequence $\{\lambda_t\}_{t=1}^{\infty}$ as follows
\begin{equation}\label{eq:iter2}
  \lambda_t = \kappa^t\lambda_0 \vee \lambda_{\infty},\ t = 0, 1, 2,\ldots
\end{equation}
For a given $s_0$ and sequence of threshold $\{\lambda_t\}_{t=1}^{\infty}$, we denote the estimators $\{\beta^t\}_{t=1}^{\infty}$ as

\begin{equation}\label{eq:iter1}
  \beta^t = \mathcal{T}_{\lambda_t, s_0}\left(\beta^{t-1} + \frac{1}{n}X^{\top} (y-X\beta^{t-1})\right),\ t = 1, 2, \ldots.
\end{equation}
Moreover, we denote the corresponding support set of $\{\beta^t\}_{t=1}^{\infty}$ as  $\{S^t\}_{t=1}^{\infty}$.
In the studies of variable selection, the misidentification of true support set $S^*$, i.e., $S^t \cap (S^*)^c$ is called type-I error,
and the omission of $S^*$, i.e., $(S^t)^c \cap S^*$ is called type-II error. We summarize our procedure as the following algorithm:

\begin{algorithm}[H]
  \caption{\label{alg:iht0}\textbf{D}ouble \textbf{S}parse \textbf{IHT} (DSIHT) algorithm with known $s, s_0$ and $\sigma$.}
  \begin{algorithmic}[1]
    \REQUIRE $X,\ y,\ \{G_j\}^m_{j=1},\ \kappa,\ \lambda_0, \ s_0,\ s,\ \sigma$.
    \STATE Initialize $t=0$, $\beta^t = 0$ and $\lambda_{\infty} = 4\sqrt{\frac{\sigma^2}{n}(\log \frac{ed}{s_0}+\frac{1}{s_0}\log \frac{em}{s})}$.
    \WHILE {$\lambda_t \geq \lambda_{\infty},\ $}
    \STATE ${\beta}^{t+1} = \mathcal{T}_{\lambda_t, s_0}\left({\beta}^{t} + \frac{1}{n}X^{\top}(y-X{\beta}^{t})\right)$.
    \STATE $\lambda_{t+1} = \kappa \lambda_{t}$.
    \STATE $t = t+1$.
    \ENDWHILE
    \ENSURE $\hat \beta = \beta^{t}$.
  \end{algorithmic}
\end{algorithm}
Here we offer an intuitive explanation for the choice of $\lambda_t$.
A large $\lambda_t$ promotes sparsity in the estimator $\beta^t$, which significantly reduces the type-I error by preventing spurious variables from being incorporated into the model.
However, excessive sparsity can result in a high type-II error by omitting too many true variables. As Section \ref{upperbound} shows, it leads to a high estimation error because the magnitude of $\beta^*$ is drastically shrunk to zero.
Conversely, a small $\lambda_t$ can reduce the type-II error by increasing the complexity of the model. Nevertheless, this allows too many spurious variables into the model, resulting in a high type-I error.
This intuition motivates us to choose the specific form of the sequence $\{\lambda_t\}_{t=1}^{\infty}$ by balancing these two types of errors.

In our procedure, we employ a decreasing sequence \eqref{eq:iter2} instead of directly setting the threshold as this order. The reason is that such a small threshold can potentially result in the selection of too many unimportant variables at the initial step.
This lack of sparsity makes our procedure hard to benefit from the contraction property of the DSRIP condition, and the estimation error cannot be well-controlled in iterations.
In comparison, a sufficiently large $\lambda_0$ identifies a small set of variables, effectively controlling the false discoveries of the initial solution. 
With the decrease of the threshold, we optimize the solution in an appropriate direction iteratively without losing sparsity. A novelty of our procedure lies in the fact that it implicitly controls the type-I error at a low level at each step, and reduces the type-II error through iterations.
In Theorem \ref{th1}, we choose $\lambda_{\infty}\asymp\sqrt{\frac{\sigma^2}{n}(\log \frac{ed}{s_0}+\frac{1}{s_0}\log \frac{em}{s})}$ and show its optimality in the minimax sense.

\subsection{Analysis of estimation error}\label{upperbound}
To conduct the theoretical analysis, we decompose the iterative term into three parts:
\begin{align}\label{eq:H}
  \begin{split}
  H^{t+1} \coloneqq&{\beta}^{t} + \frac{1}{n}X^{\top}(y-X{\beta}^t)\\
   =& \beta^*+\left(\frac{1}{n} X^{\top} X-\mathbb{I}_p\right)(\beta^* - {\beta}^t) +\frac{1}{n} X^{\top} \xi\\
   = & \beta^* + \Phi (\beta^*-\beta^t) + \Xi,
  \end{split}
  \end{align}
where 
$\Phi \coloneqq \frac{1}{n}X^{\top}X-\mathbb{I}_p\ \text{and}\ \Xi  \coloneqq \frac{1}{n}X^{\top}\xi .
$
Equation \eqref{eq:H} shows that the estimation error comes from three sources: 
\begin{itemize}
  \item The true parameters $\beta^*$ shrunk by mistake.
  \item The optimization error that $\beta^t$ approximates $\beta^*$.
  \item The randomness caused by the errors $\xi$.
\end{itemize}
Among these three sources, the optimization error corresponds to the iterative procedure,
and the randomness of our proposed procedure mainly comes from the third term $\Xi$.
In what follows, we detail how to upper bound the latter two sources of errors accurately.
Firstly, we introduce an essential condition for the design matrix $X$ in order to get a contraction of the optimization error. 

\begin{assumption}
[DSRIP condition]\label{df2}
We say that $X \in \mathbb R^{n\times p}$ satisfies the Double Sparse Restricted Isometry Property $DSRIP(s,s_0, \delta)$ with constant $0 < \delta < 1$, if $\forall S \in \mathcal S^{m,d}(s,s_0)$ and $\forall u \neq 0, u \in \mathbb{R}^{|S|}$, it holds that
$$
1-\delta  \leq \frac{\left\|X_{S} u\right\|_2^2}{n\|u\|_2^2} \leq 1+\delta.
$$
\end{assumption}

\begin{remark}
The Double Sparse Restricted Isometry Property (DSRIP) serves as a natural extension of the ordinary RIP condition \citep{candes2005} under the double sparse linear regression. For sub-Gaussian design, considering a $p$-dimensional $ss_0$-sparse structure, we require a sample size of $n = \Omega(ss_0\log \frac{ep}{ss_0})$ to ensure that the RIP condition holds with high probability. However, for the satisfaction of the DSRIP condition, we only need $n = \Omega(ss_0\log \frac{ed}{s_0}+s\log \frac{em}{s})$. It is worth noting that, given $p = m\times d$, the DSRIP condition can be satisfied with a smaller sample size compared to RIP. Further details can be found in Appendix C.
\end{remark}

DSRIP serves as an essential component for analyzing the high-dimensional double sparse linear regression \citep{li2022minimax}.
It imposes a less stringent condition than the ordinary RIP. Assuming the same element-wise sparsity, DSRIP only requires subsets of $ss_0$-sparse vectors with no more than $s$ groups to be satisfied, whereas RIP requires all $ss_0$-sparse vectors to hold.
If design matrix $X$ satisfies DSRIP($s, s_0, \delta$), we have $\|\Phi\|_2 \leq \delta < 1$, demonstrating that $\Phi$ serves as the contraction factor for all $(s, s_0)$-sparse vectors. As a result, by leveraging both DSRIP and the sparse structure of the signal, the contraction factor $\Phi$ enables iterative reduction of the optimization error.

Next, we turn to the analysis of the random error term $\Xi$.
To upper bound this source of error, we need to capture the complexity of the noise term.
\begin{lemma}\label{lemma:iht1}
  Assume that $X$ satisfies DSRIP$(s, s_0, \delta)$.
  Then, there exists a constant $C>0$, the event
  \begin{align*}
    \mathcal{E} \coloneqq \left\{\forall S\in \mathcal{S}^{m,d}(s,s_0):  \sum_{i\in S}\Xi_{i}^2 \le \frac{4\sigma^2}{n} \left(ss_0 \log \frac{ed}{s_0}+s\log \frac{em}{s} \right) \right\}
  \end{align*}
  holds with probability at least $1-\exp\left\{-C(ss_0 \log \frac{ed}{s_0}+s\log \frac{em}{s})\right\}$.
\end{lemma}
Lemma \ref{lemma:iht1} provides the uniform upper bounds of the random error term with high probability.
We now analyze the random term $\Xi$ in detail and decompose the source of random errors into two parts: 

\begin{itemize}
 \item The random errors $\Xi$ attached to the true support set $S^*$.
 \item The random errors $\Xi$ caused by type-I error, the mis-identification of true parameters $\beta^*$. More concretely, some random errors escape from operator $\mathcal{T}_{\lambda,s_0}$, which we call these errors as pure errors below.
\end{itemize}
The errors caused by random errors $\Xi$ can be attributed to two sources: the random errors corresponding to $S^*$ and $(S^*)^c$, respectively. Since $S^* \in \mathcal{S}^{m,d}(s,s_0)$ is with a sparse prior, the random errors attached to $S^*$ can be well-bounded by event $\mathcal{E}$ with high probability.
However, it is difficult to find an upper bound for the pure errors since the amount of the pure errors is undetermined.
Therefore, the central problem that operator $\mathcal{T}_{\lambda,s_0}$ addresses is to bound the support set of the pure errors.
Intuitively, we want to collect the pure errors in some subsets belonging to $\mathcal{S}^{m,d}(s,s_0)$.
Then, the magnitude of pure errors can be upper bounded by event $\mathcal{E}$.

We consider applying $\mathcal{T}_{\lambda,s_0}$ to the pure errors directly and show that if the pure errors overflow $\mathcal{S}^{m,d}(s,s_0)$, it will contradict with $\mathcal{E}$ with high probability.
According to the structure of $\mathcal{S}^{m,d}(s,s_0)$, we decompose the discussion into two cases:
\begin{itemize}
  \item[\bf{Case 1:}] Assume that the set selected by $\mathcal{T}_{\lambda,s_0}$ lies in no more than $s$ groups but the amount exceeds $ss_0$. 
  Element-wise condition checking ensures that all the selected entries are larger than $\lambda$. 
  Then, for any $(s, s_0)$-shaped subset of this set with cardinality $ss_0$,
  the total magnitude of these subsets exceeds $ss_0\lambda^2$. With the choice of $\lambda \geq 2\sqrt{\frac{\sigma^2}{n}(\log \frac{ed}{s_0}+\frac{1}{s_0}\log \frac{em}{s})}$, we have $ss_0\lambda^2\geq 4 \frac{\sigma^2}{n}(ss_0\log \frac{ed}{s_0}+s\log \frac{em}{s})$, which contradicts event $\mathcal{E}$ with high probability.
  We provide an illustrative example in Figure \ref{fig1}.
  \item[\bf{Case 2:}] Assume that the set selected by $\mathcal{T}_{\lambda,s_0}$ lies in more than $s$ groups, yet within any $s$ selected groups, the number of the selected entries does not exceed $ss_0$. 
  Group-wise condition checking implies that the magnitude of each selected group is larger than $s_0 \lambda^2$.
  Consequently, the $(s, s_0)$-shaped subset consisting of any $s$ selected groups satisfies that the total magnitude exceeds $ss_0\lambda^2$.
  For $\lambda \geq 2\sqrt{\frac{\sigma^2}{n}(\log \frac{ed}{s_0}+\frac{1}{s_0}\log \frac{em}{s})}$, it contradicts with event $\mathcal{E}$ with high probability.
  We provide an illustrative example in Figure \ref{fig2}.  
  Notably, if there exist $s$ selected groups with the number of selected entries exceeding $ss_0$, we analyze this case similarly to {\bf{Case 1}}.

\end{itemize}

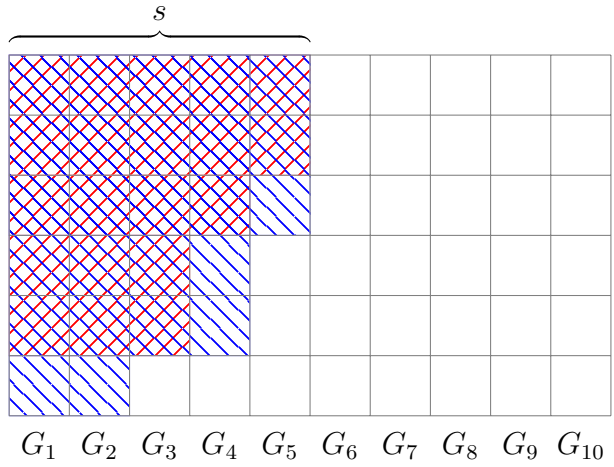
\begin{figure}
  \begin{center}
    \begin{tikzpicture}[scale = 0.8]
      \draw[color=red!40,
      pattern={mylines[size= 5pt,line width=.8pt,angle=45]},
      pattern color=red] (0,4) rectangle (5,6);
      \draw[color=red!40,
      pattern={mylines[size= 5pt,line width=.8pt,angle=45]},
      pattern color=red] (0,3) rectangle (4,4);
      \draw[color=red!40,
      pattern={mylines[size= 5pt,line width=.8pt,angle=45]},
      pattern color=red] (0,1) rectangle (3,3);
  \draw[color=blue!40,
      pattern={mylines[size= 5pt,line width=.8pt,angle=-45]},
      pattern color=blue] (0,1) rectangle (4,6);
      \draw[color=blue!40,
      pattern={mylines[size= 5pt,line width=.8pt,angle=-45]},
      pattern color=blue] (0,0) rectangle (2,1);
      \draw[color=blue!40,
      pattern={mylines[size= 5pt,line width=.8pt,angle=-45]},
      pattern color=blue] (0,3) rectangle (5,6);
    \node[] at (2.5,6.7) {$s$};
    \node[] at (0.5,-0.5) {$G_1$};
    \node[] at (1.5,-0.5) {$G_2$};
    \node[] at (2.5,-0.5) {$G_3$};
    \node[] at (3.5,-0.5) {$G_4$};
    \node[] at (4.5,-0.5) {$G_5$};
    \node[] at (5.5,-0.5) {$G_6$};
    \node[] at (6.5,-0.5) {$G_7$};
    \node[] at (7.5,-0.5) {$G_8$};
    \node[] at (8.5,-0.5) {$G_9$};
    \node[] at (9.5,-0.5) {$G_{10}$};
    \draw[step=1,color=gray] (0,0) grid (10,6);
    \draw [thick, decorate, 
		decoration = {calligraphic brace, 
			raise=5pt, 
			aspect=0.5, 
			amplitude=4pt 
		}] (0,6) --  (5,6);
    \end{tikzpicture} 
    \end{center}
    \caption{Illustrative example of {\bf{case 1}}. There are 10 groups with equal group size $d=6$, and we reshape the group structure as a $6\times 10$ matrix with each column representing a group.
    Here $s=5$ and $s_0 = 4$.
    The blue region represents the selected set, 
    and the red region represents a $(s, s_0)$-shaped subset satisfying that total magnitude exceeds $ss_0 \lambda^2$.
    Here the cardinality of the red-colored set is $s \times s_0=20$. Note that the whole vector of support is reshaped into a matrix with a particular group structure. 
  }\label{fig1}
  \end{figure}

  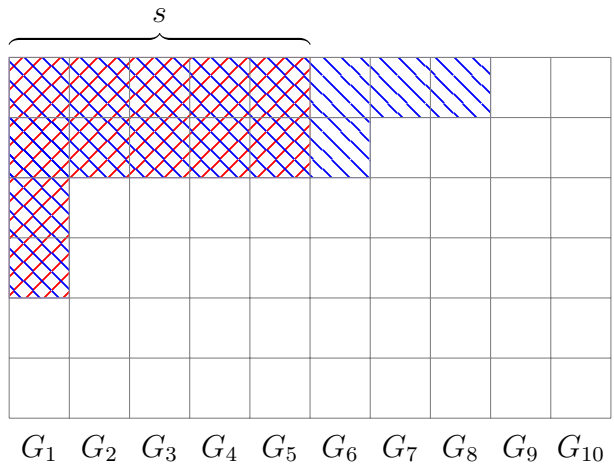
\begin{figure}
    \begin{center}
      \begin{tikzpicture}[scale = 0.8]
        \draw[color=red!40,
        pattern={mylines[size= 5pt,line width=.8pt,angle=45]},
        pattern color=red] (0,4) rectangle (5,6);
        \draw[color=red!40,
        pattern={mylines[size= 5pt,line width=.8pt,angle=45]},
        pattern color=red] (0,2) rectangle (1,4);
    \draw[color=blue!40,
        pattern={mylines[size= 5pt,line width=.8pt,angle=-45]},
        pattern color=blue] (0,4) rectangle (6,5);
        \draw[color=blue!40,
        pattern={mylines[size= 5pt,line width=.8pt,angle=-45]},
        pattern color=blue] (0,5) rectangle (8,6);
    \draw[color=blue!40,
        pattern={mylines[size= 5pt,line width=.8pt,angle=-45]},
        pattern color=blue] (0,2) rectangle (1,5);
      \node[] at (0.5,-0.5) {$G_1$};
      \node[] at (1.5,-0.5) {$G_2$};
      \node[] at (2.5,-0.5) {$G_3$};
      \node[] at (3.5,-0.5) {$G_4$};
      \node[] at (4.5,-0.5) {$G_5$};
      \node[] at (5.5,-0.5) {$G_6$};
      \node[] at (6.5,-0.5) {$G_7$};
      \node[] at (7.5,-0.5) {$G_8$};
      \node[] at (8.5,-0.5) {$G_9$};
      \node[] at (9.5,-0.5) {$G_{10}$};
      \draw[step=1,color=gray] (0,0) grid (10,6);
      \node[] at (2.5,6.7) {$s$};
      \draw [thick, decorate, 
      decoration = {calligraphic brace, 
        raise=5pt, 
        aspect=0.5, 
        amplitude=4pt 
      }] (0,6) --  (5,6);
      \end{tikzpicture} 
      \end{center}
      \caption{Illustrative example of {\bf{case 2}}.
      The elements in Figure \ref{fig2} are the same as in Figure \ref{fig1}.
      The entries of the red region cover $s=5$ groups and its cardinality is less than $s\times s_0=20$.
      }\label{fig2}
    \end{figure}
Overall, by applying operator $\mathcal{T}_{\lambda, s_0}$ directly, $\Xi$ can be shrunk into a $(s, s_0)$-shaped subset with high probability.

\subsection{Upper bound for estimation error}\label{mainresult}
In Section \ref{upperbound}, we have introduced the idea to control the estimation error caused by optimization error and randomness.
Formally speaking, the three sources of estimation error can be bounded in sequence. 
In what follows, we analyze the error bounds of our proposed procedure.
The main result of our theoretical analysis is given by Theorem \ref{th1}.

\begin{theorem}\label{th1}
  Assume that $\beta^*$ is $(s,s_0)$-sparse and $X$ satisfies DSRIP$(3s, \frac53 s_0,\delta )$. 
  Assume that $\delta<0.11\wedge \kappa^{10}$, $\|\beta^* \|_2 \le \sqrt{ss_0}\lambda_0$ and
  $
  \lambda_{\infty} \ge 4\sqrt{\frac{\sigma^2}{n}(\log \frac{ed}{s_0}+\frac{1}{s_0}\log \frac{em}{s})}.
  $
  We run Algorithm \ref{alg:iht0} and obtain the corresponding solution sequence $\{\beta^{t}\},t=1,2,\cdots$.
 Then, with probability at least  $1-\exp\left\{-C(ss_0\log\frac{ed}{s_0}+s\log \frac{em}{s})\right\}$, we have
 \begin{itemize}
  \item [(\romannumeral1)] Inside groups $G^*$, the type-I error can be controlled by a $(s, s_0)$-shaped subset, that is, 
  \begin{equation}\label{eq:iht3}
    S_{G^*} \cap S^t \cap (S^*)^c  \in \mathcal{S}^{m,d}(s,s_0).
  \end{equation}
  \item [(\romannumeral2)] Outside groups $G^*$, the type-I error can be controlled by a $(s, s_0)$-shaped subset, that is, 
  \begin{equation}\label{eq:iht5}
    S_{G^*}^c \cap S^t  \in \mathcal{S}^{m,d}(s,s_0).
  \end{equation}
  \item [(\romannumeral3)] The upper bounds for estimation error are
  \begin{equation}\label{eq:iht4}
    \|\beta^* - \beta^t \|_2 \le \frac{3}{2}(1+\sqrt{2})\sqrt{ss_0}\lambda_t.
  \end{equation}
\end{itemize}
\end{theorem}
Part (\romannumeral1) of Theorem \ref{th1} shows that the type-I error of $\{\beta^t\}$ within the true groups $G^*$ can be controlled in a $(s, s_0)$-shaped set.
Part (\romannumeral2) of Theorem \ref{th1} asserts that our procedure selects fewer than $s$ incorrect groups into the model, and at most $ss_0$ variables outside groups $G^*$.
Together, they show that the solution sequence $\{\beta^t\}$ generated by our procedure is $(2s, \frac{3}{2}s_0)$-sparse at each step, affirming that our procedure effectively controls false discoveries at both the element and group levels.
The non-convexity of the IHT-style method may cause the parameter estimation error to not decrease at each step. To address this issue, a common approach to get around this issue is constructing a surrogate function of the upper bound that decreases exponentially \citep{zhang2018, Zhu202014241, zhang2022}.
With the choice of $\{\lambda_t\}$, \eqref{eq:iht4} gives a decreasing upper bound for the parameter estimation error.
Notably, with the choice of $\lambda_{\infty} \asymp\sqrt{\frac{\sigma^2}{n}(\log \frac{ed}{s_0}+\frac{1}{s_0}\log \frac{em}{s})}$, the upper bound decays geometrically to the minimax lower bounds in \eqref{lower_bound}, which demonstrates the optimality of our procedure in the minimax sense.
\begin{remark}
  In the above discussion, we have discussed the idea of the construction of $\mathcal{T}_{\lambda, s_0}$ by applying it to $\Xi$ directly.
  In our practical procedure, we apply $\mathcal{T}_{\lambda, s_0}$ to $H^t$ rather than $\Xi$.
  Referring to the two cases above, we can show that
  \begin{itemize}
    \item [(i)] Inside the true groups $G^*$, if $S^t \cap (S^*)^c \notin \mathcal{S}^{m,d}(s,s_0)$, there exists a $(s,s_0)$-shaped subset $\tilde{S}_{1,t} \subseteq S^t\cap S_{G^*}\cap (S^*)^c$ such that $ss_0 \lambda_{t+1}^2 \le \sum_{i \in \tilde{S}_{1, t}}\{\mathcal{T}_{\lambda_{t+1,s_0}}(H^{t+1})\}_{i}^2$.
    \item [(ii)] Outside the true groups $G^*$, if $S^t \cap (S^*)^c \notin \mathcal{S}^{m,d}(s,s_0)$, there exists a $(s,s_0)$-shaped subset $\tilde{S}_{2,t} \subseteq S^t\cap S_{G^*}^c$ such that $ss_0 \lambda_{t+1}^2 \le \sum_{i \in \tilde{S}_{2, t}}\{\mathcal{T}_{\lambda_{t+1,s_0}}(H^{t+1})\}_{i}^2$.
  \end{itemize}
 Notably, our proof mainly relies on the method of mathematical induction. Assuming the results \eqref{eq:iht3},\eqref{eq:iht5},\eqref{eq:iht4} in Theorem \ref{th1} hold for step $t$, we first prove that \eqref{eq:iht3} and \eqref{eq:iht5} hold for step $t+1$ by induction hypothesis. 
  We then combine the induction hypothesis with \eqref{eq:iht3} and \eqref{eq:iht5} for step $t+1$ to establish \eqref{eq:iht4}, completing the inductive steps.
\end{remark}

\begin{remark}
Here we elaborate on why we split the analysis of false discovery into two cases.
Subsequently, we present an example demonstrating that in the false discovery $S^t \cap (S^*)^c$, there does not exist a subset $\tilde S_t$ satisfying  $\tilde{S}_{t} \subseteq S_{G^*}^c$ such that $ss_0 \lambda_{t+1}^2 \le \sum_{i \in \tilde{S}_{t}}\{\mathcal{T}_{\lambda_{t+1,s_0}}(H^{t+1})\}_{i}^2$.
  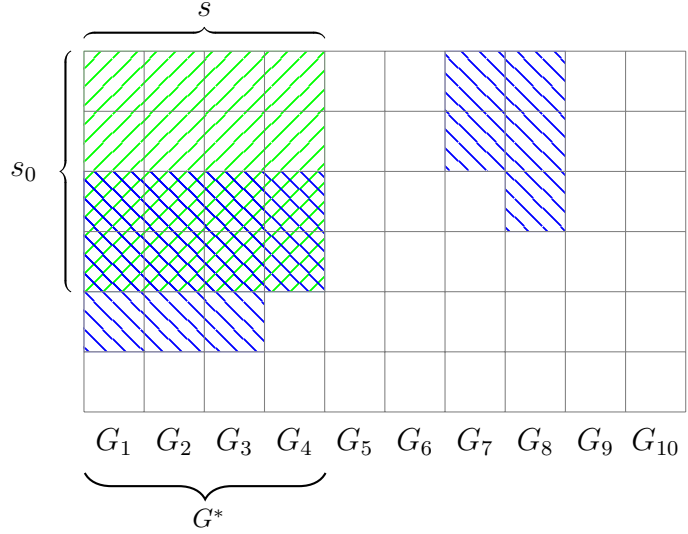
\begin{figure}
    \begin{center}
      \begin{tikzpicture}[scale = 0.8]
        \draw[color=green!40,
        pattern={mylines[size= 5pt,line width=.8pt,angle=45]},
        pattern color=green] (0,2) rectangle (4,6);
    \draw[color=blue!40,
        pattern={mylines[size= 5pt,line width=.8pt,angle=-45]},
        pattern color=blue] (0,1) rectangle (3,4);
    \draw[color=blue!40,
        pattern={mylines[size= 5pt,line width=.8pt,angle=-45]},
        pattern color=blue] (0,2) rectangle (4,4);
    \draw[color=blue!40,
        pattern={mylines[size= 5pt,line width=.8pt,angle=-45]},
        pattern color=blue] (6,4) rectangle (7,6);\
    \draw[color=blue!40,
        pattern={mylines[size= 5pt,line width=.8pt,angle=-45]},
        pattern color=blue] (7,3) rectangle (8,6);
      \node[] at (0.5,-0.5) {$G_1$};
      \node[] at (1.5,-0.5) {$G_2$};
      \node[] at (2.5,-0.5) {$G_3$};
      \node[] at (3.5,-0.5) {$G_4$};
      \node[] at (4.5,-0.5) {$G_5$};
      \node[] at (5.5,-0.5) {$G_6$};
      \node[] at (6.5,-0.5) {$G_7$};
      \node[] at (7.5,-0.5) {$G_8$};
      \node[] at (8.5,-0.5) {$G_9$};
      \node[] at (9.5,-0.5) {$G_{10}$};
      \node[] at (2,6.7) {$s$};
      \draw [thick, decorate, 
      decoration = {calligraphic brace, 
        raise=5pt, 
        aspect=0.5, 
        amplitude=4pt 
      }] (0,6) --  (4,6);
      \node[] at (-1,4) {$s_0$};
      \draw [thick, decorate, 
      decoration = {calligraphic brace, 
        raise=5pt, 
        aspect=0.5, 
        amplitude=4pt 
      }] (0,2) --  (0,6);
      \draw[step=1,color=gray] (0,0) grid (10,6);
      \draw [thick, decorate, decoration={brace,amplitude=10pt,mirror},xshift=0.4pt,yshift=-0.4pt](0,-1) -- (4,-1) node[black,midway,xshift = 0.05cm, yshift=-0.6cm] {\footnotesize $G^*$};
      \end{tikzpicture} 
      \end{center}
      \caption{Illustrative example of two cases of false discovery. Here $G^* = \{G_1, G_2, G_3, G_4\}$ and $s=s_0=4$.
      The green region represents the true support set $S^*$ and the blue region represents the selected set $S^t$.
      The remaining elements in Figure \ref{fig3} are the same as in Figure \ref{fig1}.
      }\label{fig3}
    \end{figure}

    In Figure \ref{fig3}, it is easy to verify that $S^{t} \cap (S^*)^c$ has 8 entries and $S^{t} \cap (S^*)^c \notin \mathcal{S}^{m,d}(s, s_0)$ since it covers $5$ groups.
    By the group-wise condition checking, $\|\{\mathcal{T}_{\lambda_{t+1,s_0}}(H^{t+1})\}_{G_i}\|_2^2 \geq s_0 \lambda_{t+1}^2$ for $i = 7, 8$.
    On the other hand, inside $G^*$, the absolute value of each element of $S^{t} \cap (S^*)^c$ is not less than $\lambda_{t+1}$.
    However, we cannot find a $(s, s_0)$-shaped subset such that $ss_0 \lambda_{t+1}^2 \le \sum_{i \in \tilde{S}_{t}}\{\mathcal{T}_{\lambda_{t+1,s_0}}(H^{t+1})\}_{i}^2$.
    Therefore, we consider covering the false discovery inside $G^*$ and outside $G^*$ by two $(s, s_0)$-shaped subsets, respectively.
\end{remark}

\subsection{Minimax lower bound for double sparse linear regression}\label{lowerbound}

In previous works, minimax rates for the high-dimensional sparse linear regression have been studied thoroughly. A number of papers focus on element-wise $s$-sparsity class \citep{raskutti2011minimax, verzelen2012minimax, bellec2018slope}, and there is also some work devoted to group sparsity such as \citet{10.1214/09-AOS778} and \citet{tsy2011}. 
Recently, \citet{cai2019sparse} provided the non-asymptotic minimax lower bounds of double sparse linear regression.
Here we prove it using a more concise technique. 
Consider parameter space $\widetilde\Theta^{m,d}(s, s_0)$:
 $$
 \widetilde\Theta^{m,d}(s, s_0) \coloneqq \{\beta \in \mathbb{R}^p :\|\beta\|_{0,2}\le s\ \text{and}\ \|\beta_{G_j}\|_0\le s_0,\forall j\in [m]\}.
 $$ 
 Unlike $\Theta^{m,d}(s, s_0)$, $\widetilde\Theta^{m,d}(s, s_0)$ imposes an $\ell_0$-ball constraint on each group with a radius of $s_0$. Additionally, the total sparsity of $\widetilde\Theta^{m,d}(s, s_0)$ is limited to $ss_0$. It can be easily observed that $\widetilde\Theta^{m,d}(s, s_0) \subseteq \Theta^{m,d}(s, s_0)$.
 Therefore,
 \begin{equation*}\label{lower_relation}
 \inf_{\hat\beta}\sup_{\beta^* \in \Theta^{m,d}(s, s_0)}\mathbf{E}_{\hat \beta}\|\hat\beta-\beta^*\|_2^2 \geq \inf_{\hat\beta}\sup_{\beta^* \in \widetilde\Theta^{m,d}(s, s_0)}\mathbf{E}_{\hat \beta} \|\hat\beta-\beta^*\|_2^2,
\end{equation*}
 where $\mathbf E_{\hat\beta}$ represents the expectation with respect to $\hat \beta$.
\begin{definition}[Packing Number] 
A $\rho$-packing of a set $\mathcal{S}$ with repsect to a metric $\|\cdot\|_\psi$ is a collection $\left\{\beta^{1}, \ldots, \beta^{M}\right\} \subset \mathcal{S}$ such that $\|\beta^{i}- \beta^{j}\|_\psi>\rho$ for all distinct $i, j \in [M]$. The $\rho$-packing number $M(\delta ; \mathcal{S}, \|\cdot\|_\psi)$ is the cardinality of the largest $\rho$-packing.
\end{definition}
 Let $M(\rho;\widetilde\Theta^{m,d}(s, s_0), \|\cdot\|_H)$ be the cardinality of $\rho$-packing set of the parameter space $\widetilde\Theta^{m,d}(s, s_0)$ with repsect to Hamming metric $\|\cdot\|_H$.
 The lower bounds for the packing number of $ \widetilde\Theta^{m,d}(s, s_0)$ are provided as follows.
 \begin{lemma}[Lower bounds for the packing number \citep{li2022minimax}]\label{lem2}
	The cardinality of $ \frac{ss_0}{4}$-packing set of $\widetilde\Theta^{m,d}(s, s_0)$ is lower bounded as
	\begin{equation*}
		\log\left(M(\frac{ss_0}{4};\widetilde\Theta^{m,d}(s, s_0), \|\cdot\|_H)\right) \geq \frac{ss_0\log \frac{ed}{s_0}+s\log \frac{em}{s}}{4}.
	\end{equation*}
\end{lemma}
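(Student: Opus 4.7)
The plan is to construct an explicit packing set via a two-level Gilbert--Varshamov argument: first pack the group supports, then pack the within-group patterns for each fixed group support, and combine the two packings.

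\textbf{Two-level construction and Hamming decomposition.} I would identify each binary-valued $\beta \in \widetilde\Theta^{m,d}(s,s_0)$ with exactly $s$ active groups and exactly $s_0$ nonzero entries in each active group with a pair $(U, W)$, where $U \in \binom{[m]}{s}$ is the group support and $W = (W_j)_{j\in U}$ with each $W_j \in \binom{G_j}{s_0}$ gives the within-group patterns. Then a direct case analysis on whether each group lies in $U\triangle U'$ or $U \cap U'$ gives the Hamming decomposition
\begin{equation*}
\|\beta-\beta'\|_H = s_0\,|U \triangle U'| + 2 \sum_{j \in U \cap U'}\bigl(s_0 - |W_j \cap W'_j|\bigr).
\end{equation*}

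\textbf{Gilbert--Varshamov at each level.} At the group level, I would apply the constant-weight Gilbert--Varshamov bound on $\binom{[m]}{s}$: after bounding the ball volume $\sum_{j<s/4}\binom{s}{j}\binom{m-s}{j}$ by its maximum term, I obtain $\mathcal{U}\subseteq\binom{[m]}{s}$ with $|U\triangle U'|\geq s/2$ for distinct $U,U'\in\mathcal{U}$ and $\log|\mathcal{U}|\geq \frac{s}{4}\log(em/s)$. At the within-group level, for any fixed $U\in\mathcal{U}$, I would view the concatenated tuple $W=(W_j)_{j\in U}$ as a $\{0,1\}^{sd}$ vector structured as $s$ weight-$s_0$ blocks, and apply the same argument to this product set to build $\mathcal{W}_U$ with pairwise Hamming distance $\geq ss_0/2$ and $\log|\mathcal{W}_U|\geq \frac{ss_0}{4}\log(ed/s_0)$. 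By symmetry $|\mathcal{W}_U|$ is independent of the choice of $U$.

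\textbf{Combining and verifying the packing.} Let $\mathcal{P} = \{(U,W): U\in\mathcal{U},\ W\in\mathcal{W}_U\}$ and identify each element with its support indicator in $\{0,1\}^p\subseteq\widetilde\Theta^{m,d}(s,s_0)$. For two distinct elements of $\mathcal{P}$, the Hamming decomposition gives $\|\beta-\beta'\|_H\geq ss_0/2$ in both cases: when $U=U'$ it reduces to $\|W-W'\|_H\geq ss_0/2$ by construction of $\mathcal{W}_U$, and when $U\neq U'$ it is bounded below by $s_0|U\triangle U'|\geq ss_0/2$. Hence $\mathcal{P}$ is a valid $(ss_0/4)$-packing, and
\begin{equation*}
\log|\mathcal{P}| \geq \frac{s}{4}\log\frac{em}{s} + \frac{ss_0}{4}\log\frac{ed}{s_0} = \frac{ss_0\Delta}{4}.
\end{equation*}

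\textbf{Main obstacle.} The principal technical step is verifying the Gilbert--Varshamov ball-volume estimates with the explicit constant $1/4$. One needs a bound of the form $\sum_{j<s/4}\binom{s}{j}\binom{m-s}{j}\leq (em/s)^{3s/4}$, and its within-group analogue; this relies on estimating the sum by its largest term and using $\binom{s}{s/4}\leq 2^s$ together with $\binom{m-s}{s/4}\leq (4e(m-s)/s)^{s/4}$, with a mild regime restriction on $m/s$ and $d/s_0$ absorbed into the $e$ factors appearing in $\Delta$. The within-group step has the same shape but must be handled over the product structure of $s$ blocks, yielding an extra factor that is cleanly dominated by the leading exponent. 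Once both volume bounds are secured, the rest of the argument reduces to the straightforward combinatorial bookkeeping above.
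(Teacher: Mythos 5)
The paper does not actually prove Lemma~\ref{lem2}; it imports it verbatim from \citet{li2022minimax}, so there is no in-paper argument to compare against and your proposal must stand on its own. Its architecture --- pack the group supports, pack the within-group patterns, combine them via the Hamming decomposition $\|\beta-\beta'\|_H = s_0|U\triangle U'| + 2\sum_{j\in U\cap U'}(s_0-|W_j\cap W'_j|)$ --- is the natural route and is essentially the construction that the cited reference packages as a single multi-ary Gilbert--Varshamov bound over the alphabet of within-group patterns. The decomposition and the combination step are correct.

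The gap is exactly where you place it, but it is not a ``mild regime restriction\ldots absorbed into the $e$ factors'': the constant $1/4$ does not come out of a binary two-level GV argument in general. Your own volume estimate gives $\log|\mathcal{U}| \geq \log\binom{m}{s} - s\log 2 - \tfrac{s}{4}\log\tfrac{4e(m-s)}{s} - \log\tfrac{s}{4}$, and matching the target $\tfrac{s}{4}\log\tfrac{em}{s}$ forces roughly $\tfrac{s}{2}\log\tfrac{m}{s} \gtrsim s\bigl(\log 2 + \tfrac14\log(4e) + \tfrac14\bigr)$, i.e.\ $m \gtrsim 20s$; at $m=2s$ the GV guarantee is about $e^{0.26s}$ against a target of $e^{0.42s}$, so the group level alone is short by a constant factor in the exponent, and the within-group level suffers the same loss when $d/s_0$ is small. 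The failure is unrecoverable in the boundary case $d=s_0$: there $\mathcal{W}_U$ is a singleton and contributes nothing, yet the target still contains the additive term $\tfrac{ss_0}{4}\log\tfrac{ed}{s_0}=\tfrac{ss_0}{4}$ coming from the $e$ inside the logarithm --- those $e$'s are part of what must be proved, not slack in which to hide losses. Indeed, when $m=s$ and $d=s_0$ the claim asks for roughly $e^{N/4}$ binary points at Hamming distance $N/4$ in $\{0,1\}^N$ with $N=ss_0$, which exceeds the binary GV guarantee $e^{(\log 2 - H(1/4))N}\approx e^{0.13N}$ and sits in the open region between the GV and linear-programming bounds; no elementary volume argument over $\{0,1\}$ will produce it. What rescues the constant in \citet{li2022minimax} is precisely the \emph{multi-ary} device (a nontrivial alphabet, not just the support combinatorics), which your construction omits. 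As written, your argument yields the bound only with an unspecified absolute constant in place of $1/4$ and only under nontrivial lower bounds on $m/s$ and $d/s_0$ that the lemma does not assume.
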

\citet{li2022minimax} leveraged the structures of double sparsity and combined multi-ary Gilbert-Varshamov bounds \citep{gilbert1952comparison} to construct the packing set of $\widetilde\Theta^{m,d}(s, s_0)$ in a more concise way.
By combining Lemma \ref{lem2}, we establish a minimax lower bound that is consistent with the results presented in \citet{cai2019sparse}. This is stated in the following theorem.
\begin{theorem}\label{thm:lower}
  Consider linear regression model $y = X\beta^* + \varepsilon$, where $\varepsilon \sim \mathcal{N}(0,\sigma^2\mathrm{I}_n)$. 
  Denote the maximal $(2s, 2s_0)$-sparse eigenvalue as
$$
\vartheta_{\max} = \max_{u \in \Theta^{m,d}(2s, 2s_0)}\frac{\|Xu\|_2}{\sqrt{n}\|u\|_2}.
$$
  Assume that $\vartheta_{\max} < \infty$.
  Then, we have
\begin{equation}\label{lower_bound}
	 \inf_{\hat\beta}\sup_{\beta^* \in \Theta^{m,d}(s, s_0)} {\mathbf{E}}_{\hat \beta } \|\hat\beta-\beta^*\|_2^2 \geq \frac{\sigma^2}{512 \vartheta_{\max}^2n}\left(ss_0\log \frac{ed}{s_0}+s\log \frac{em}{s}\right).
\end{equation}
\end{theorem}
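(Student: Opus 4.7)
The plan is to apply a Fano-type information-theoretic argument built on the binary packing supplied by Lemma~\ref{lem2}. Since $\widetilde\Theta^{m,d}(s,s_0)\subseteq\Theta^{m,d}(s,s_0)$, it suffices to prove the lower bound over $\widetilde\Theta^{m,d}(s,s_0)$. By Lemma~\ref{lem2} one obtains a family $\{\omega^{(1)},\ldots,\omega^{(M)}\}\subset\{0,1\}^p\cap\widetilde\Theta^{m,d}(s,s_0)$ with pairwise Hamming distances at least $ss_0/4$ and $\log M\ge ss_0\Delta/4$. I would then rescale by a parameter $\gamma>0$ to be chosen, setting $\beta^{(j)}=\gamma\omega^{(j)}$; each $\beta^{(j)}$ lives in $\widetilde\Theta^{m,d}(s,s_0)$ and the Hamming packing translates to an $\ell_2$ packing with separation at least $\gamma^2 ss_0/4$.

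Next I would bound the pairwise Kullback--Leibler divergences between the corresponding Gaussian observation models. Since $\beta^{(j)}-\beta^{(k)}\in\Theta^{m,d}(2s,2s_0)$, the assumption on $\vartheta_{\max}$ yields
\begin{equation*}
\mathrm{KL}\bigl(P_{\beta^{(j)}},P_{\beta^{(k)}}\bigr)=\frac{\|X(\beta^{(j)}-\beta^{(k)})\|^2}{2\sigma^2}\le \frac{n\vartheta_{\max}^2\|\beta^{(j)}-\beta^{(k)}\|^2}{2\sigma^2}\le\frac{n\vartheta_{\max}^2\gamma^2 ss_0}{\sigma^2},
\end{equation*}
where the last inequality uses $\|\beta^{(j)}-\beta^{(k)}\|^2\le 2\gamma^2 ss_0$. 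I would then tune $\gamma$ so that the maximum pairwise KL divergence is, say, at most $\log M/8$; concretely the choice $\gamma^2=\sigma^2\Delta/(32\,n\vartheta_{\max}^2)$ achieves this thanks to $\log M\ge ss_0\Delta/4$.

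With the packing and KL bound in hand, the standard Fano (or local Fano/Birg\'e) inequality delivers
\begin{equation*}
\inf_{\hat\beta}\max_{1\le j\le M}\mathbf{P}_{\beta^{(j)}}\!\left(\|\hat\beta-\beta^{(j)}\|^2\ge \frac{\gamma^2 ss_0}{16}\right)\ge 1-\frac{\max_{j,k}\mathrm{KL}(P_j,P_k)+\log 2}{\log M}\ge\frac12,
\end{equation*}
once $ss_0\Delta$ is large enough to dominate the $\log 2$ term (the remaining regime is handled by trivially tuning the constant in the final bound). Converting this high-probability statement into the in-expectation bound via Markov's inequality and plugging in the chosen $\gamma^2$ yields
\begin{equation*}
\inf_{\hat\beta}\sup_{\beta^*\in\Theta^{m,d}(s,s_0)}\mathbf{E}\|\hat\beta-\beta^*\|^2\ge\frac{\gamma^2 ss_0}{32}=\frac{\sigma^2 ss_0\Delta}{256\,\vartheta_{\max}^2 n},
\end{equation*}
which matches \eqref{lower_bound}.

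The conceptual heavy lifting has been absorbed into Lemma~\ref{lem2}, so the main technical obstacle is the careful constant accounting: choosing $\gamma$ so that simultaneously (i) the rescaled family remains inside $\widetilde\Theta^{m,d}(s,s_0)$, (ii) the KL budget sits below a fixed fraction of $\log M$ using the $(2s,2s_0)$-sparse eigenvalue $\vartheta_{\max}^2$, and (iii) the $\ell_2$ packing radius inherits the full factor $ss_0\Delta$. Verifying that $\beta^{(j)}-\beta^{(k)}$ indeed lies in the $(2s,2s_0)$-sparse cone (not a larger one, which would spoil the use of $\vartheta_{\max}$) is the one subtle point that I expect to require the most care, and it follows from the subspace containment property of $\mathcal{S}^{m,d}(\cdot,\cdot)$ mentioned in the Notations.
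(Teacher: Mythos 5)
Your proposal follows essentially the same route as the paper: reduce to $\widetilde\Theta^{m,d}(s,s_0)$, rescale the binary packing supplied by Lemma~\ref{lem2}, bound the pairwise Kullback--Leibler divergences through $\vartheta_{\max}$, and apply (generalized) Fano's inequality with a tuned scale parameter before converting to an in-expectation bound. The only flaw is a constant-tracking slip at the end: with your choice $\gamma^2=\sigma^2\Delta/(32\,n\vartheta_{\max}^2)$ the quantity $\gamma^2 ss_0/32$ equals $\sigma^2 ss_0\Delta/(1024\,n\vartheta_{\max}^2)$ rather than the claimed $\sigma^2 ss_0\Delta/(256\,n\vartheta_{\max}^2)$, so the budget constants must be retuned (the paper instead picks the scale directly as a function of $\log M$, namely $\delta=\tfrac12\sqrt{\sigma^2\log M/(4n\vartheta_{\max}^2 ss_0)}$) to recover the stated $1/256$.
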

Theorem \ref{thm:lower} establishes the lower bounds for the $\ell_2$ estimation errors, which are consistent with the results in \citet{cai2019sparse}.
 The estimation error for $\beta^{t_{\infty}}$ matches the minimax lower bound \eqref{lower_bound}, demonstrating the optimality of our IHT-style procedure. 

\section{A fully adaptive optimal procedure}\label{adaptive}
The procedure proposed in Section \ref{analysis} relies on the unknown sparsity levels $s, s_0$, and noise level $\sigma$, which pose a challenge in practical applications. To address this, we adopt a data-driven approach to determine the initial threshold and the optimal stopping time of our procedure, making it more feasible for real-world settings.
Given $s_0$, we introduce a procedure that is adaptive to the unknown $s$ and $\sigma$ in Section \ref{adaptives}.
In Section \ref{tradeoff}, we explore the trade-off between classical IHT and group IHT with respect to different values of $s_0$. Finally, we propose a data-adaptive tuning approach for $s_0$ and demonstrate its optimality, rendering our method a fully adaptive procedure.

\subsection{Adaptation to unknown $s$ and $\sigma$}\label{adaptives}
In the remaining part of Section \ref{adaptives}, we assume that sparsity level $s_0$ is given.
Firstly, we introduce the adaptive choice of the initial threshold $\lambda_0$.
The assumption of Theorem \ref{th1} provides a lower bound for the choice of $\lambda_0$. 
However, choosing a significantly large value of $\lambda_0$ may decrease the efficiency of the algorithm from an optimization perspective since it can result in more redundant iterations.
In the rest of our paper, denote
$$
M \coloneqq \frac{1}{n}X^{\top} y = \beta^* + \Phi \beta^* + \Xi\quad \text{and}\quad {\sigma}_t^2 \coloneqq \frac{1}{n}\|y - X{\beta}^t\|_2^2.
$$
We provide an explicit form of $\lambda_0$ as 
\begin{equation}\label{iht5}
 \lambda_0 \coloneqq \frac{100}{9}\sqrt{\frac{\sigma_0^2}{n} \left(\log\frac{ed}{s_0}+\frac{1}{s_0}\log em \right) } \vee \frac{19}{4}\|M\|_{\infty},
\end{equation}
where $\|M\|_{\infty} \coloneqq \max\limits_{i} |M_i|$. 

\begin{theorem}\label{lem:lam0}
  Assume that $\beta^*$ is $(s,s_0)$-sparse and $X$ satisfies $\mbox{DSRIP}(2s, \frac{3}{2}s_0, \delta )$. 
  Assume that $\delta<0.11 $ and  $n > 105^2(ss_0\log \frac{ed}{s_0}+s\log em)$. 
  Then, with probability at least $1-\exp\{-C(ss_0\log \frac{ed}{s_0}+s\log \frac{em}{s})\}$, we have $\|\beta^*\|_2 \leq ss_0\lambda_0$.
\end{theorem}
Theorem \ref{lem:lam0} states that the choice of \eqref{iht5} guarantees the satisfaction of the assumption in Theorem \ref{th1} with high probability. Next, we define three stopping times $t_{\infty}, t_0$ and $\bar t$ as follows
		\begin{align}
      \begin{split}
				&t_\infty \coloneqq \inf\left\{t: \lambda_t \le 4\sqrt{\frac{\sigma^2}{n}\left(\log \frac{ed}{s_0}+\frac{1}{s_0}\log \frac{em}{s}\right)}\right\},\\
				&t_0 \coloneqq \inf\left\{t: \lambda_t \le 12\sqrt{\frac{\sigma^2}{n}\left(\log \frac{ed}{s_0}+\frac{1}{s_0}\log em\right)}\right\},\\
				&\bar{t} \coloneqq \inf\left\{t: \lambda_t \le 8\sqrt{\frac{\sigma_t^2}{n}\left(\log \frac{ed}{s_0}+\frac{1}{s_0}\log em\right)}\right\}.
      \end{split}
      \end{align}
$t_\infty$ is the stopping time that hits the optimal threshold $\lambda_{\infty}$.
Obviously, $\bar t$ is an accessible stopping time that is independent of $s$ and $\sigma$. On the other hand, $t_0$ and $t_{\infty}$ are the theoretical stopping time that corresponds to the unknown parameters $s$ and $\sigma$.
We state the relationship among these three stopping times in the following theorem.
\begin{theorem}\label{thm:stopping}
Assume all the conditions in Theorem \ref{th1} hold and sample size $n > 105^2(ss_0\log \frac{ed}{s_0}+s\log em)$. Then, with probability at least  $1-\exp\left\{-C(ss_0\log \frac{ed}{s_0}+s\log \frac{em}{s})\right\}$, we have
$$t_0\leq \bar t \leq t_{\infty}.$$
\end{theorem}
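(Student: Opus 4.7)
The plan is to reduce each of the two inequalities to a comparison of the thresholds defining $t_0$, $\bar t$, and $t_\infty$, and then to bound $\sigma_t$ two-sidedly by $\sigma$ using Theorem~\ref{th1}. Since $\lambda_t$ is non-increasing and $\Delta\leq\Delta'$ (because $\Delta'-\Delta=s_0^{-1}\log s\geq 0$), the only obstacle to ordering the three infima is replacing $\sigma$ by $\sigma_t$. Writing $y-X\beta^t=\xi-X(\beta^t-\beta^*)$ and using the triangle inequality together with Theorem~\ref{th1} --- which guarantees $\beta^t-\beta^*\in\mathcal{S}^{m,d}(2s,\tfrac{3}{2} s_0)$ and $\|\beta^t-\beta^*\|\leq C_1\sqrt{ss_0}\,\lambda_t$ for $C_1=\tfrac{3}{2}(1+\sqrt 2)$ --- the DSRIP bound of Lemma~\ref{lem:rip} gives
$$
\left|\sigma_t - n^{-1/2}\|\xi\|\right| \leq C_2\sqrt{ss_0}\,\lambda_t,\qquad C_2:=\sqrt{1+\delta}\,C_1.
$$
Standard $\chi^2$-type concentration for sub-Gaussian $\xi$ yields $n^{-1/2}\|\xi\|\in[(1-\epsilon)\sigma,(1+\epsilon)\sigma]$ for an arbitrarily small $\epsilon>0$, merged into the high-probability event of the theorem, so that
$$
(1-\epsilon)\sigma - C_2\sqrt{ss_0}\,\lambda_t \leq \sigma_t \leq (1+\epsilon)\sigma + C_2\sqrt{ss_0}\,\lambda_t.
$$

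For $\bar t\leq t_\infty$, I would evaluate this at $t=t_\infty-1$, where $\lambda_{t_\infty-1}\leq 4\sigma\sqrt{\Delta/n}$. The hypothesis $n>100^2 ss_0\Delta'\geq 100^2 ss_0\Delta$ gives $C_2\sqrt{ss_0}\,\lambda_{t_\infty-1}\leq 4C_2\sigma\sqrt{ss_0\Delta/n} < \sigma/4$, which forces $\sigma_{t_\infty-1}>\sigma/2$ once $\epsilon$ is small. Combining with $\Delta\leq\Delta'$,
$$
\lambda_{t_\infty-1}\leq 4\sigma\sqrt{\Delta/n}\leq 4\sigma\sqrt{\Delta'/n}< 8\sigma_{t_\infty-1}\sqrt{\Delta'/n},
$$
so the stopping criterion for $\bar t$ is satisfied no later than $t=t_\infty-1$, hence $\bar t\leq t_\infty$.

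For $t_0\leq\bar t$, I would show that the $\bar t$-criterion fails for every $t\leq t_0-2$. Such $t$ satisfies $\lambda_t>12\sigma\sqrt{\Delta'/n}$ by the definition of $t_0$, i.e.\ $\sigma\sqrt{\Delta'/n}<\lambda_t/12$. Together with the upper bound on $\sigma_t$ and $\sqrt{ss_0\Delta'/n}<1/100$,
$$
8\sigma_t\sqrt{\Delta'/n} < \frac{2(1+\epsilon)}{3}\lambda_t + \frac{8C_2}{100}\lambda_t;
$$
for $\epsilon$ small and $\delta<\frac{3-2\sqrt 2}{9}$ (giving $C_2<4$) the coefficient in front of $\lambda_t$ is strictly less than $1$, so $\lambda_t>8\sigma_t\sqrt{\Delta'/n}$ as required. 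The main obstacle is precisely this constant bookkeeping: the upper bound on $\sigma_t$ carries a feedback term proportional to $\lambda_t$, and the argument closes only because $n>100^2 ss_0\Delta'$ makes $\tfrac{8C_2}{100}$ small enough to fit alongside the $\tfrac{2}{3}$ produced by the constant $12$ in the $t_0$-threshold; this is why the constants $4$, $8$, $12$ in the three stopping-time definitions are calibrated as they are.
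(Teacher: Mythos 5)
Your proposal is correct and follows essentially the same route as the paper: both reduce the stopping-time comparison to a two-sided bound $|\sigma_t-\sigma|\lesssim \sqrt{1+\delta}\,\|\beta^t-\beta^*\|+\epsilon\sigma$ (the paper's Lemma~\ref{lem:sigma}), feed in the estimation bound $\|\beta^t-\beta^*\|\le \tfrac{3}{2}(1+\sqrt2)\sqrt{ss_0}\,\lambda_t$ from Theorem~\ref{th1}, and close the constant bookkeeping using $n>100^2ss_0\Delta'$ exactly as you describe. The only cosmetic difference is that for $\bar t\le t_\infty$ you evaluate the criterion directly at $t_\infty-1$, whereas the paper lower-bounds $\sigma_t\ge\sigma/2$ on the whole range $t_0\le t\le t_\infty$; the two are interchangeable.
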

Theorem \ref{thm:stopping} shows that $\bar t$ can be bounded by the theoretical stopping times $t_0$ and $t_{\infty}$. 
In particular, since $\bar t$ is dominated by the optimal stopping time $t_{\infty}$, the estimation error $\|\beta^{\bar t} - \beta^*\|$ can be upper bounded by \eqref{eq:iht4}.
Additionally, Theorem \ref{th1} implies that $\beta^{t_0}$ is sub-optimal in the minimax sense.
More concretely, we can deduce that $\beta^{\bar t}$ achieves optimal statistical accuracy up to a logarithmic factor. 
We state this minimax sub-optimal result as Corollary \ref{col1}.

\begin{corollary}\label{col1}
Assume the conditions in Lemma \ref{thm:stopping} hold. Then, we have
$$
\sup\limits_{S^* \in \mathcal{S}^{m, d}(s, s_0)} P\left(\|\beta^{\bar t} - \beta^*\|_2 \geq 50\sqrt{\frac{\sigma^2}{n} \left(ss_0\log \frac{ed}{s_0}+s\log em\right)}\right)\leq e^{-C(ss_0\log \frac{ed}{s_0}+s\log \frac{em}{s})}.
$$
\end{corollary}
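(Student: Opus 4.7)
The plan is to chain together the two main results already available, namely Theorem~\ref{th1}\,(iii) and Theorem~\ref{thm:stopping}, and then exploit the defining inequality of the stopping time $t_0$. First I would work on the high-probability event (call it $\mathcal{A}$) on which both Theorem~\ref{th1} and Theorem~\ref{thm:stopping} hold simultaneously. Both results use the same event $\mathcal{E}$ from Lemma~\ref{lemma:iht1}, so no union bound is required and the failure probability stays at $e^{-Css_0\Delta}$.

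On $\mathcal{A}$, Theorem~\ref{thm:stopping} gives the sandwich $t_0 \le \bar t \le t_\infty$. I would first use the right-hand inequality $\bar t \le t_\infty$ only to certify that $\bar t$ is a finite, legitimate iteration index to which Theorem~\ref{th1}\,(iii) can be applied; this yields
\begin{equation*}
\|\beta^{\bar t} - \beta^*\| \;\le\; \tfrac{3}{2}(1+\sqrt{2})\sqrt{ss_0}\,\lambda_{\bar t}.
\end{equation*}
Next I would use the left-hand inequality $t_0 \le \bar t$ together with the fact that the sequence $\{\lambda_t\}$ defined in \eqref{eq:iter2} is non-increasing in $t$. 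This gives $\lambda_{\bar t} \le \lambda_{t_0}$. By the definition of $t_0$ (and the ``$+1$'' offset, which is exactly what makes $\lambda_{t_0}$ lie on the ``good side'' of the threshold), one has $\lambda_{t_0} \le 12\sigma\sqrt{\Delta'/n}$.

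Chaining these three inequalities produces
\begin{equation*}
\|\beta^{\bar t} - \beta^*\| \;\le\; \tfrac{3}{2}(1+\sqrt{2})\sqrt{ss_0}\cdot 12\sigma\sqrt{\tfrac{\Delta'}{n}} \;=\; 18(1+\sqrt{2})\,\sigma\sqrt{\tfrac{ss_0 \Delta'}{n}},
\end{equation*}
which is exactly the bound asserted. Taking the complement and the supremum over $S^*\in\mathcal{S}^{m,d}(s,s_0)$ (the bound is uniform on $\mathcal{A}$) gives the claimed tail probability.

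There is no genuine technical obstacle: the substantive work is packaged into Theorems~\ref{th1} and \ref{thm:stopping}, and the corollary is essentially an algebraic consequence. The only point that requires a little care is the monotonicity argument $\bar t \ge t_0 \Rightarrow \lambda_{\bar t} \le \lambda_{t_0}$ together with the ``$+1$'' convention in the definition of $t_0$; one needs to verify that this convention indeed places $\lambda_{t_0}$ strictly below the stated threshold $12\sigma\sqrt{\Delta'/n}$, which follows immediately from the non-increasing nature of $\{\lambda_t\}$.
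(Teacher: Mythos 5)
Your proof is correct and follows essentially the same route the paper intends: the paper declares Corollary~\ref{col1} a direct consequence of Theorem~\ref{thm:stopping}, and the bound you derive is exactly the chain used to obtain \eqref{stope4} in that theorem's proof (apply Theorem~\ref{th1}\,(iii), use $t_0\le\bar t\le t_\infty$ with monotonicity of $\{\lambda_t\}$, and bound $\lambda_{t_0}\le 12\sigma\sqrt{\Delta'/n}$ via the ``$+1$'' in the definition of $t_0$). The constants check out: $\tfrac{3}{2}(1+\sqrt{2})\cdot 12=18(1+\sqrt{2})$.
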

Corollary \ref{col1} is a direct consequence of Theorem \ref{thm:stopping}. It demonstrates that stopping at $\bar t$ is a minimax sub-optimal procedure.
The next open question is whether we can improve this sub-optimal procedure to be minimax optimal. 
The following analysis answers the question positively under certain conditions.
Denote
\begin{equation}\label{massart}
   \Omega(\beta) \coloneqq  s_0\|\beta\|_{G} \log \frac{ed}{s_0} + \|\beta\|_{G}\log \frac{em}{\|\beta\|_{G}}, 
\end{equation}
where $\|\beta\|_{G} \coloneqq \|\beta\|_{0,2} \vee \frac{\|\beta\|_0}{s_0}$.
We consider a variant of Birg\'{e}-Massart criterion \citep{birge2001gaussian} :
\begin{equation}\label{iht6}
	\tilde{t} = \arg\min_{t\in [T]\backslash[\bar t-1]}\left\{\frac{1}{n}\left\|y-X{\beta}^t\right\|_2^2 + \frac{1000{\sigma}_{\bar{t}}^2\Omega({\beta}^t)}{n}\right\},
\end{equation}
where
$
T \coloneqq \inf\{t: \lambda_t \le 4\frac{{\sigma}_{\bar{t}}}{\sqrt{n}}\}+1.
$
Here stopping time $T$ takes a value larger than $t_{\infty}$ to ensure a sufficiently large search domain.
Once the iterations hit the sub-optimal stopping time $\bar t$, we begin to select the optimal iteration according to \eqref{iht6}.
Now we are ready to present the detailed pseudocode of our adaptive proposed procedure in Algorithm \ref{alg:iht2}.

\begin{algorithm}[htbp]
\caption{\label{alg:iht2}\textbf{D}ouble \textbf{S}parse \textbf{IHT} (DSIHT) algorithm with known $s_0$}
  \begin{algorithmic}[1]
    \REQUIRE $X,\ y,\ \{G_j\}^m_{j=1},\ \kappa,\ s_0$.
    \STATE Initialize $t=0$, $\beta^0 = 0$ and $\lambda_0 = \frac{100}{9}\sqrt{\frac{\sigma_0^2}{n}(\log\frac{ed}{s_0}+\frac{1}{s_0}\log em)} \vee \frac{19}{4}\|M\|_{\infty}$.
    \WHILE {$\lambda_t \geq 8\sqrt{\frac{\sigma_t^2}{n}(\log\frac{ed}{s_0}+\frac{1}{s_0}\log em)},\ $}
    \STATE ${\beta}^{t+1} = \mathcal{T}_{\lambda_t, s_0}\left({\beta}^{t} + \frac{1}{n}X^{\top}(y-X{\beta}^{t})\right)$.
    \STATE $\lambda_{t+1} = \kappa\lambda_{t}$.
    \STATE $t = t+1$.
    \ENDWHILE
    \STATE Compute $\sigma^2_{\bar t} = \frac{1}{n}\|y-X\beta^t\|_2^2$.
    \WHILE {$\lambda_t \geq \frac{4 \sigma_{\bar t}}{\sqrt{n}},\ $}
    \STATE Compute $\text{C}_t = \frac{1}{n}\left\|y-X{\beta}^t\right\|_2^2 + \frac{1000{\sigma}_{\bar{t}}^2\Omega({\beta}^t)}{n}$.
    \STATE ${\beta}^{t+1} = \mathcal{T}_{\lambda_t, s_0}\left({\beta}^{t} + \frac{1}{n}X^{\top}(y-X{\beta}^{t})\right)$.
    \STATE $\lambda_{t+1} = \kappa \lambda_{t}$.
    \STATE $t = t+1$.
    \ENDWHILE
    \STATE $\tilde t = \mathop{\mathrm{argmin}}\limits_{t} \text{C}_t$.
    \ENSURE $\hat \beta = \beta^{\tilde{t}}$.
  \end{algorithmic}
\end{algorithm}

Algorithm \ref{alg:iht2} relies on the parameter $s_0$ and eliminates the dependence on the unknown values of $s$ and $\sigma$.
 The optimal results of stopping time $\tilde t$ are presented as follows.

\begin{theorem}\label{thm:optimal}
  Assume that $\beta^*$ is $(s,s_0)$-sparse and $X$ satisfies $\mbox{DSRIP}(5s, s_0, \delta )$. 
  Assume that $\delta<0.11\wedge \kappa^{10}$ and $n > 1000^2(ss_0\log \frac{ed}{s_0}+s\log em)$. Then, we have  $$
\sup\limits_{S^* \in \mathcal{S}^{m, d}(s, s_0)} P\left(\|\beta^{\tilde{t}} - \beta^*\|_2 \geq 150 \sqrt{\frac{\sigma^2}{n}\left(ss_0\log \frac{ed}{s_0}+s\log \frac{em}{s} \right)}\right)\leq e^{-C(ss_0\log \frac{ed}{s_0}+s\log \frac{em}{s})},
$$
and
$$
\sup\limits_{S^* \in \mathcal{S}^{m, d}(s, s_0)} P\left(\|\beta^{\tilde{t}}\|_G \geq 4s \right)\leq e^{-C(ss_0\log \frac{ed}{s_0}+s\log \frac{em}{s})}.
$$
\end{theorem}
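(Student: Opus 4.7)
The plan is to derive the claimed bounds from the optimality of $\tilde t$ in \eqref{iht6}, combined with a uniform concentration inequality for the Gaussian noise that is tailored to the complexity functional $\Omega$, and to close the argument with DSRIP. Since DSRIP$(5s,s_0,\delta/2)$ implies DSRIP$(2s,\tfrac{3}{2}s_0,\delta/2)$ via the nesting $\mathcal{S}^{m,d}(2s,\tfrac{3}{2}s_0)\subseteq \mathcal{S}^{m,d}(3s,s_0)\subseteq \mathcal{S}^{m,d}(5s,s_0)$, the hypotheses of Theorem~\ref{th1} hold, so every iterate $\beta^t$ is $(2s,\tfrac{3}{2}s_0)$-sparse on the good event. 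In particular $\|\beta^{\tilde t}\|_G=\max(\|\beta^{\tilde t}\|_{0,2},\|\beta^{\tilde t}\|_0/s_0)\leq\max(2s,3s)=3s\leq 4s$, which settles the second claim immediately.

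For the estimation bound, substitute $y=X\beta^*+\xi$ into the criterion and use the minimality of $\tilde t$ against $t_\infty$, which lies in the search range $[T]\setminus[\bar t-1]$ by Theorem~\ref{thm:stopping}. Expanding the squared residuals and cancelling the pure-noise term yields the basic inequality
\begin{align*}
\tfrac{1}{n}\|X(\beta^{\tilde t}-\beta^*)\|^2 &+ \tfrac{1000\sigma_{\bar t}^2}{n}\Omega(\beta^{\tilde t}) \leq \tfrac{1}{n}\|X(\beta^{t_\infty}-\beta^*)\|^2 \\
&\ + \tfrac{2}{n}\xi^\top X(\beta^{\tilde t}-\beta^{t_\infty}) + \tfrac{1000\sigma_{\bar t}^2}{n}\Omega(\beta^{t_\infty}).
\end{align*}
The benchmark terms are controlled by Theorem~\ref{th1}: $\|\beta^{t_\infty}-\beta^*\|\lesssim\sigma\sqrt{ss_0\Delta/n}$, so $\|X(\beta^{t_\infty}-\beta^*)\|^2/n\leq(1+\delta)\|\beta^{t_\infty}-\beta^*\|^2\lesssim\sigma^2 ss_0\Delta/n$ by DSRIP, and $\Omega(\beta^{t_\infty})\lesssim ss_0\Delta$ because $\beta^{t_\infty}$ is $(2s,\tfrac{3}{2}s_0)$-sparse. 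Using Corollary~\ref{col1} together with $n>1000^2 ss_0\Delta'$ and standard $\chi^2$-concentration of $\|\xi\|^2/n$, one further shows $\sigma^2/2\leq\sigma_{\bar t}^2\leq 2\sigma^2$ on a high-probability event, so $\sigma_{\bar t}^2$ may be treated as $\sigma^2$ up to constants.

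The central obstacle is the cross term $\xi^\top X(\beta^{\tilde t}-\beta^{t_\infty})$. Writing it as $\xi^\top X(\beta^{\tilde t}-\beta^*)-\xi^\top X(\beta^{t_\infty}-\beta^*)$ and bounding each piece via $|\xi^\top X\delta|\leq\|P_S\xi\|\cdot\|X\delta\|$, where $P_S$ is the orthogonal projection onto $\mathrm{Col}(X_S)$ and $S=\mathrm{supp}(\delta)\in\mathcal{S}^{m,d}(4s,s_0)$, the problem reduces to establishing a uniform bound of the form
$$\|P_S\xi\|^2\leq C\sigma^2\,\Omega(\beta)\quad \text{for all } \beta\in\mathcal{S}^{m,d}(4s,s_0)\text{ with }S=\mathrm{supp}(\beta),$$
holding with probability at least $1-\exp(-Css_0\Delta)$. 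The number of $(k,s_0)$-shape supports is at most $\binom{m}{k}\binom{d}{s_0}^{k}\leq\exp\{k\log(em/k)+ks_0\log(ed/s_0)\}$, and this expression is exactly $\exp(\Omega(\beta))$ when $\|\beta\|_G=k$. A chi-squared tail combined with a peeling argument over $k\in\{1,\ldots,4s\}$ therefore aligns the log-count precisely with $\Omega$; engineering the peeling so that every constant collapses into a single $C$ is the main technical hurdle.

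Finally, Young's inequality $2ab\leq b^2/2+2a^2$ turns the $\delta=\beta^{\tilde t}-\beta^*$ piece of the cross term into $\tfrac{1}{4n}\|X(\beta^{\tilde t}-\beta^*)\|^2+\tfrac{4}{n}\|P_S\xi\|^2$, the first of which is absorbed into the left-hand side while the second is controlled via the uniform lemma by $C\sigma^2\Omega(\beta^{\tilde t})/n+C\sigma^2 ss_0\Delta/n$, using $\Omega(\beta^{\tilde t}-\beta^*)\lesssim\Omega(\beta^{\tilde t})+\Omega(\beta^*)$. The $\beta^{t_\infty}$ piece is handled directly since $\Omega(\beta^{t_\infty})\lesssim ss_0\Delta$. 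The penalty constant $1000$ is chosen so that $1000\sigma_{\bar t}^2\geq 2C\sigma^2$ (using $\sigma_{\bar t}^2\geq\sigma^2/2$), allowing the residual $\Omega(\beta^{\tilde t})$ term on the right to be absorbed into the penalty on the left. What remains is $\tfrac{1}{2n}\|X(\beta^{\tilde t}-\beta^*)\|^2\leq C\sigma^2 ss_0\Delta/n$, and the lower DSRIP inequality converts this into $\|\beta^{\tilde t}-\beta^*\|^2\leq C\sigma^2 ss_0\Delta/n$, delivering the minimax-optimal rate at the stated probability level.
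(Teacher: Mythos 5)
There is a genuine gap, and it sits at the very first step. You dispose of the sparsity claim $\|\beta^{\tilde t}\|_G\leq 4s$ by invoking Theorem~\ref{th1} to say every iterate is $(2s,\tfrac{3}{2}s_0)$-sparse, hence $\|\beta^{\tilde t}\|_G\leq 3s$. But Theorem~\ref{th1}'s sparsity control is only valid while $\lambda_t\geq\lambda_\infty\geq 4\sigma\sqrt{\Delta/n}$: its contradiction argument hinges on the step $2\sigma\sqrt{ss_0\Delta/n}\leq\tfrac{1}{2}\sqrt{ss_0}\lambda_{t+1}$, which fails once the threshold drops below $4\sigma\sqrt{\Delta/n}$. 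The search range for $\tilde t$ is $[T]\setminus[\bar t-1]$ with $T=\inf\{t:\lambda_t\leq 4\sigma_{\bar t}/\sqrt{n}\}+1$, which extends far past $t_\infty$; for those later iterates the threshold is too small to suppress pure noise and the false discoveries can overflow any $(s,s_0)$-shape set. That is precisely why the paper proves $\|\beta^{\tilde t}\|_G\leq 4s$ by a separate contradiction through the information criterion: if $\|\beta^{\tilde t}\|_G>4s$ then $\Omega(\beta^{\tilde t})>4ss_0\Delta$, and combining the lower bound on $\|y-X\beta^{\tilde t}\|^2$ (via Lemma~\ref{lem:inner} and $\tfrac54\Omega(\beta^{\tilde t})\geq\Omega^*(\beta^{\tilde t})$ in this regime) with the upper bound on the criterion value at $t_\infty$ yields $3800\, ss_0\Delta<950\,\Omega(\beta^{\tilde t})\leq 3600\, ss_0\Delta$, a contradiction. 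This penalization argument is the missing idea, not an optional shortcut.

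The gap propagates into your estimation bound. Your cross-term control assumes $\mathrm{supp}(\beta^{\tilde t}-\beta^*)\in\mathcal{S}^{m,d}(4s,s_0)$ and peels only over $k\in\{1,\dots,4s\}$, both of which presuppose the sparsity bound you have not actually established. The paper avoids this circularity by making its noise lemma (Lemma~\ref{lem:inner}) uniform over \emph{all} excess support sizes $\hat s,\hat g$, with the complexity functional $\Omega^*(\beta)$ adapting to the realized support; the sparsity bound is then derived from the criterion, and only afterwards is the quadratic inequality in $\|X(\beta^{\tilde t}-\beta^*)\|$ solved and converted to an $\ell_2$ bound via DSRIP$(5s,s_0,\delta/2)$. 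The rest of your plan for the estimation error (basic inequality against $t_\infty$, benchmark control via Theorem~\ref{th1}, $\sigma_{\bar t}\asymp\sigma$, absorption into the penalty, DSRIP conversion) is essentially the paper's route, but it cannot be closed until the sparsity of $\beta^{\tilde t}$ is obtained by the criterion-based argument rather than cited from Theorem~\ref{th1}.
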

Theorem \ref{thm:optimal} establishes the upper bound for the estimation error of $\beta^{\tilde{t}}$, indicating that $\beta^{\tilde{t}}$ adaptively achieves the minimax optimal rate of convergence. Moreover, Theorem \ref{thm:optimal} demonstrates that our procedure can guarantee the sparsity of the estimator $\beta^{\tilde{t}}$ with high probability. Specifically, we can control the model size $\|\beta^{\tilde{t}}\|_0$ within the order of $O(ss_0)$ and the selected number of groups $\|\beta^{\tilde{t}}\|_{0,2}$ within the order of $O(s)$.

\begin{corollary}\label{cor:iter_num}
  Assume that all conditions in Theorem \ref{thm:optimal} hold. For the stopping time $T$ in (\ref{iht6}), we have
  $$
  \sup\limits_{S^* \in \mathcal{S}^{m, d}(s, s_0)} P\left(T \geq \log\left(6(\frac{\sqrt{n}\|\beta^*\|_2}{\sigma} \vee \sqrt{\log ep})\right)/\log(1/{\kappa})+1 \right)\leq e^{-C(ss_0\log \frac{ed}{s_0}+s\log \frac{em}{s})}.
  $$
\end{corollary}
Corollary \ref{cor:iter_num} guarantees that our IHT procedure achieves optimal statistical accuracy with linear convergence with high probability, demonstrating the efficiency of our proposed method.

\subsection{Adaptive trade-off between IHT and group-IHT}\label{tradeoff}
In this section, we investigate the problem of misspecification of 
$s_0$, which is typically unobservable in real-world applications. Let $\bar{s}_0$ be the input parameter in Algorithm \ref{alg:iht2}. Notably, given the sample $(X, y)$ and step size $\kappa$, estimator $\hat{\beta}$ is solely determined by $\bar{s}_0$ in Algorithm \ref{alg:iht2}. Therefore, we introduce the following statistical measures derived from Algorithm \ref{alg:iht2} with the given $\bar{s}_0$:

\begin{itemize}
	\item $\hat\beta(\bar{s}_0)$ denotes the estimator of Algorithm \ref{alg:iht2} given $\bar{s}_0$.
	\item $\hat{s}(\bar{s}_0)$ denotes the selected number of groups of $\hat\beta(\bar{s}_0)$.
	\item $\A$ denotes the number of nonzero entries of $\hat\beta(\bar{s}_0)$. 
\end{itemize}
By the definition of $(s,s_0)$-sparsity and parameter space $\mathcal{S}^{m,d}(s,s_0)$, we establish the relationship
\begin{equation*}\label{misspecified}
    \mathcal{S}^{m,d}(s,s_0) \subseteq 
    \begin{cases}
    	  \mathcal{S}^{m,d}(ss_0/\bar{s}_0,\bar{s}_0), & \bar{s}_0 \le s_0.\\
    		  \mathcal{S}^{m,d}(s,\bar{s}_0), & \bar{s}_0 > s_0.
    \end{cases}
\end{equation*}	
On one hand, when $\bar{s}_0 > s_0$ in Algorithm \ref{alg:iht2}, the design matrix $X$ satisfies $\text{DSRIP}(5s, \bar{s}_0, \delta)$, and $\beta^*$ is $(s, \bar{s}_0)$-sparse. Algorithm \ref{alg:iht2} can obtain a minimax optimal estimator concerning parameter space $\mathcal{S}^{m,d}(s, \bar{s}_0)$, preserving all the previous theoretical results from Theorem \ref{th1} to Corollary \ref{cor:iter_num}. On the other hand, given $\bar{s}_0 \le s_0$ in Algorithm \ref{alg:iht2}, if the design matrix $X$ satisfies $\text{DSRIP}(5ss_0/\bar{s}_0, \bar{s}_0, \delta)$, and $\beta^*$ is $(ss_0/\bar{s}_0, \bar{s}_0)$-sparse, Algorithm \ref{alg:iht2} can obtain a minimax optimal estimator with respect to parameter space $\mathcal{S}^{m,d}(ss_0/\bar{s}_0, \bar{s}_0)$, preserving all the previous theoretical results. We summarize these results in Table \ref{table:1}:

\begin{table}[htb]   
	\begin{center}   
		\caption{Properties for $s_0$-mis-specified models.}  
		\label{table:1} 
		\renewcommand{\arraystretch}{2}
		\begin{tabular}{ m{1.8cm}<{\centering} m{3.3cm}<{\centering} c m{3.3cm}<{\centering} }  
        \toprule
	\textbf{Value} & \textbf{Parameter space} & \textbf{Minimax Rate}& \textbf{Support Control} \\[0.3cm]   
		\midrule   $\bar{s}_0 < s_0$ & $\mathcal{S}^{m,d}(ss_0/\bar{s}_0,\bar{s}_0)$ & $\sqrt{\frac{\sigma^2}{n}\left(ss_0\log\frac{ed}{\bar{s}_0 }+\frac{ss_0}{\bar{s}_0}\log\frac{em\bar{s}_0}{ss_0}\right)}$ & $\A \lesssim ss_0$ \\  [0.3cm] 
   	$\bar{s}_0 = s_0$ & $\mathcal{S}^{m,d}(s,s_0)$ & $\sqrt{\frac{\sigma^2}{n}\left(ss_0\log\frac{ed}{s_0}+ s\log\frac{em}{s}\right)}$ & \thead{$\A \lesssim ss_0$,\\ $\s \lesssim s$} \\[0.3cm] 
		$\bar{s}_0 > s_0$ & $\mathcal{S}^{m,d}(s,\bar{s}_0)$ & $\sqrt{\frac{\sigma^2}{n}\left(s\bar{s}_0\log\frac{ed}{ \bar{s}_0}+s\log\frac{em}{s}\right)}$ &  $\s \lesssim s$ \\      
			\bottomrule  
		\end{tabular}   
	\end{center}   
\end{table}
Table \ref{table:1} indicates that the theoretical properties differ significantly between the cases $\bar s_0 > s_0$ and $\bar s_0 < s_0$. When $\bar{s}_0 <s_0$, the upper bound for estimation error is given as $\sqrt{\frac{\sigma^2}{n}\left(ss_0\log\frac{ed}{\bar{s}_0 }+\frac{ss_0}{\bar{s}_0}\log\frac{em\bar{s}_0}{ss_0}\right)}$, and model size can be controlled within an order of $O(ss_0)$.
In the case of $\bar{s}_0 > s_0$, the upper bound is $\sqrt{\frac{\sigma^2}{n}\left(ss_0\log\frac{ed}{s_0}+ s\log\frac{em}{s}\right)}$, and the selected groups can be controlled within an order of $O(s)$. Notably, whether $\bar{s}_0 < s_0$ or $\bar{s}_0 > s_0$, simultaneous control of sparsity at both the element and group levels is unattainable.

We illustrate the minimax rate with varying values of $s_0$ from 1 to $d$ in Figure \ref{fig:rate}. As depicted in Figure \ref{fig:rate}, when $1 \leq \bar{s}_0 <s_0$, the minimax rate tends to be an inversely proportional function. On the other hand, when $s_0 < \bar{s}_0 \leq d$, the minimax rate exhibits a trend of near-linear growth. Notably, for $\bar{s}_0 = s_0$, the minimax rate attains the minimum among these values.
\begin{figure}[htbp]
  \begin{center}
    \begin{tikzpicture}
      \centering
      \draw[<->](11.5,0)--(0,0)--(0,4.5);
      \draw[red,domain=0.7:4] plot(\x, {ln(5/\x)+1/\x*ln(7*\x)-0.3}) node at (3.7,3.2){$\frac{\sigma^2}{n}(ss_0\log\frac{ed}{\bar{s}_0 }+\frac{ss_0}{\bar{s}_0}\log\frac{em\bar{s}_0}{ss_0})$};
      \draw[blue,domain=4:10.3] plot(\x,{\x*ln(35/\x)-7.9}) node at (8,1.8){$\frac{\sigma^2}{n}(s\bar{s}_0\log\frac{ed}{ \bar{s}_0}+s\log\frac{em}{s})$};      
      \node[below] at (11.5,0) {\large $\bar s_0$};
      \node[right] at (-2.7,3.5) {\large Minimax rate};
      \node[below] at (4,0) {\large $s_0$};
      \node[below] at (0.7,0) {\large 1};
      \node[below] at (10.3,0) {\large $d$};
      \draw[dashed](0.7, 0)--(0.7, 4);
      \draw[dashed](4, 0)--(4, 0.8);
      \draw[dashed](10.3, 0)--(10.3, 4.7);
    \end{tikzpicture}
    \caption{Minimax rate with metric $\|\cdot\|_2^2$ for different parameter spaces.}\label{fig:rate}
  \end{center}
\end{figure}

\begin{remark}

Regardless of the value of $\bar{s}_0$, the above results provide the upper bound for estimation error and properties of sparsity control for Algorithm \ref{alg:iht2}.
In particular, when $\bar{s}_0=1$, the DSIHT algorithm reduces to the classical IHT algorithm \citep{ndaoud2020scaled}, and the results in Table \ref{table:1} recover the minimax rate $O(\sqrt{\frac{\sigma^2}{n}ss_0\log \frac{ep}{ss_0}})$ \citep{raskutti2011minimax}.
When $\bar{s}_0=d$, the results in Table \ref{table:1} recover the minimax rate of group sparsity, namely, $O(\sqrt{\frac{\sigma^2}{n}(sd+s\log \frac{em}{s})})$ \citep{tsy2011}.
Therefore, DSIHT can be viewed as the trade-off between IHT \citep{ndaoud2020scaled} and group IHT \citep{giraud2021introduction} determined by the parameter $s_0$.
\end{remark}

\subsection{Data-adaptive tuning for unknown $s_0$}\label{adaptives0}

Previous sections have introduced an adaptive procedure to address cases with unknown $s$ and $\sigma$. In this section, we focus on constructing an adaptive estimator that achieves minimax optimality without prior knowledge of $s_0$, further demonstrating that our method (cf. Algorithm \ref{alg:iht3}) is a fully adaptive algorithm.

Given a sequence $\{s_{0,l}\}_{l=1}^L$, an intuitive approach to determine the optimal choice involves treating $s_0$ as a tuning parameter. This entails running the DSIHT algorithm along the sequence and employing a model selection criterion to identify the optimal model size. Here, we utilize a variant of the Birg\'{e}-Massart criterion introduced by \citet{verzelen2012minimax}. This variant implicitly incorporates the knowledge of $\sigma^2$, rather than plugging in a same-order estimator of $\sigma$ as demonstrated in criterion \eqref{iht6}. 
Motivated by this, we propose a novel double sparse information criterion (DSIC) as follows, with $\hat A(\bar{s}_0)$ and $\hat s(\bar s_0)$ defined at the beginning of section \ref{tradeoff}:

\begin{equation}\label{verzelen}
    \text{DSIC}(\bar{s}_0) = \log \left(\frac{\|y-X\hat\beta(\bar{s}_0)\|_2^2}{n} \right)+\frac K n \left(\hat A(\bar{s}_0) \log ed + \hat s(\bar{ s_0})\log \frac{em}{\hat s(\bar{s}_0)}\right),
\end{equation}
where $K$ is a positive constant.
The estimator $\hat\beta^{\bar s_0}$ minimizing \eqref{verzelen} is the optimal solution of our procedure.
The algorithm is summarized as follows:
\begin{algorithm}[H]
\caption{\label{alg:iht3}\textbf{A}daptive \textbf{D}ouble \textbf{S}parse \textbf{IHT} (ADSIHT) algorithm}
  \begin{algorithmic}[1]
    \REQUIRE $X,\ y,\ \{G_j\}^m_{j=1}, \ \kappa,\ \{s_{0,l}\}_{l=1}^L$.
    \FOR {$l=1,\ldots, L$,}
    \STATE $\hat \beta^l$ = Algorithm 2($X, y, \{G_j\}^m_{j=1}, \kappa, s_{0,l}$).
    \STATE Compute the double sparse information criterion $\text{DSIC}(s_{0,l})$.
    \ENDFOR
    \STATE $l^* = \mathop{\mathrm{argmin}}\limits_{l \in [L]} \{\text{DSIC}(s_{0,l}) \}$.
    \ENSURE $\hat \beta = \hat\beta^{l^*}$.
  \end{algorithmic}
\end{algorithm}

\begin{remark}
As discussed in Section \ref{tradeoff}, achieving optimal statistical performance necessitates that $\bar s_0$ is of the same order as $s_0$. Following the approach of \citet{bellec2018slope}, we set the candidate values of $s_0$ as an exponential sequence $\{s_{0,l}\}_{l=1}^L=\left\{2^{\frac{l-1}{2}}, 1 \leq l \leq L \right \}$, where $L \coloneqq \max \left\{l \in \mathbb{N}: 2^{\frac{l-1}{2}} \leq d \right\}$. This setting ensures that the candidate set includes a value of the same order as $s_0$. Recall that \citet{cai2019sparse} introduced candidate sets for the unknown parameters $s$ and $s_0$,
  and employed a grid search technique for their tuning. In contrast, Algorithm \ref{alg:iht3} requires only a candidate set for $s_0$ with $O(\log d)$ elements, making it a much more computationally efficient tuning approach.
\end{remark}

Before presenting our theoretical results, we require some assumptions on the sample size and design matrix. First, we assume that there exists an interval $\mathcal{S}_0 := [s_{0,\min},s_{0,\max}]$ such that $s_0 \in \mathcal{S}_0$. 
\begin{assumption}[Sample size assumption]\label{samplesize}
 We assume that the sample size $n$ satisfies 
    $n \gtrsim \left\{\left( ss_{0} \log ed + \frac{ss_0}{s_{0,\min}} \log em \right) 
\vee \Big( ss_{0,\max} \log ed + s \log em \Big)\right\}$.\end{assumption}
Assumption \ref{samplesize} is a necessary technical assumption for the minimax adaptation with an unknown noise level $\sigma$ \citep{verzelen2012minimax, 10.1214/12-STS398}.
In addition, we require the DSRIP condition to satisfy each element of $\mathcal{S}_0$.
\begin{assumption}[Adaptive DSRIP condition]\label{dsrip2}
    We assume that the design matrix $X$ satisfies both DSRIP$( 5s, s_{0,\max}, \delta)$ and DSRIP$(5ss_0/s_{0,\min},s_{0,\min}, \delta)$.
\end{assumption}

\begin{remark}
    In particular, when $s_{0,\min}$ is relatively small, especially for $s_{0,\min}=1$, we observe that DSRIP$(5ss_0/s_{0,\min},s_{0,\min}, \delta)$ reduces to the classical RIP condition \citep{candes2005}. Conversely, when $s_{0,\max} = d$, DSRIP$( 5s, s_{0,\max}, \delta)$ becomes the group RIP condition \citep{eldar2009robust}. 
\end{remark}

Now we give the minimax adaptive result in the following theorem:

\begin{theorem}\label{adaptive2}
Assume that $\beta^*$ is $(s,s_0)$-sparse.
  Given interval $\mathcal{S}_0$,   assume that Assumption \ref{samplesize} and \ref{dsrip2} hold and $\delta<0.11\wedge \kappa^{10}$. 
  Let
  $
\hat{s}_0 = \arg\min_{\bar{s}_0 \in \mathcal{S}_0} \text{DSIC}(\bar{s}_0)
  $ with a sufficiently large $K$.
Then, with probability greater than $1-\exp\big\{ -C_1(ss_0 \log(ed/s_0) + s \log(em/s)) \big\}$, we have
\begin{equation}\label{eq:so}
\left\|\hat{\beta}(\hat{s}_0) - \beta^*\right\|_2 \le C_2\sigma \sqrt{\frac{ss_0\log ed+s\log(em/s) }{n}}.
\end{equation}
\end{theorem}
Theorem \ref{adaptive2} shows that our adaptive procedure, i.e., Algorithm \ref{alg:iht3}, is an optimal fully adaptive procedure. Importantly, Algorithm \ref{alg:iht3} obtains the minimax adaptive solution without the knowledge of $s$, $s_0$ and $\sigma$.

\begin{remark}
The significance of adapting to $s_0$ lies in achieving an optimal trade-off between classical IHT \citep{ndaoud2020scaled} and group IHT \citep{giraud2021introduction}. If both Assumptions \ref{samplesize} and \ref{dsrip2} are satisfied, this optimal trade-off can be attained. It is important to emphasize that when $s_0$ is unknown, simultaneous control of element-wise sparsity and group-wise sparsity is unattainable. Consequently, we derive near-optimal estimation error bounds for our adaptive estimator. Further details are provided in the proof of Theorem \ref{adaptive2}.

\end{remark}

\section{Oracle estimation rate with beta-min condition}\label{oracle}
As is well-known, the ordinary least-squares (OLS) estimator supported on the true support set $S^*$ can achieve the oracle estimation rate of $O(\sigma\sqrt{\frac {ss_0}{n}})$. In this section, under the beta-min condition, we demonstrate that the DSIHT algorithm can also attain the oracle estimation rate. This implies that the estimator obtained by DSIHT performs as well as the oracle OLS estimator. Furthermore, DSIHT exhibits almost full recovery \citep{butucea2018variable} of the true support set $S^*$ under the beta-min condition.

Denote \begin{equation}\label{eq:lambdae}
    \tilde\lambda_a : = a  \sqrt{ \frac{8\sigma^2}{n} \left(\log \frac{ed}{s_0}+\frac{1}{s_0}\log \frac{em}{s} \right)  },~ a>0.
\end{equation}
Given an initial estimator $\tilde\beta^0$, we update the estimator by using a fixed threshold $\tilde \lambda_2$ in the DSIHT operator $\mathcal T_{\tilde \lambda_{2},s_0}$. In specific, we update the coefficient by
\begin{equation}\label{eq:fixiter}
\tilde \beta^{t+1} = \mathcal{T}_{\tilde \lambda_{2},s_0}\left({\tilde\beta}^{t} + \frac{1}{n}X^{\top}(y-X{\tilde\beta}^{t})\right).
\end{equation}
Denote $\tilde S^t$ as  the support set of $\tilde \beta^t$.
The following theorem investigates the theoretical guarantees of the iteration procedure with a fixed threshold.

\begin{theorem}\label{T6}
Assume $\min\limits_{i \in S^*}|\beta^*_{i}| \ge (\sqrt{2} + \epsilon)\tilde \lambda_2$ and $\min\limits_{j \in G^*}\|\beta^*_{G_j}\|_2 \ge (\sqrt{2} + \epsilon) \sqrt{s_0} \tilde \lambda_2$ for any constant $\epsilon > 0$.     Assume that $X$ satisfies $DSRIP(3s, \frac53 s_0,\delta)$ and $\delta \le \epsilon^4 \wedge 0.05$.
Let $\tilde \beta^0$ be an initial estimator satisfying \eqref{eq:iht3}-\eqref{eq:iht4} in Theorem \ref{th1}.
We run \eqref{eq:fixiter} and obtain the corresponding solution sequence $\{\tilde \beta^{t}\}$.
Then, for $\forall t \ge 0 $,
as $\min\left\{\log \frac{ed}{s_0}+\frac{1}{s_0}\log \frac{em}{s},~ \frac{ss_0}{\log \frac{ed}{s_0}+\frac{1}{s_0}\log \frac{em}{s}}\right\} \to \infty$, 
with probability tending to $1$\footnote{In specific, when $\Delta := \frac{1}{s_0}\log(em/s) + \log(ed/s_0)$ is sufficiently large, this probability is greater than $1-C_1 \exp \left(-C_2  {ss_0}/{\Delta} \right)- C_3 \Delta^2 \exp\left(-C_4 \Delta \right)$. And the tail probability of Theorem \ref{T7} is the same case.},
we have
    \begin{itemize}
        \item[(\romannumeral1)] $S_{G^*}^c \cap \tilde S^t \in \mathcal S^{m,d}(s,s_0) $.
        
        \item[(\romannumeral2)] $S_{G^*} \cap (S^*)^c \cap \tilde S^t \in \mathcal S^{m,d} (s,s_0 ) $.

        \item[(\romannumeral3)] The upper bound for estimation error satisfies
        \begin{equation}\label{scaledoracle}
            \left\|\tilde \beta^{t} - \beta^* \right\|_2
            <~ 16\left(\frac34 \right)^{t} \sqrt{\frac{\sigma^2}{n} \left(ss_0\log \frac{ed}{s_0}+s\log \frac{em}{s}\right)} +   
                16  \sqrt{ \frac{\sigma^2 ss_0}{n} }.
        \end{equation}
    \end{itemize}
\end{theorem}
The fixed iteration procedure preserves the results of false discoveries control, as shown in Theorem \ref{T6}. 
Specifically, under the beta-min conditions, result \eqref{scaledoracle} indicates that the upper bound for estimation error can be decomposed into two components: a diminishing optimization error $ 16\left(\frac34 \right)^{t} \sqrt{\frac{\sigma^2}{n} (ss_0\log \frac{ed}{s_0}+s\log \frac{em}{s})}$ that approaches zero as $t \rightarrow \infty$, and a statistical error $16 \sqrt{ \frac{\sigma^2ss_0}{n} }$. When the optimization error becomes smaller than the statistical error, the term $O\left(\sqrt{\frac{\sigma^2ss_0}{n}} \right)$ dominates the estimation error.

As a consequence of Theorem \ref{T6}, for a sufficiently large $t$, the estimator $\tilde \beta^t$ can achieve the oracle estimation rate and almost recover the true support set at both the element and group levels. To clarify this property, we denote the  element-wise decoder $\eta^* \in \{0, 1\}^p$ as $\eta^*_{i} = \mathrm I (\beta^*_{i} \ne 0)$, and the group-wise decoder $\eta^*_G \in \{0, 1\}^m$  as $(\eta^*_G)_j = \mathrm I( \beta^*_{G_j} \ne \mathbf 0)$. For $\tilde\beta^t$, denote $\tilde\eta^t \in \{0, 1\}^p$ as $\tilde\eta^t_{i} = \mathrm I (\tilde\beta^t_{i} \ne 0)$, and the group-wise decoder $\tilde\eta^t_G \in \{0, 1\}^m$  as $(\tilde\eta^t_G)_j = \mathrm I( \tilde\beta^t_{G_j} \ne \mathbf 0)$.

\begin{theorem}\label{T7}
    Assume that all the conditions in Theorem \ref{T6} hold. For $\forall t > 2 \log\big(256 (\log \frac{ed}{s_0}+\frac{1}{s_0}\log \frac{em}{s}) \big) $, as $\min\left\{\log \frac{ed}{s_0}+\frac{1}{s_0}\log \frac{em}{s},~ \frac{ss_0}{\log \frac{ed}{s_0}+\frac{1}{s_0}\log \frac{em}{s}}\right\} \to \infty$, with a probability tending to 1, we have:
    \begin{itemize}
        \item[(\romannumeral1)] The estimator $\tilde \beta^t$ satisfies
        \begin{equation} \label{T7:eq1}
            \left\|\tilde \beta^{t} - \beta^* \right\|_2
            \le ~ 17\sigma \sqrt{ \frac{ss_0}{n} }.
        \end{equation}

        \item[(\romannumeral2)] The estimator $\tilde \beta^{t}$ achieves group-wise almost full recovery, that is,
        \begin{equation}\label{T7:eq2}
            \| \tilde \eta_{G}^{t} - \eta^*_{G} \|_0 = o\left(s\right).
        \end{equation}
        
        \item[(\romannumeral3)] The estimator $\tilde \beta^{t}$ achieves element-wise almost full recovery, that is,
        \begin{equation} \label{T7:eq3}
            \| \tilde \eta^{t} - \eta^* \|_0 = o\left(ss_0\right).
        \end{equation}
        
    \end{itemize}
\end{theorem}
Theorem \ref{T7} affirms that, for a sufficiently large number of iterations, $\tilde \beta^t$ achieves the oracle estimation rate. 
Crucially, \citet{bellec2018noise} demonstrated that convex estimators cannot achieve the oracle estimation rate even when the beta-min conditions are satisfied. This highlights the superiority of our DSIHT algorithm over sparse group Lasso.
Moreover, the beta-min conditions also ensure almost full recovery \citep{butucea2018variable} at both the element-wise and group-wise levels.
Specifically, we can control both type-I and type-II errors within the order of $o(ss_0)$ and $o(s)$ at the element-wise and group-wise levels, respectively. 

Similar to the approach in Table \ref{table:1}, when $\bar{s}_0 > s_0 $ or $\bar{s}_0 < s_0 $, we can utilize alternative parametric spaces, i.e., $\mathcal{S}^{m,d}(s,\bar{s}_0)$ or $ \mathcal{S}^{m,d}(ss_0/\bar{s}_0,\bar{s}_0)$, and obtain the corresponding oracle estimation rates. These outcomes are illustrated in Table \ref{table:2}.

\begin{table}[htb]   
	\begin{center}   
		\caption{Properties for $s_0$-mis-specified  models under the beta-min conditions.}  
		\label{table:2} 
		\renewcommand{\arraystretch}{2}
		\begin{tabular}{ m{1.4cm}<{\centering} m{2.5cm}<{\centering} c m{2.2cm}<{\centering} m{2.5cm}<{\centering} }
        \toprule
        \textbf{Value} & \textbf{Parameter space} &  \textbf{ Order of $\tilde \lambda_2$}  &  \textbf{Oracle Estimation Rate}& \textbf{Almost Full Recovery} \\[0.3cm] \midrule  
		 $\bar{s}_0 < s_0$ & $\mathcal{S}^{m,d}\left( \frac{ss_0}{\bar{s}_0},\bar{s}_0 \right)$ 
            & $ \sqrt{ \frac{\sigma^2}{n} \left( \log \frac{ed}{\bar{s}_0}+\frac{1}{\bar{s}_0}\log \frac{em\bar{s}_0}{ss_0} \right)  } $ & $\sqrt{\frac{\sigma^2}{n} ss_0}$ &  
            element-wise \\  [0.3cm]
   		    $\bar{s}_0 = s_0$ & $\mathcal{S}^{m,d}(s,s_0)$ & $  \sqrt{ \frac{\sigma^2}{n} \left(  \log \frac{ed}{ {s}_0} +\frac{1}{{s}_0}\log \frac{em}{s}\right)  } $ & $\sqrt{\frac{\sigma^2}{n} ss_0}$ &  
             element-wise and group-wise \\[0.3cm] 
		  $\bar{s}_0 > s_0$ & $\mathcal{S}^{m,d}(s,\bar{s}_0)$ &
   $ \sqrt{ \frac{\sigma^2}{n} \left(  \log \frac{ed}{\bar{s}_0} +\frac{1}{\bar{s}_0}\log \frac{em }{s }\right)  } $ &$\sqrt{\frac{\sigma^2}{n} s\bar{s}_0}$ & group-wise \\[0.3cm]      
			\bottomrule  
		\end{tabular}   
	\end{center}   
\end{table}
Table \ref{table:2} reveals that when $\bar s_0 \leq s_0$, the oracle estimation rate is $\sqrt{\frac{\sigma^2}{n} ss_0}$, showing insensitivity to the variations in $\bar s_0$. Conversely, for $\bar s_0 > s_0$, the oracle estimation rate increases to $\sqrt{\frac{\sigma^2}{n} s\bar s_0}$, further emphasizing the role of $s_0$ as a trade-off between IHT and group IHT as discussed in Section \ref{tradeoff}.

\begin{remark}

When $\bar s_0 = 1$, our results align with the assumptions and findings of element-wise IHT \citep{ndaoud2020scaled}. While both IHT and DSIHT attain the oracle estimation rate $O(\sqrt{\frac{\sigma^2}{n} ss_0})$ for $(s, s_0)$-sparse vectors with the beta-min conditions, Theorem \ref{T7} demonstrates that DSIHT not only achieves almost full recovery at the element level, as indicated by \eqref{T7:eq3}, but also at the group level, as indicated by \eqref{T7:eq2}. This underscores the superiority of DSIHT over IHT.
\end{remark}

\section{Numerical experiments}\label{numerical}
In this section, we present numerical experiments that shed light on the empirical performances of our proposals using both synthetic and real-world data sets.
Our algorithms are implemented in R package $\mathtt{ADSIHT}$. 
We compare against several state-of-the-art methods: sparse group Lasso (SGLasso, \citet{simon2013sparse}), which is fitted by R package $\mathtt{sparsegl}$ \citep{liang2022sparsegl}, group bridge (GBridge, \citet{huang2009group}), group exponential Lasso (GEL, \citet{breheny2015group}) and composite minimax concave penalty (CMCP, \citet{breheny2009penalized}), which are computed by R package $\mathtt{grpreg}$ \citep{breheny2015group}.
For SGLasso, we determine the tuning parameter by five-fold cross-validation. 
For the other comparison methods, we select the optimal solution using EBIC \citep{chen2008extended}.
For ADSIHT, we use our proposed DSIC with $K = 5$ to select the optimal model.
Moreover, we leave the remaining hyper-parameters to their default values in $\mathtt{sparsegl}$ and $\mathtt{grpreg}$.
All numerical experiments are conducted in R and executed on a personal laptop (AMD Ryzen 9 5900HX, 3.30 GHz, 16.00GB of RAM).

\subsection{Analysis on Synthetic Data}
Synthetic data sets are generated from the underlying model $y = X \beta^*+\xi$, where $\beta^* \in \mathbb{R}^p$ has $m$ groups with equal group size, namely, $p_1= \cdots =p_m=d$.
The design matrix $X$ is generated from a multivariate Gaussian distribution $\mathcal{MVN}(0, \Sigma)$.
The covariance matrix $\Sigma$ is considered as the auto-regressive structure, that is, $\Sigma_{ij} = 0.5^{|i-j|}$ for $1 \leq i,j\leq p$.
Next, the coefficients $\beta^*$ are generated under the following two scenarios:
\begin{itemize}
  \item Homogeneous signal: $\beta^*$ is randomly chosen from $\{1, -1\}$.
  \item Heterogeneous signal: $\beta^*$ is randomly chosen from $\mathcal{N}(0, 1)$.
\end{itemize}

Finally, the random error $\xi_i$ is generated independently from $N(0, \sigma^2)$, and $\sigma$ is chosen to achieve a desired signal-to-noise ratio (SNR).
All simulation results are based on 100 repetitions.
Given an output $(\hat S, \hat \beta)$,  
we use the following measures to assess the accuracy of variable selection and parameter estimation:
\begin{itemize}
  \item \textbf{Sparsity Error (SE)}: $|\hat S| - |S^*|$.
  \item \textbf{Group-wise Sparsity Error (GSE)}: $\|\hat\beta\|_{0,2} - \|\beta^*\|_{0,2}$.
  \item \textbf{Mathew's Correlation Coefficient (MCC)}:
  \begin{align*}
    \text{MCC} = \frac{\text{TP}\times \text{TN}-\text{FP}\times \text{FN}}{\sqrt{(\text{TP+FP)(TP+FN)(TN+FP)(TN+FN)}}},
  \end{align*}
  where TP= $\hat S \cap S^*$ and TN= $\hat S^c \cap (S^*)^c$ stand for true positives/negatives, respectively. FP= $\hat S \cap (S^*)^c$ and FN= $\hat S^c \cap S^*$ stand for false positives/negatives, respectively.
  \item \textbf{Estimation Error (EE)}: $\|\hat \beta - \beta^*\|_2$.
\end{itemize}
Here SE or GSE close to zero means better estimation results on the support set.
MCC ranges in $[-1, 1]$, and a larger MCC means a better variable selection performance.

\subsubsection{Statistical performance for varying SNR}
In this section, we study the effect of varying the SNR of model on the performance of ADSIHT and other state-of-the-art methods. 
We consider the generating model contains 50 nonzero coefficients, distributed evenly into 10 groups. We set sample size $n = 300$, group size $d = 10$, number of group $m=100$. The SNR increases from  to 20 with an increment equal to 2.
Figure \ref{plot1} shows the computational results of the homogeneous scenario and heterogeneous scenario in sub-figure {\bf{A}} and {\bf{B}}, respectively.

\begin{figure}[htbp]
  \centering
  \includegraphics[scale = 0.65]{./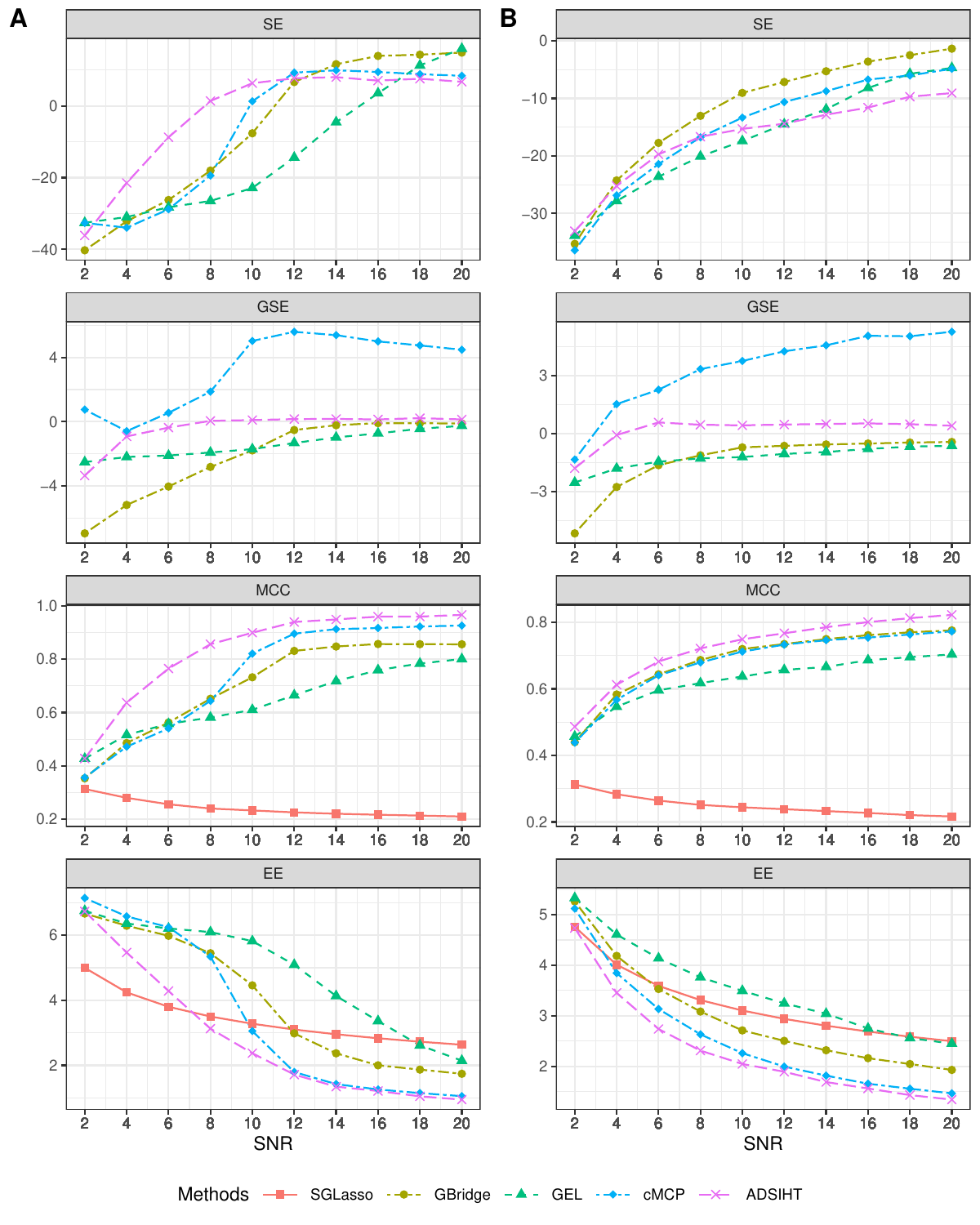}
  \caption{\label{plot1}Performance measures as the signal-to-noise ratio (SNR) increases from 1 to 10.
  (A) Computational results with homogeneous signal. 
  (B) Computational results with heterogeneous signal. 
}
\end{figure}

Figure \ref{plot1} shows that with the increase of SNR, all methods tend to perform better.
Our method exhibits excellent performances in terms of all measures across the whole SNR range.
For the homogeneous signal setup, our method is able to achieve full support recovery for high SNR.
On the other hand, although none of the considered methods can identify all the true variables accurately even for high SNR,
our method still shows its superiority in terms of variable selection and parameter estimation.

\subsubsection{Statistical performance for varying number of groups}
Here we study how the statistical metrics change with the number of groups.
We consider the generating model contains 50 nonzero coefficients, distributed evenly into 10 groups.
We set sample size $n=500$, group size $d = 10$ and SNR = 5.
The number of groups increases from 50 to 500 with an increment equal to 50.
We show the results in figure \ref{plot2}.

\begin{figure}[htbp]
  \centering
  \includegraphics[scale = 0.65]{./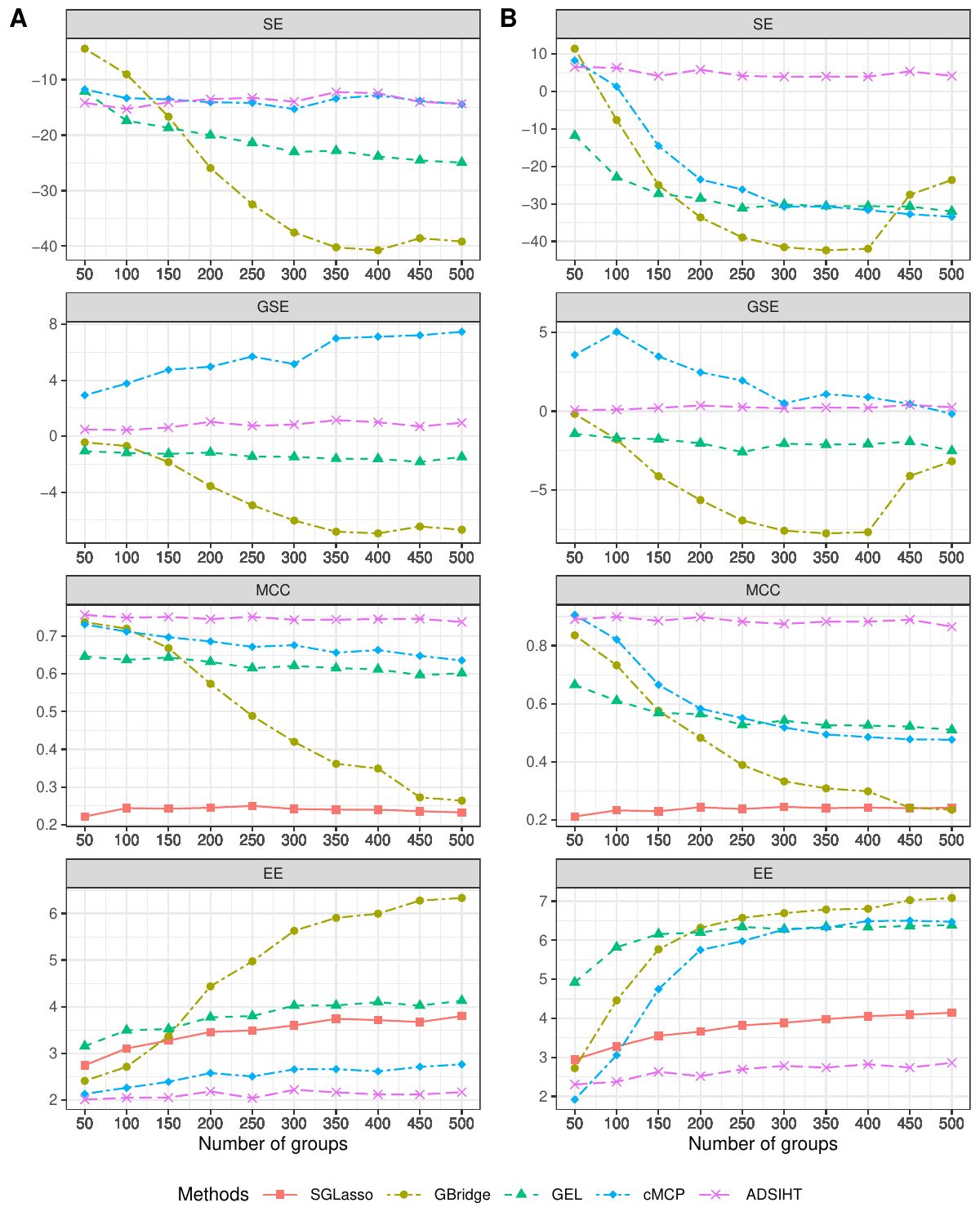}
  \caption{\label{plot2}Performance measures as the number of groups increases from 100 to 1200.
  (A) Computational results with homogeneous signal. 
  (B) Computational results with heterogeneous signal. 
}
\end{figure}

From Figure \ref{plot2}, we see that our method is more robust in the high-dimensional settings.
In terms of variable selection and parameter estimation, our method appears to outperform the other considered methods,
with the differences being most  pronounced in the high-dimensional settings.
As the number of groups increases, the performances of other methods, especially for GBridge, decrease significantly.

\subsubsection{Statistical performance for varying sample size}
Here we investigate the effect of varying the sample size on the performances while keeping the other parameters fixed. 
We consider the generating model contains 50 nonzero coefficients, distributed evenly into 5 groups.
We set group size $d = 20$, number of group $m=200$ and SNR$ = 5$.
The sample size increases from 300 to 1000 with an increment equal to 100.

\begin{figure}[htbp]
  \centering
  \includegraphics[scale = 0.65]{./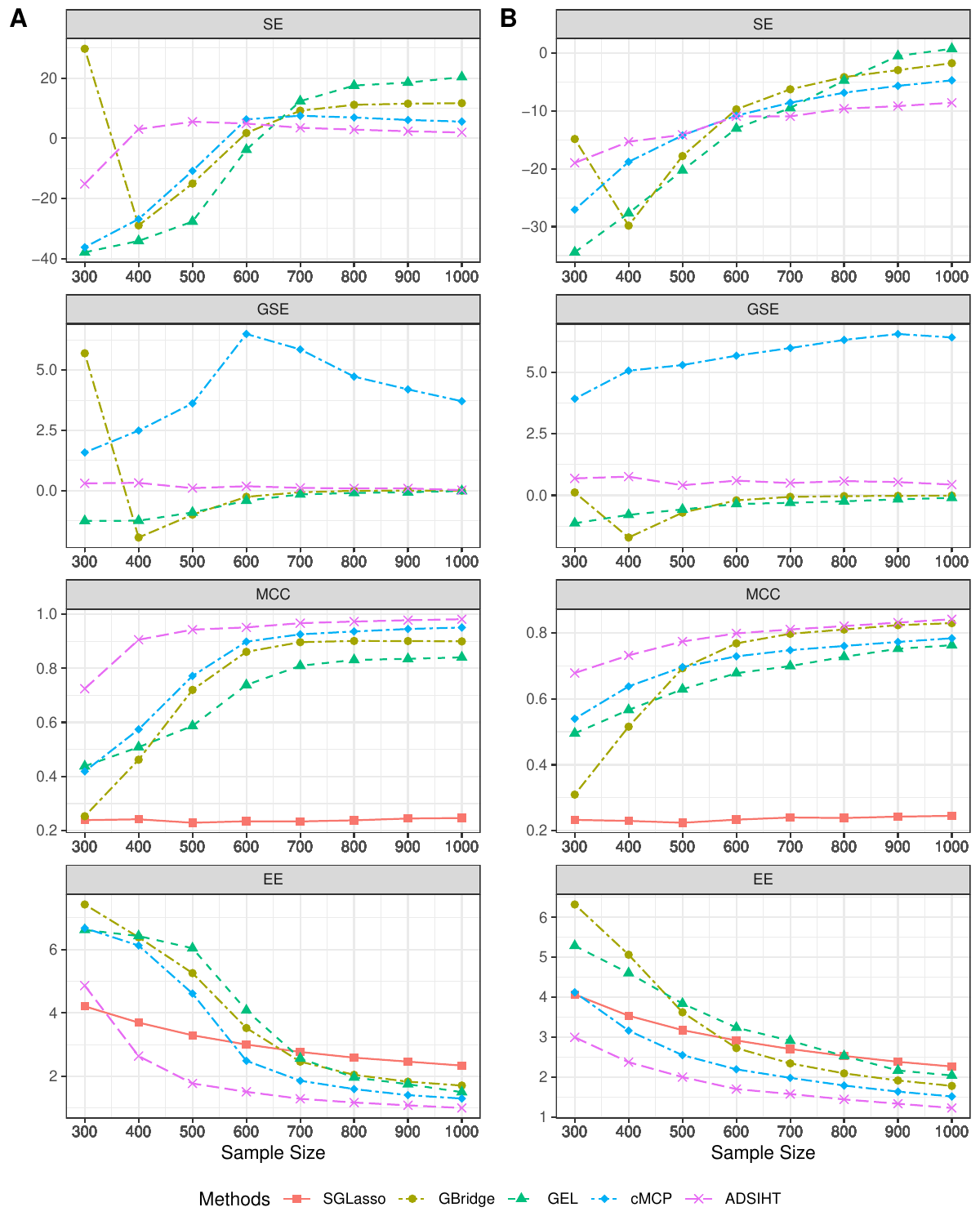}
  \caption{\label{plot3}Performance measures as the sample size increases from 300 to 1000.
  (A) Computational results with homogeneous signal. 
  (B) Computational results with heterogeneous signal. 
}
\end{figure}

As shown in Figure \ref{plot3}, the performances of all methods improve significantly as the sample size increases.
Our method notably outperforms the other methods across different statistical metrics.
For the homogeneous signal setup, our method perfectly recovers the support set when the sample size exceeds 800.
In comparison, other methods cannot achieve full support recovery even for a sufficiently large sample size.
In particular, for both setups of signals, our method can estimate the coefficients accurately, which aligns with the minimax optimality of our method in the sense of parameter estimation.

\subsection{Analysis on Real-world Data}
The TRIM32 dataset, which pertains to the Bardet-Biedl syndrome gene expression, was initially presented by \citet{s2006} and has been extensively studied in various statistical works \citep{huang2010, fan2011, zhang2022}.
In this study, 120 twelve-week-old male rats were gathered for tissue harvesting from the eyes and for micro-array analysis.
For this data set, TRIM32, a gene that has been associated with causing Bardet-Biedl syndrome \citep{chiang2006homozygosity}, serves as the response variable, while the remaining 18,975 gene probes that have the potential to impact TRIM32 expression are treated as covariates.

In this paper, we aim to identify the genes which are statistically significantly related to gene TRIM32 and build an accurate prediction model.
Of the 18,975 probes, the top 300 probes with the highest marginal ball correlation \citep{pan2019} are considered. Then, for each gene, we utilize a ten-term natural cubic spline basis expansion to form a group with 10 variables. This technique, which is commonly employed in scientific research \citep{huang2010, B2015, zhang2022}, allows us to analyze the data more effectively. After performing the aforementioned operations, this problem can be described as a high-dimensional variable selection problem with $n=120$, $m=300$, and $d = 10$.
In our analysis, the 120 rats are randomly split into a training set with 100 samples and
a test set with the remaining 20 samples. We repeat these random splitting procedures 200 times and
compute the average of the numbers of selected variables and groups and the prediction mean square error (PMSE) in the
test set. The computational results and the box plot of the PMSE are shown in Table \ref{tab2} and Figure \ref{fig5}, respectively.
\begin{table}[htbp]
\caption{\label{tab2}Computational results for TRIM32 dataset. The standard deviations are shown in parentheses.}
  \centering
  \setlength{\tabcolsep}{4mm}{
  \begin{tabular}{*{4}c}
    \toprule
    Method & Number of variables & Number of groups & 100$\times$PMSE   \\
    \midrule
    SGLasso & 139.26  (68.74) & 26.95  (12.45) &  1.71 (1.84) \\
    GBridge & 2.95 (0.81) & 1.04 (0.18) & 2.01 (2.00)\\
    GEL & 35.78  (28.03) & 7.07  (3.69) & 2.55 (2.70)\\
    cMCP & 21.60  (3.37) & 20.95 (3.08) & 1.92 (2.12) \\
    ADSIHT & 29.06  (11.93) & 9.20  (4.09) & \bf{1.70 (1.85)}\\
    \bottomrule
  \end{tabular}}
\end{table}

Table \ref{tab2} demonstrates that SGLasso identifies significantly more variables and groups than other methods. However, this does not lead to the best prediction performance on the test set. 
On the other hand, our proposed method delivers the highest statistical accuracy in predicting outcomes, despite using fewer variables and groups.
Furthermore, Figure \ref{fig5} illustrates that our approach is both accurate and robust in its predictive performance, demonstrating the superiority of our method over other methods.

\begin{figure}[H]
  \centering
  \includegraphics[scale = 0.5]{./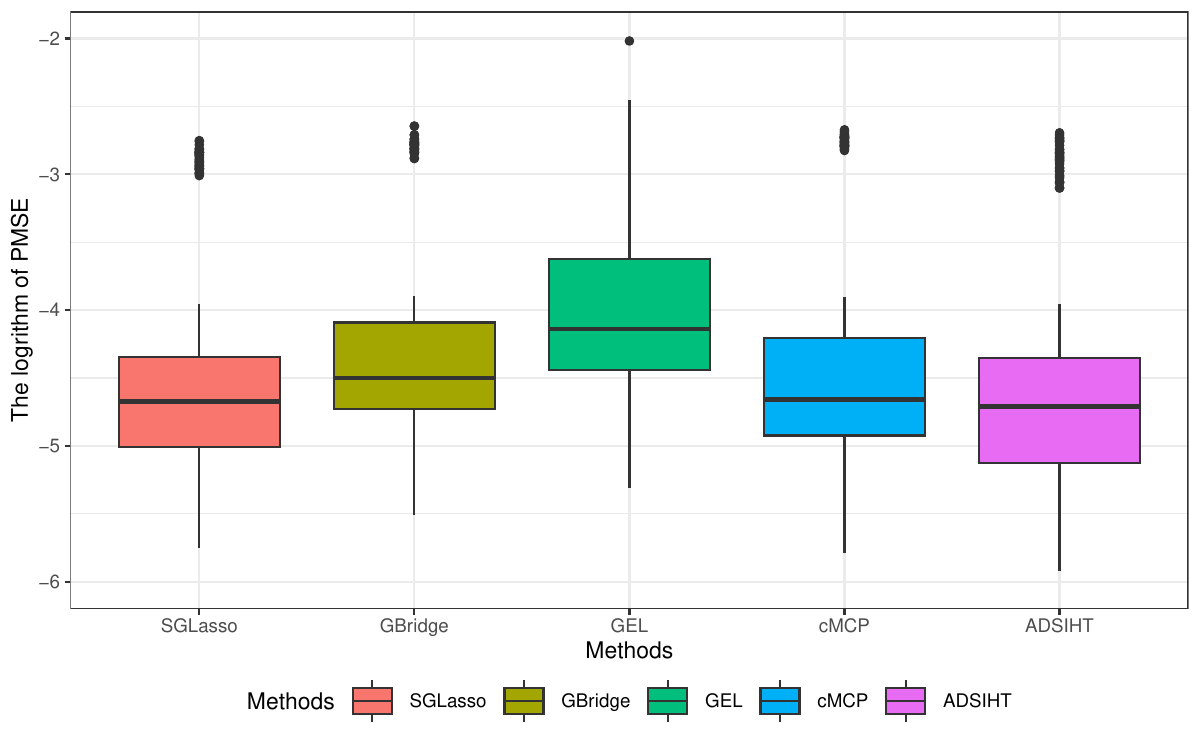}
  \caption{\label{fig5} Boxplot of the PMSE.}
\end{figure}

To perform further investigation, we consider the entire set of 120 samples to learn a double sparse linear model for TRIM32 expression. Figure \ref{fig:qq} displays QQ-plots of the residuals estimated from our proposed method and comparative methods. The sub-figures of cMCP and ADSIHT have points that roughly lie on the diagonal line, which indicates the satisfaction of the normality assumption.
In contrast, Figure \ref{fig:qq} reveals that the residual distributions of SGLasso, GBridge, and GEL have longer tails on the left side, which implies that analyzing this dataset using the fitted linear models may be unconvincing.
\begin{figure}[htbp]
  \centering
  \includegraphics[scale = 0.65]{./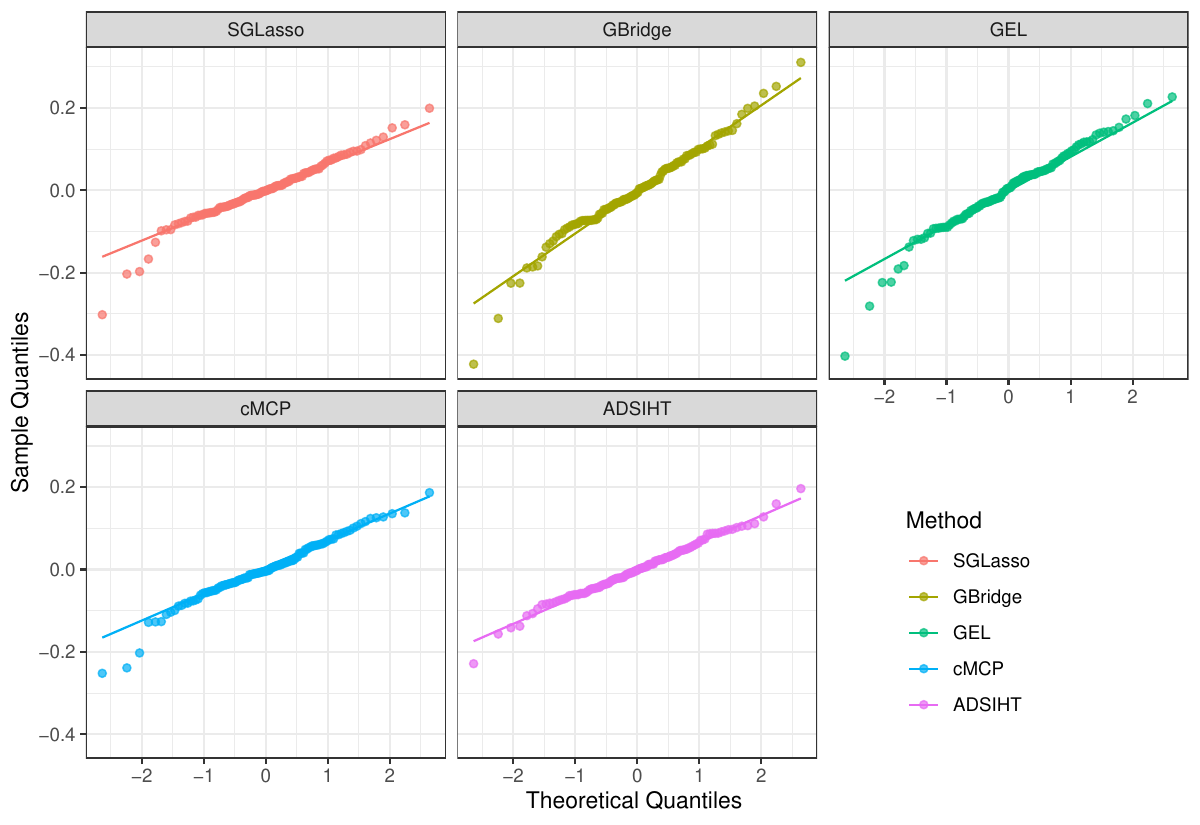}
  \caption{\label{fig:qq}QQ-plots of the residuals.}
\end{figure}
Moreover, we calculate the $\text{R}^2$ and adjusted $\text{R}^2$ for each method, as outlined in Table \ref{tab3}. The computational results in Table \ref{tab3} demonstrate the favorable fitting performance of ADSIHT. Specifically, ADSIHT effectively identifies 14 important groups and 31 significant variables within these groups, collectively explaining 79\% of the variance in TRIM32 expression. While SGLasso achieves the highest variance explanation in TRIM32 expression, there is a potential concern of overfitting, as it selects an excessively large model.
\begin{table}[htbp]
  \caption{\label{tab3}The $\text{R}^2$ and adjusted $\text{R}^2$ for each method. Adjusted $\text{R}^2$ is omitted for SGLasso due to the excessively large model size selected by SGLasso.}
  \centering
  \setlength{\tabcolsep}{4mm}{
  \begin{tabular}{*{6}c}
    \toprule
     & SGLasso &GBridge & GEL & cMCP & ADSIHT   \\
    \midrule
    $\text{R}^2$ & 88\% & 44\% & 54\% & 75\% & 79\%\\
      Adjusted  $\text{R}^2$ & $\times$ & 42\% & 52\% & 68\% & 71\%\\
    \bottomrule
  \end{tabular}}
\end{table}

\section{Conclusion}\label{conclusion}
In our work, we propose a minimax optimal IHT-style procedure for high-dimensional double sparse linear regression. In specific, we introduce a novel double sparse iterative hard thresholding (DSIHT) operator. To effectively control false discoveries, we iteratively decrease the threshold in the DSIHT operator until it reaches the optimal threshold. Under certain conditions, we prove that our DSIHT algorithm obtains a minimax optimal estimator.

Notably, for the $(s, s_0)$-sparse structure, we devise a fully adaptive optimal procedure that enables our algorithm to derive a minimax optimal estimator with unknown sparsity levels $s, s_0$, and variance $\sigma^2$. Initially, given $s_0$, we introduce an adaptive procedure that determines the optimal stopping time using a variant of the Birg$\mathrm{\acute{e}}$-Massart criterion, which is independent of $s$ and $\sigma$.
Importantly, we highlight the role of sparsity level $s_0$ as the trade-off between IHT and group IHT.
Building on this result, we propose a novel double sparse information criterion to select the optimal $s_0$, making our method a fully adaptive procedure. In theory, we demonstrate that our two-step adaptive procedure achieves optimal statistical accuracy with fast convergence. More importantly, to illustrate why our algorithm outperforms sparse group Lasso, we prove that under the beta-min conditions, our algorithm can attain the oracle estimation rate, which is unachievable for convex estimators, and achieve almost full recovery of the true support set. Finally, numerical experiments show that our methods exhibit more accurate and robust statistical performance than other state-of-the-art methods.

In this paper, we consider the double sparse structure in linear regression, and a similar approach can be explored in generalized linear models or single-index models. Moreover, our technical results can be applicable to various other problems with simultaneous sparsity structures, such as sparse additive models \citep{raskutti2012minimax,yuan2016minimax} and high-dimensional change point problems \citep{liu2021minimax}. We identify these avenues as potential future lines of research.




\newpage

\appendix
\section*{Appendix}The Appendix contains the technical proofs of all Theorems and Corollaries. The proofs of the main results are presented in Appendix A. Appendix B contains the proofs of the auxiliary lemmas. Appendix C provides an example of DSRIP condition under sub-Gaussian random design.
To simplify the notations of the appendix, we denote
\begin{align*}
\D \coloneqq \log \frac{ed}{s_0}+\frac{1}{s_0}\log \frac{em}{s}\quad \text{and}\quad \D' \coloneqq  \log \frac{ed}{s_0} + \frac{1}{s_0} \log em.
\end{align*}
Given a $p$-dimensional vector $\beta$ with $\|\beta\|_0 = \hat A$ and $\|\beta\|_{0,2}= \hat s$, denote
 $$\Omega^*(\beta):= (s+ \hat{s})\log \frac{em}{s+ \hat{s}}+(ss_0+ \hat{A})\log\frac{ed(s+ \hat{s})}{s s_0 +\hat{A}}.$$
\section*{Appendix A : Proofs of main results}

\subsection* {Proof of Lemma \ref{lemma:iht1}}

  From Theorem 2.1 of \citet{hsu2012tail}, $\forall S \in \mathcal{S}$, we have
  \begin{equation}\label{lem1eq1}
  P\left(\frac{\|X_S^{\top}\xi\|_2^2}{\sigma^2} \ge Tr(X_S^{\top} X_S)+ 2\|X_S^{\top} X_S\|_F\sqrt{t}+2\lambda_{\max}(X_S^{\top} X_S) t\right)\le e^{-t},
  \end{equation}
  where constant $t\geq0$.
  Since $X_S \in \mathbb{R}^{n \times ss_0}$ and $\|X_j\|_2 = \sqrt{n}, j\in [p]$, we have 
  \begin{equation}\label{lem1eq2}
	Tr(X_S^{\top} X_S) = \left(\sum_{j=1}^{ss_0}\sum_{i=1}^n X_{ij}^2\right) \le ss_0 n.
\end{equation}
On one hand, we have
\begin{equation}\label{lem1eq4}
\lambda_{\max}(X_S^{\top} X_S) \le n(1+\delta) .
\end{equation}
On the other hand, from \eqref{lem1eq2} and \eqref{lem1eq4}, we have
\begin{equation}\label{lem1eq3}
\|X_S^{\top} X_S\|_F = \sqrt{Tr(X_S^{\top} X_SX_S^{\top} X_S)} \le (1+\delta )\sqrt{ss_0} n,
\end{equation}
Substituting \eqref{lem1eq2} - \eqref{lem1eq3} into \eqref{lem1eq1}, we have
\begin{equation*}
P\left(\frac{1}{n\sigma^2}\|X_S^\top\xi\|_2^2 \ge 2(1+\delta)\left[\sqrt{t} +\frac{\sqrt{ss_0}}{2}\right]^2+ \frac{1-\delta}{2}ss_0\right)\le e^{-t}.
\end{equation*}
Note that $\delta < 1$ and $\D \gg 1$. For some positive constant $C$, let $t=(1+C) ss_0\D$, and we have
$2(1+\delta)\left[\sqrt{t} +\frac{\sqrt{ss_0}}{2}\right]^2+ \frac{1-\delta}{2}ss_0 < 4 ss_0\D.$
Consequently, we have 
\begin{equation}\label{lem1eq5}
  P\left(\frac{1}{n}\|X_S^\top\xi\|_2^2 \ge 4\sigma^2 ss_0\D\right)\le e^{-(1+C) ss_0\D}.
  \end{equation}
Note that 
\begin{equation}\label{lem1eq6}
  |\mathcal{S}^{m,d}(s,s_0)| \le {m \choose s}\times{sd \choose ss_0}\leq (\frac{em}{s})^s\times (\frac{ed}{s_0})^{ss_0}\leq e^{ss_0\Delta}.
\end{equation}
Therefore, combining \eqref{lem1eq5} and \eqref{lem1eq6}, we have
\begin{align*}
  &P\left(\forall {S \in \mathcal{S}^{m, d}(s,s_0)},\sum_{i \in S}\Xi_{i}^2 \le \frac{4\sigma^2 ss_0\D}{n}\right) \\
  =&1-P\left(\exists {S \in \mathcal{S}^{m, d}(s,s_0)},\sum_{i \in S}\Xi_{i}^2 > \frac{4\sigma^2 ss_0\D}{n}\right)\\
  \geq&1-|\mathcal{S}^{m,d}(s,s_0)|P\left(\sum_{i \in S}\Xi_{i}^2 > \frac{4\sigma^2 ss_0\D}{n}\right)\\
  \geq& 1-e^{-C ss_0\D},
\end{align*}
where the first inequality follows from the union bound. This completes the proof of Lemma \ref{lemma:iht1}.

\subsection* {Proof of Theorem \ref{th1}}

We proceed with the proof of Theorem \ref{th1} under the assumption that event $\mathcal{E}$ holds. Initially, it's straightforward to confirm that the results are trivial for $t = 0$. 
  Then, we assume that the results are true for step $t$, and prove them for step $t+1$.

 We first prove \eqref{eq:iht3} and \eqref{eq:iht5} by contradiction. 
  Assume that \eqref{eq:iht3} and \eqref{eq:iht5} are wrong for $t+1$, i.e., $S_{G^*}\cap S^{t+1} \cap (S^*)^c \notin \mathcal{S}^{m, d}(s, s_0)$ and $S_{G^*}^c\cap S^{t+1} \notin \mathcal{S}^{m, d}(s, s_0)$. 
  
\subsubsection*{Step 1}

For result \eqref{eq:iht3}, note that $S_{G^*}\cap(S^*)^c$ covers no more than $s$ groups. According to the {\bf{Case 1}} in Section \ref{analysis}, it holds that there exists a $(s, s_0)$-shaped subset $\tilde{S}_{1,t+1}$ of $S_{G^*}\cap(S^*)^c$
  with cardinality $ss_0$
  such that
  $$
  ss_0 \lambda_{t+1}^2 \le \sum_{i \in \tilde{S}_{1,t+1}}\left\{\mathcal{T}_{\lambda_{t+1,s_0}}(H^{t+1})\right\}_{i}^2.
  $$
  Note that $\beta^*_i = 0$ for $i \in \tilde{S}_{1,t+1} \subseteq(S^*)^c$. 
Then, using equation \eqref{eq:H} and the triangle inequality, we obtain
  $$
  \sqrt{ss_0}\lambda_{t+1} \le \sqrt{\sum_{i\in \tilde{S}_{1,t+1}}\langle \Phi_{i}^\top, \beta^{t}-\beta^*\rangle^2}+\sqrt{\sum_{i\in \tilde{S}_{1,t+1}}\Xi_{i}^2}.
  $$
  Recall that $\beta^*$ is $(s,s_0)$-sparse, and both \eqref{eq:iht3} and \eqref{eq:iht5} hold for $t$ by assumption. Then, we have $\beta^{t}-\beta^*$ is $(2s, \frac{3}{2}s_0)$-sparse. 
  Consequently, using the DSRIP condition and Lemma \ref{lemma:iht1}, we have
  \begin{align*}
    \sqrt{ss_0}\lambda_{t+1} 
\le&  \delta \|\beta^* - \beta^t\|_2 + 2\sigma\sqrt{\frac{ss_0\D}{n}}\\
\le&  \frac{3}{2}(1+\sqrt{2})\delta \sqrt{ss_0}\lambda_t + 2\sigma\sqrt{\frac{ss_0\D}{n}}\\
\le&  \frac{3}{2}(1+\sqrt{2})\delta^{\frac{9}{10}} \sqrt{ ss_0}\lambda_{t+1} + \frac{1}{2}\sqrt{ss_0}\lambda_{\infty}\\
\le& \left(\frac{1}{2}+ \frac{3}{2}(1+\sqrt{2})\delta^{\frac{9}{10}}\right)\sqrt{ss_0}\lambda_{t+1}\\\
<& \sqrt{ss_0}\lambda_{t+1},
  \end{align*}
  which leads to a contradiction. 
  Since we have assumed that \eqref{eq:iht4} holds for $t$, the second inequality holds based on it, and the last inequality follows from $\delta < 0.11 \wedge \kappa^{10}$.
  Therefore, we have $S_{G^*}\cap S^{t+1} \cap (S^*)^c \in \mathcal{S}^{m, d}(s, s_0)$, indicating that \eqref{eq:iht3} holds for $t+1$.
  
\subsubsection*{Step 2} 

For result \eqref{eq:iht5}, if $S_{G^*}^c\cap S^{t+1}$ covers no more than $s$ groups, 
  the analysis of result \eqref{eq:iht5} is the same as {\bf{Step 1}}. Otherwise, according to {\bf{Case 2}} in Section \ref{analysis}, there exists a $(s, s_0)$-shaped subset $\tilde{S}_{2,t+1}$ of $S_{G^*}^c$ such that
  $$
  ss_0 \lambda_{t+1}^2 \le \sum_{i \in \tilde{S}_{2,t+1}}\left\{\mathcal{T}_{\lambda_{t+1,s_0}}(H^{t+1})\right\}_{i}^2.
  $$
  The remaining proof of \eqref{eq:iht5} is similar to {\bf{Step 1}}.
  Therefore, \eqref{eq:iht5} holds for $t+1$.
 
\subsubsection*{Step 3} 

We now turn to the proof of \eqref{eq:iht4}. 
  Note that results \eqref{eq:iht3} and \eqref{eq:iht5} hold for $t+1$,
  which imply that $\beta^{t+1} - \beta^*$ is $(2s, \frac{3}{2}s_0)$-sparse. 
  Observe that for any $i \in [p]$, 
  \begin{equation}\label{eq:iht10}
  \beta^{t+1}_i - \beta^*_i = -H^{t+1}_i \mathrm{I}(i \notin S^{t+1}) + \langle \Phi_{i}^\top, \beta^*-\beta^{t}\rangle + \Xi_i.
\end{equation}
  On one hand, summing both sides of \eqref{eq:iht10} over set $S^{t+1}\cap (S^*)^c$, we have 
  \begin{align}\label{eq:iht8}
    \begin{split}
    \| \beta^{t+1}_{(S^*)^c}\|_2 \le& \sqrt{\sum_{i\in S^{t+1}\cap (S^*)^c}\langle \Phi_{i}^\top, \beta^*-\beta^{t}\rangle^2}+\sqrt{\sum_{i\in S^{t+1}\cap (S^*)^c}\Xi_{i}^2}\\
    \le& \delta \| \beta^*-\beta^{t} \|_2 + 2\sigma\sqrt{\frac{2ss_0\D}{n}},
  \end{split}
  \end{align}
  where the right-hand side of the second inequality comes from the accumulation of two parts of random errors corresponding to \eqref{eq:iht3} and \eqref{eq:iht5}.
  On the other hand, summing both sides of \eqref{eq:iht10} over support set $S^*$, we have 
  \begin{equation}\label{eq:iht9}
    \begin{aligned}
      \| (\beta^{t+1}-\beta^*)_{S^*} \|_2 \le& \sqrt{\sum_{i\in S^*}(H^{t+1}_i)^2\mathrm{I}(i \notin S^{t+1})}  +\sqrt{\sum_{i\in S^*}\langle \Phi_i^\top, \beta^*-\beta^{t}\rangle^2}+ \sqrt{\sum_{i\in S^*}\Xi_i^2}\\
      \le& \sqrt{2ss_0}\lambda_{t+1} + \delta\| \beta^*-\beta^{t}\|_2 +2\sigma\sqrt{\frac{ss_0\D}{n}}.
    \end{aligned}
  \end{equation}
  Since the procedure of operator $\mathcal{T}_{\lambda, s_0}(\cdot)$ has two steps, the term 
$\sum_{i\in S^*}(H^{t+1}_i)^2$ in \eqref{eq:iht9} is upper bounded by $2ss_0\lambda^2_{t+1}$.
Combining \eqref{eq:iht8} and \eqref{eq:iht9}, we conclude that
  $$
  \begin{aligned}
    \|\beta^{t+1}-\beta^* \|_2 
    &\le \| \beta^{t+1}_{(S^*)^c}\|_2+\|(\beta^{t+1}-\beta^*)_{S^*} \|_2\\
    &\le \sqrt{2ss_0}\lambda_{t+1} + 2\delta\|\beta^* - \beta^t\|_2 + 2(1+\sqrt{2})\sigma\sqrt{\frac{ss_0\D}{n}}\\
    &\le \left(\sqrt{2}+3(1+\sqrt{2}) \delta^{ \frac{9}{10}} +\frac{1+\sqrt{2}}{2}\right)\sqrt{ss_0}\lambda_{t+1}\\
    &\le \frac{3}{2}(1+\sqrt{2})\sqrt{ss_0}\lambda_{t+1},
  \end{aligned}
  $$
  where the third and the last inequalities follow from $\delta <0.11 \wedge \kappa^{10}$.
  We prove that $\eqref{eq:iht4}$ holds for $t+1$.
  
  Finally, we have proved that the results in Theorem \ref{th1} hold for $t+1$ under the induction hypothesis. This completes the proof of Theorem \ref{th1}.

\subsection* {Proof of Theorem \ref{thm:lower}}
  Consider the $\frac{ss_0}{4}$-packing set $\{\beta^1, \ldots, \beta^{M}\}$, where $M$ is the shorthand for the packing number $M(\frac{ss_0}{4};\widetilde\Theta^{m,d}(s, s_0), \|\cdot\|_H)$.
We set all the non-zero elements of $\beta \in \{\beta^1, \ldots, \beta^{M}\}$ equal to $\delta$, where $\delta$ is a parameter that need to be determined below.
  For any $\beta^{i} \neq \beta^{j}$, since each $\beta^i$ has at most $ss_0$ nonzero elements, we have
  \begin{equation}\label{ap16}
    \|\beta^{i}- \beta^{j}\|_2^2 \leq 2ss_0\delta^2,\ \forall i,j \in [M].
  \end{equation}
  On the other hand, since $\{\beta^1, \ldots, \beta^{M}\}$ is a $\frac{ss_0}{4}$-packing set of $\widetilde\Theta^{m,d}(s, s_0)$, we have 
  \begin{equation}\label{ap3}
    \|\beta^{i}- \beta^{j}\|_2^2 \geq \frac{1}{4}ss_0\delta^2,\ \forall i,j \in [M].
  \end{equation}
  Given design matrix $X$, denote $y^{i} = X\beta^{i} + \xi, \forall i \in [M]$. We consider the Kullback-Leibler divergence
between different distribution pairs as
$$
\begin{aligned}
KL\left(y^{i}|| y^{j}\right) &= \frac{1}{2\sigma^2}\|X(\beta^{i}-\beta^{j})\|_2^2\\
&\le \frac{n\vartheta_{\max}^2}{2\sigma^2}\|\beta^{i}-\beta^{j}\|_2^2,
\end{aligned}
$$
where the last inequality follows from the eigenvalue value condition of $X$ and $\beta^{i}- \beta^{j} \in \widetilde\Theta^{m,d}(2s, 2s_0)$.
Denote $B$ as the random vector uniformly distributed over the packing set. 
Observe that
  \begin{align}\label{ap6}
    \begin{split}
      I(y;B) \leq& \frac{1}{{M \choose 2}} \sum_{i \neq j} KL(y^{i} || y^{j})\\
      \leq & \frac{1}{{M \choose 2}} \sum_{i \neq j} \frac{n\vartheta_{\max}^2}{2\sigma^2}\|\beta^{i}-\beta^{j}\|_2^2\\
      \leq&\frac{n\vartheta_{\max}^2}{\sigma^2}ss_0\delta^2,
    \end{split}
  \end{align}
 where the last inequality uses \eqref{ap16}.
  Combining the generalized Fano's Lemma \citep{thomas2006elements} and \eqref{ap6}, we have
  \begin{align*}
    P(B \neq \widetilde\beta)
    &\geq 1 - \frac{I(y;B)+\log 2}{\log M}\\
    &\geq 1 - \frac{\frac{ n\vartheta_{\max}^2}{\sigma^2}ss_0\delta^2+\log 2}{\log M},
  \end{align*}
  where $\widetilde\beta$ takes value in the packing set.
  To guarantee $P(B \neq \widetilde \beta) \geq \frac{1}{2}$, it suffices to choose
  $\delta = \frac{1}{2}\sqrt{\frac{\sigma^2 \log M}{n\vartheta_{\max}^2ss_0}}$.
  Substituting it into equation \eqref{ap3} and from Lemma \ref{lem2}, we have
\begin{align*}
    &\inf_{\hat\beta}\sup_{\beta^* \in \Theta^{m,d}(s, s_0)} {\mathbf{E}}_{\hat \beta } \|\hat\beta-\beta^*\|_2^2 \\
\geq&  \frac1{16} ss_0 \delta^2 \cdot\inf_{B} P\left( B \ne \tilde \beta \right) \\
\geq& \frac{\sigma^2 \log M}{128n\vartheta_{\max}^2}\\
\ge& \frac{\sigma^2}{ 512 n\vartheta_{\max}^2} 
  \left( ss_0\log \frac{ed}{s_0}+s\log \frac{em}{s} \right),
  \end{align*}
  which completes the proof of Theorem \ref{thm:lower}.

\subsection* {Proof of Theorem \ref{lem:lam0}}

  Using Lemma \ref{lem:sigma}, with probability at least $1-\exp\left\{-Css_0\Delta\right\}$, we have
  \begin{equation}\label{lam0e1}
    \sigma_0 \ge \frac{19}{20}\sigma - \sqrt{1+\delta}\|\beta^*\|_2.
  \end{equation}
  With probability at least $1-\exp\left\{-Css_0\Delta\right\}$, we have
  \begin{align}\label{lam0e2}
    \begin{split}
    \|M_{S^*}\|_2 &= \|\left(\beta^* + \Phi \beta^* + \Xi\right)_{S^*}\|_2\\
    &\geq \|\beta^*\|_2 - \|\Phi \beta^*\|_2 - \|\Xi_{S^*}\|_2\\
    &\geq (1-\delta) \|\beta^*\|_2 - 2\sigma\sqrt{\frac{ss_0\D}{n}}. 
  \end{split}
  \end{align}
Note that $\sqrt{ss_0}\|M\|_{\infty} \geq \|M_{S^*}\|_2$. Combining \eqref{lam0e1} and \eqref{lam0e2}, with probability at least $1-\exp\left\{-Css_0\Delta\right\}$, we have
\begin{align*}
  \sqrt{ss_0} \lambda_0 
  &\geq \frac{100}{9}{\sigma}_0\sqrt{\frac{ss_0\D'}{n}} \vee \frac{19}{4}\sqrt{ss_0}\|M\|_{\infty}\\
  &\geq \frac{9}{19}\times \frac{100}{9}{\sigma}_0\sqrt{\frac{ss_0\D'}{n}}+\frac{10}{19}\times\frac{19}{4}\|M_{S^*}\|_2\\
  &\geq \frac{100}{19}\left(\frac{19}{20}\sigma - \sqrt{1+\delta}\|\beta^*\|_2 \right)\sqrt{\frac{ss_0\D'}{n}}+\frac{5}{2}\left((1-\delta)\|\beta^*\|_2 - 2\sigma \sqrt{\frac{ss_0\D}{n}}\right)\\
	&\geq \left(\frac{5}{2}(1-\delta) - \frac{100}{19}\sqrt{1+\delta}\sqrt{\frac{ss_0\D'}{n}}\right)\|\beta^*\|_2 \\
  &\geq \|\beta^*\|_2 
\end{align*}
where the fourth inequality uses the fact that $\D' \geq \D$, and the last inequality uses the fact that $n > 105^2ss_0\Delta'$ and $\delta < 0.11 $.
We complete the proof of Theorem \ref{lem:lam0}.

\subsection* {Proof of Theorem \ref{thm:stopping}}

  Note that $t_0 \leq t_{\infty}$ holds since $\D' \geq \D$.
  We first claim that $t_0 \geq \bar t$. 
  For any $t \leq t_0$, according to the definition of $t_0$, we have
  \begin{align}\label{stope1}
    \sigma\sqrt{\frac{\D'}{n}} \le \frac{1}{12}\lambda_t.
  \end{align}
  From Lemma \ref{lem:sigma} and Theorem \ref{th1}, with probability at least $1-\exp\left\{-Css_0\Delta\right\}$, we have
  \begin{align}\label{stope2}
    \begin{split}
		{\sigma}_t &\le \sqrt{1+\delta}\|\beta^* - \beta^t\|_2 + \frac{21}{20}\sigma\\
		&\le \sqrt{1+\delta}\frac{3}{2}(1+\sqrt{2})\sqrt{ss_0}\lambda_t+\frac{21}{20}\sigma.
    \end{split}
  \end{align}
  From \eqref{stope2}, it comes out that
  \begin{align}\label{stope3}
    \begin{split}
      8{\sigma}_t\sqrt{\frac{\D'}{n}} &\le 12(1+\sqrt{2})\sqrt{1+\delta}\lambda_t\sqrt{\frac{ss_0\D'}{n}}+ \frac{42}{5}\sigma\sqrt{\frac{\D'}{n}}\\
    &\le 12(1+\sqrt{2})\sqrt{1+\delta}\lambda_t\sqrt{\frac{ss_0\D'}{n}} + \frac{7}{10}\lambda_t\\
    &\le \lambda_t,
  \end{split}
  \end{align}
  where the first inequality uses \eqref{stope1}, and the second inequality follows from $\delta < 0.11 $ and $n > 105^2ss_0\Delta'$.
  \eqref{stope3} leads to the fact that $t \leq \bar t$, which deduces that $t_0 \leq \bar t$ holds with high probability.
  
  Next, we turn to the proof of $\bar t \leq t_{\infty}$. Since $t_0 \leq t_{\infty}$, Theorem \ref{th1} shows us that
  \begin{align}\label{stope4}
    \|\beta^{t_0} - \beta^*\|_2 \le 18(1+\sqrt{2})\sigma\sqrt{\frac{ss_0\D'}{n}}.
  \end{align}
  From Lemma $\ref{lem:sigma}$, for any $t_0 \leq t \leq t_{\infty}$, it holds with probability at least $1-\exp\left\{-Css_0\Delta\right\}$ that
  \begin{align*}
    |{\sigma}_t-\sigma| &\le \sqrt{1+\delta}\|\beta^* - \beta^t\|_2 + \frac{1}{20}\sigma\\
    &\le 18(1+\sqrt{2})\sqrt{1+\delta}\sigma\sqrt{\frac{ss_0\D'}{n}} + \frac{1}{20}\sigma\\
    &\le (\frac{9}{20}+\frac{1}{20})\sigma = \frac{1}{2}\sigma, 
  \end{align*}
  where the second inequality follows from \eqref{stope4}, and the last inequality follows from $\delta <0.11$ and  $n > 105^2ss_0\Delta'$. 
  Combining the above inequalities, we have 
  \begin{equation*}
    \frac{8{\sigma}_t}{\sqrt{n}}\sqrt{\D'} \ge \frac{4\sigma}{\sqrt{n}}\sqrt{\D'}\ge \frac{4\sigma}{\sqrt{n}}\sqrt{\D}.
  \end{equation*}
  This result implies that $\bar t \leq t_{\infty}$, which completes the proof of Theorem \ref{thm:stopping}.

\subsection* {Proof of Theorem \ref{thm:optimal}}

From Lemma \ref{lem:sigma} and \eqref{stope4}, with probability at least $1-\exp\{-Css_0\D\}$, we have
\begin{align}\label{optimale1}
  \begin{split}
  |\sigma_{\bar{t}} - \sigma| &\le \sqrt{1+\delta}\|\beta^{\bar{t}} - \beta^*\|_2 + \frac{1}{20}\sigma\\
  &\le \sigma \left(18(1+\sqrt{2})\sqrt{1+\delta}\sqrt{\frac{ss_0\D'}{n}} + \frac{1}{20}\right)\\
  &\le \frac{1}{10}\sigma,
  \end{split}
\end{align}
where  the last inequality uses $\delta < 0.11$ and  $n > 1000^2ss_0\Delta'$. 

First, we prove $\|\beta^{\tilde{t}}\|_G \leq 4s$ by contradiction. Let us assume that $\|\beta^{\tilde{t}}\|_G > 4s$.
According to the definition of $\tilde t$, we have
\begin{equation}\label{optimale5}
  \frac{1}{n}\left\|y - X{\beta}^{\tilde{t}}\right\|_2^2 + \frac{1000{\sigma}_{\bar{t}}^2\Omega({\beta}^{\tilde{t}})}{n} \leq \frac{1}{n}\left\|y - X{\beta}^{t_{\infty}}\right\|_2^2 + \frac{1000{\sigma}_{\bar{t}}^2\Omega({\beta}^{t_{\infty}})}{n}.
\end{equation}
On one hand, we have 
\begin{equation*}
	\begin{split}
		\left\|y - X{\beta}^{\tilde{t}}\right\|_2^2 &\ge \left\|\xi\right\|_2^2 + \left\|X({\beta}^{\tilde{t}}-\beta^*)\right\|_2^2-2\left|\left\langle\xi,X({\beta}^{\tilde{t}}-\beta^*)\right\rangle\right|\\
    &\ge \left\|\xi\right\|_2^2 + \left\|X({\beta}^{\tilde{t}}-\beta^*)\right\|_2^2-2\sigma\sqrt{3\Omega^*({\beta}^{\tilde{t}})}\left\|X({\beta}^{\tilde{t}} - \beta^*)\right\|_2\\
		&\ge \left\|\xi\right\|_2^2 + \left\|X({\beta}^{\tilde{t}}-\beta^*)\right\|_2^2-2\sigma\sqrt{3\times\frac{5}{4}\Omega({\beta}^{\tilde{t}})}\left\|X({\beta}^{\tilde{t}} - \beta^*)\right\|_2\\
		&\ge \|\xi\|^2 - \frac{15}{4}\sigma^2\Omega({\beta}^{\tilde{t}}),
	\end{split}
\end{equation*}
where the second inequality follows from Lemma \ref{lem:inner}, and the third inequality uses the fact that $\frac{5}{4}\Omega({\beta}^{\tilde{t}}) \geq \Omega^*({\beta}^{\tilde{t}})$ when $\|\beta^{\tilde{t}}\|_G > 4s$. 
The definition of $\Omega^*(\beta)$ is given at the beginning of the Appendix.
By some simple algebras, it comes out that 
\begin{align}\label{optimale2}
  \begin{split}
  \frac{1}{n}\left\|y - X{\beta}^{\tilde{t}}\right\|_2^2 + \frac{1000{\sigma}_{\bar{t}}^2\Omega({\beta}^{\tilde{t}})}{n} \geq & \frac{\|\xi\|_2^2}{n}-\frac{15\sigma^2\Omega({\beta}^{\tilde{t}})}{4n} + \frac{1000{\sigma}_{\bar{t}}^2\Omega({\beta}^{\tilde{t}})}{n}\\
  \geq& \frac{\|\xi\|_2^2}{n}+ \frac{950{\sigma}_{\bar{t}}^2\Omega({\beta}^{\tilde{t}})}{n}.
  \end{split}
\end{align}
On the other hand, we have
\begin{equation*}
	\begin{split}
		\left\|y - X{\beta}^{t_{\infty}}\right\|_2^2 &\le \left\|\xi\right\|_2^2 + \left\|X({\beta}^{t_{\infty}}-\beta^*)\right\|_2^2+2\left|\left\langle\xi,X({\beta}^{t_{\infty}}-\beta^*)\right\rangle\right|\\
		&\le \left\|\xi\right\|_2^2 + \left\|X({\beta}^{t_{\infty}}-\beta^*)\right\|_2^2+6\sigma\sqrt{\Omega(\beta^*)}\left\|X({\beta}^{t_{\infty}} - \beta^*)\right\|_2\\
		&\le \|\xi\|_2^2 + 2\left\|X({\beta}^{t_{\infty}}-\beta^*)\right\|_2^2 +  9\sigma^2\Omega(\beta^*),
	\end{split}
\end{equation*}
where the second inequality follows from \eqref{orthogonal} in Lemma \ref{lem:inner} since $\beta^{t_{\infty}}-\beta^*$ is $(2s, \frac{3}{2}s_0)$-sparse. 
By some simple algebras, it comes out that 
\begin{align}\label{optimale3}
  \begin{split}
  \frac{1}{n}\left\|y - X{\beta}^{t_{\infty}}\right\|_2^2 + \frac{1000{\sigma}_{\bar{t}}^2\Omega({\beta}^{t_{\infty}})}{n} \leq& \frac{\|\xi\|_2^2}{n} + \frac{2}{n}\left\|X({\beta}^{t_{\infty}}-\beta^*)\right\|_2^2 +  \frac{9\sigma^2\Omega(\beta^*)}{n}\\
  ~~~+& \frac{1000{\sigma}_{\bar{t}}^2\Omega({\beta}^{t_{\infty}})}{n}\\
  \leq& \frac{\|\xi\|_2^2}{n} + 2(1+\delta)\left(\frac{3}{2}(1+\sqrt{2})\right)^2\times \frac{16\sigma^2ss_0\D}{n} \\
  ~~~+&  \frac{9\sigma^2ss_0\D}{n} + \frac{1000{\sigma}_{\bar{t}}^2}{n}\left(2s\log \frac{em}{2s}+3ss_0\log \frac{ed}{s_0}\right)\\
  \leq & \frac{\|\xi\|_2^2}{n} + \frac{480\sigma^2ss_0\D}{n} + \frac{3000{\sigma}_{\bar{t}}^2ss_0\D}{n}\\
  \leq & \frac{\|\xi\|_2^2}{n} + \frac{3600{\sigma}_{\bar{t}}^2ss_0\D}{n},
\end{split}
\end{align}
where the third inequality holds for $\delta < \frac{1}{10}$ and the last inequality follows from \eqref{optimale1}.
Combining \eqref{optimale5}-\eqref{optimale3}, from the triangle relationship, we have
\begin{equation*}
  950 \Omega({\beta}^{\tilde{t}}) \le 3600ss_0\D.
\end{equation*}
Recall $\Omega({\beta}^{\tilde{t}}) > 4ss_0\D$ under the assumption $\|\beta^{\tilde{t}}\|_G > 4s$.
Then, we have
 \begin{equation*}
  3800ss_0\D< 950 \Omega({\beta}^{\tilde{t}}) \le 3600ss_0\D.
\end{equation*}
which contradicts the assumption of $\|\beta^{\tilde{t}}\|_G > 4s$. Therefore, we must have $\|\beta^{\tilde{t}}\|_G \leq 4s$.

Next, we show the upper bounds for the estimation error $\|{\beta}^{\tilde{t}}-\beta^*\|$.
From \eqref{optimale5}, we have
\begin{align*}
  \left\|y - X{\beta}^{t_{\infty}}\right\|_2^2 + 1000{\sigma}_{\bar{t}}^2\Omega({\beta}^{t_{\infty}}) \geq& \left\|y - X{\beta}^{\tilde{t}}\right\|_2^2 + 1000{\sigma}_{\bar{t}}^2\Omega({\beta}^{\tilde{t}})\\
  \geq & \left\|y - X{\beta}^{\tilde{t}}\right\|_2^2\\
  \geq & \left\|\xi\right\|_2^2 + \left\|X({\beta}^{\tilde{t}}-\beta^*)\right\|_2^2-2\sigma\sqrt{3\Omega^*({\beta}^{\tilde{t}})}\left\|X({\beta}^{\tilde{t}} - \beta^*)\right\|_2.
\end{align*}
Combining the above inequalities and \eqref{optimale3}, we have 
\begin{equation}\label{optimale6}
  \left\|X({\beta}^{\tilde{t}}-\beta^*)\right\|_2^2-2\sigma\sqrt{3\Omega^*({\beta}^{\tilde{t}})}\left\|X({\beta}^{\tilde{t}} - \beta^*)\right\|_2 \leq 3600\sigma_{\bar t}^2 ss_0\D.
\end{equation}
By solving the quadratic inequalities \eqref{optimale6}, we have
$$
\left\|X({\beta}^{\tilde{t}}-\beta^*)\right\|_2 \le 140\sigma\sqrt{ss_0\D}.
$$
Note that $\beta^{\tilde{t}}$ is $(4s, s_0)$-sparse. Then, we conclude that by DSRIP condition, we have 
\begin{equation*}
  \left\|{\beta}^{\tilde{t}}-\beta^*\right\|_2\leq \frac{\left\|X({\beta}^{\tilde{t}}-\beta^*)\right\|_2}{\sqrt{n(1-\delta)}} \leq 150\sigma \sqrt{\frac{ss_0\D}{n}},
\end{equation*}
which completes the proof of Theorem \ref{thm:optimal}.

\subsection* {Proof of Corollary \ref{cor:iter_num}}

Note that with probability at least $1-p^{-C}$, 
\begin{align}\label{iter_num:e1}
  \begin{split}
  \|M\|_{\infty} \leq& \|\beta^* + \Phi \beta^*\|_{\infty} + \|\Xi\|_{\infty}\\
  \leq& \|\beta^*+\Phi \beta^*\|_2+\|\Xi\|_{\infty}\\
  \leq&(1+\delta)\|\beta^*\|_2+2\sigma\sqrt{\frac{\log ep}{n}}\\
  \leq&4\left(\|\beta^*\|_2 \vee \sigma \sqrt{\frac{\log ep}{n}}\right),
  \end{split}
\end{align}
where the last inequality uses $\delta \leq 1$.
Substituting \eqref{iter_num:e1} into the definition of $\lambda_0$, we have
\begin{align}\label{iter_num:eq2}
  \begin{split}
    \lambda_0 =& \frac{100}{9}{\sigma}_0\sqrt{\frac{\D'}{n}} \vee \frac{19}{4}\|M\|_{\infty}\\
    \leq& 19 \left(\|\beta^*\|_2 \vee \sigma\sqrt{\frac{\log ep}{n}}\right),
  \end{split}
\end{align}
where the last inequality uses $\D' \leq \log (ep)$.
Observe that $\kappa^T \lambda_0 \leq 4\frac{\sigma_{\bar t}}{\sqrt{n}}$. 
By some simple algebras, with probability at least $1-\exp\{-Css_0\D\}$, we have
\begin{align*}
  \begin{split}
    T \leq& \log\left(\frac{\lambda_0 \sqrt{n}}{4\sigma_{\bar t}}\right)/\log(\frac{1}{\kappa})\\
    \leq&\log\left(\frac{5\lambda_0 \sqrt{n}}{18\sigma}\right)/\log(\frac{1}{\kappa})\\
  \leq& \log\left(6(\frac{\sqrt{n}\|\beta^*\|_2}{\sigma} \vee \sqrt{\log ep})\right)/\log(\frac{1}{\kappa}),
  \end{split}
\end{align*}
where the second inequality follows from \ref{optimale1} and the last inequality uses \eqref{iter_num:eq2}.

Therefore,
$$
\sup\limits_{S^* \in \mathcal{S}^{m, d}(s, s_0)} P\left(T \geq \log\left(6(\frac{\sqrt{n}\|\beta^*\|_2}{\sigma} \vee \sqrt{\log ep})\right)/\log(\frac{1}{\kappa})+1 \right)\leq e^{-Css_0 \Delta}.
$$

\subsection*{Proof of Theorem \ref{adaptive2}}
Our technique for tuning $s_0$ is notably distinct and more complex than that of \citet{verzelen2012minimax}. As discussed in Section \ref{tradeoff}, the theoretical properties differ significantly between the cases $\bar s_0 > s_0$ and $\bar s_0 < s_0$. We can control the sparsity at either the element-wise or group-wise level, but not both simultaneously. Additionally, as illustrated in Figure \ref{fig:rate}, the minimax rates for different values of $\bar s_0$ exhibit a``U-shaped" curve, rather than the monotonically increasing trend observed under element-wise sparsity \citep{raskutti2011minimax, verzelen2012minimax}. Therefore, we must separately analyze the cases  $\bar s_0 > s_0$ and $\bar s_0 < s_0$.

\subsubsection*{\bf{Basic inequality of Verzelen's procedure} }

In this part, we give the basic comparable inequality used in the proof. This part is similar to Theorem 5.2 in \citet{verzelen2012minimax}. Denote
	\begin{equation*}
		\operatorname{pen}\left(\bar{s}_0\right)=\frac{K}{n}\left(\A \log ed +\s \log \frac{e m}{\s}\right),\quad \operatorname{pen}{ }^{\prime}\left(\bar{s}_0\right)=-1+\exp \left(\operatorname{pen}\left(\bar{s}_0\right)\right).
	\end{equation*}
By the definition of $\hat{s}_0$, we have

\begin{equation}\label{compara0}
		\frac{1}{n}\left\|y-X \hat{\beta}\left(\hat{s}_0\right)\right\|_2^2  \cdot\left(1+\operatorname{pen}^{\prime}(\hat{s}_0)\right) \leq  \frac{1}{n}\left\|y-X \hat{\beta}\left(s_0\right)\right\|_2^2\cdot \left(1+\operatorname{pen}{ }^{\prime}\left(s_0\right)\right) .
\end{equation}
For the right-hand side of  \eqref{compara0}, a strategy similar to \eqref{optimale3} leads that
\begin{equation}\label{compara1}
		\begin{aligned}
			\frac{1}{n}	 \left\|y-X \hat{\beta}\left(s_0\right)\right\|_2^2  \leq  	\frac{1}{n}\|\xi\|_2^2 +C_1  \frac{\sigma^2 ss_0 \Delta}{n}.
		\end{aligned}
	\end{equation}
Recall that we assume $n$ is large enough so that
$$
\operatorname{pen} (s_0) 
\le \frac{4K  }{n} \left( s s_0 \log ed + s \log(em/s) \right)
< 0.1,
$$
which implies that $ 1+\operatorname{pen}^{\prime}\left(s_0\right)=\exp \left\{\operatorname{pen}\left(s_0\right)\right\} \leqslant e
$. 
Combining \eqref{compara0} and \eqref{compara1}, we have
\begin{equation}
	\frac{1}{n}\left\|y-X \hat{\beta}\left(\hat s_0\right)\right\|_2^2\cdot \left(1+\operatorname{pen}{ }^{\prime}\left( \hat s_0\right)\right) 
 \leq \frac{1}{n}\|\xi\|_2^2 \left(1+ \operatorname{pen}^\prime \left(s_0\right)\right)+C_2 \frac{\sigma^2 ss_0 \Delta}{n}.
\end{equation}

For the left-hand side of \eqref{compara0}, a strategy similar to \eqref{optimale2} leads that
\begin{equation}\label{compara4}
\begin{aligned}
	\frac{1}{n}\left\|y-X \hat{\beta}\left(\hat{s}_0\right)\right\|_2^2  &\geq \frac{1}{n}\|\xi\|_2^2 +\frac{1}{n}\left\|X\left(\beta^*-\hat{\beta}\left(\hat{s}_0\right)\right)\right\|_2^2  \\
	&-\frac2n \left\|X\left(\beta^*-\hat{\beta}\left(\hat{s}_0\right)\right)\right\|_2 
    \cdot  \left| \left\langle \xi, \frac{X\left(\beta^*-\hat{\beta}\left(\hat{s}_0\right)\right)}{\left\|X\left(\beta^*-\hat{\beta}\left(\hat{s}_0\right)\right)\right\|_2} \right\rangle \right|.
\end{aligned}
\end{equation}
Then, we can also upper bound the inner product by Lemma \ref{lem:inner} as
\begin{equation}\label{inner1}
\begin{aligned}
	 \left|\left\langle \frac{\xi}{\sigma}, \frac{X\left(\beta^*-\hat{\beta}\left(\hat{s}_0\right)\right)}{\left\| X\left(\beta^*-\hat{\beta}\left(\hat s_0\right) \right)\right\|_2}\right\rangle\right|^2
    &\precsim \left(ss_0+\hat A(\hat s_0) \right) \log \frac{e d  (s+\hat s(\hat s_0)  ) }{ss_0+\hat A(\hat s_0) } + \left(s+ \hat s(\hat s_0) \right) \log \frac{e m}{s+ \hat s(\hat s_0) } \\
    &\leq \left(ss_0+\hat A(\hat s_0) \right) \log e d + \left(s+\hat s(\hat s_0)  \right) \log \frac{e m}{s+\hat s(\hat s_0)} .\\
\end{aligned}
\end{equation}
Denote
$\mathcal{L}:=\left\|X\left(\beta^*-\hat{\beta}\left(\hat s_0 \right)\right)\right\|_2$ and $\hat{\Gamma} :=\left(ss_0+ \hat A (\hat s_0 )\right) \log {e d} +\left(s+\hat s(\hat s_0)\right) \log \frac{e m}{s+\hat s(\hat s_0) }$. Therefore, by \eqref{compara0}-\eqref{inner1}, we obtain
\begin{equation}\label{compara3}
	 \mathcal{L}^2-C_2' \sigma\sqrt{\hat{\Gamma} } \mathcal{L} \le \operatorname{pen}'(s_0)\| \xi \|_2^2 + C_2 \sigma^2 ss_0 \Delta.
\end{equation}
By Lemma 1 of \citet{laurent2000}, we conclude that $0.9\sigma^2 \le \frac1n \| \xi\|_2^2 \le 1.1 \sigma^2$ holds with probability at least $1-\exp(-C_4n)$. 
Besides, by $\operatorname{pen}(s_0) < 0.1$, we derive that $\operatorname{pen}'(s_0) = \exp(\operatorname{pen}(s_0))-1 \le 2 \operatorname{pen}(s_0)$, therefore
$$
\operatorname{pen}'(s_0)\| \xi \|_2^2 + C_2 \sigma^2 ss_0 \Delta
\le C_3  \sigma^2 \big(ss_0 \log d + s \log(em/s) \big)
\le C_3 \sigma^2 \hat \Gamma,
$$
which leads to
\begin{equation}\label{compara3.5}
	 \mathcal{L}^2-C_2' \sigma\sqrt{\hat{\Gamma} } \mathcal{L} \le  C_3 \sigma^2 \hat \Gamma.
\end{equation}
By solving inequality \eqref{compara3.5}, we obtain the upper bound $
\mathcal{L}^2 \le C_4 \sigma^2  \hat{\Gamma}$. Therefore, to get the optimal upper bound for estimation error, we just need to prove that
\begin{equation}\label{eq:pp}
    \hat{\Gamma} \precsim  ss_0 \log ed + s \log(em/s) . 
\end{equation}
By far, based on table \ref{table:1} we know that $\hat A (\hat s_0) \le 4 ss_0$ for $\hat{s}_0 \le s_0$, and $\hat s (\hat s_0) \le 4 s$ for  $\hat{s}_0 \ge s_0$. 
Therefore, with high probability, we conclude that 
\begin{equation}\label{th16:key}
\hat{\Gamma} \le 
\begin{cases}
5 ss_0\log e d+\left(s+ \hat s (\hat s_0) \right) \log \frac{e m}{s+\hat s(\hat s_0) }, & \hat{s}_0 \le s_0.\\
\left(ss_0+ \hat A (\hat s_0)\right) \log e d+ 5s \log \frac{e m}{s},  & \hat{s}_0 > s_0.
\end{cases}
\end{equation}
For convenience, we divide the next proof into two cases: $ss_0\log  ed \le s\log\frac{em}{s}$ or $ss_0\log ed \ge s\log\frac{em}{s}$.

\subsubsection*{\bf{Assumption A: $ss_0\log ed \ge s\log\frac{em}{s}$.}}

{\bf CASE 1: $\hat{s}_0 \ge s_0$.} 

By \eqref{th16:key}, we need to prove $ \hat A (\hat s_0) \leq 9 ss_0$. By using contradiction, we assume $ \hat A (\hat s_0) > 9s s_0$ holds at first and obtain:
\begin{align}
\begin{split}\label{eq:c1a}
\frac{1+\operatorname{pen}^{\prime}\left(\hat{s}_0\right)}{1+\operatorname{pen}^{\prime}\left( s_0\right)} &\geq \exp \left\{\frac{K}{n}\left( \hat A (\hat s_0) \log ed +  \hat s (\hat s_0) \log \frac{e m}{ \hat s (\hat s_0)}\right)- 4\frac{K}{n}\left(ss_0 \log ed +s \log \frac{e m}{s}\right)\right\} \\
	& \geq\exp \left\{\frac{K}{n}  \hat A (\hat s_0)\log ed -4 \frac{K}{n}\left( ss_0 \log ed +s\log \frac{e m}{s}\right)\right\} \\
	& \geq \exp \left\{\frac{K}{n}  \hat A (\hat s_0) \log e d-8 \frac{K}{n} ss_0 \log ed \right\} \\
	& \geq \exp \left\{\frac{K}{9n}  \hat A (\hat s_0) \log {ed} \right\} .
 \end{split}
	\end{align}
Combining with \eqref{compara1} we have
\begin{equation}\label{eq:c1b}
\frac{1}{n}\left\|y-X \hat{\beta}\left(s_0\right)\right\|_2^2  
\leq  \frac{1}{n}\|\xi\|_2^2 +C_1 \frac{\sigma^2 s s_0 \Delta}{n} 
\leq \frac{1}{n}\|\xi\|_2^2 +C_6 \sigma^2 \frac{ \hat A (\hat s_0) \log {ed} }{n}.
\end{equation}
Besides, by \eqref{compara4}, we also have
\begin{equation}\label{eq:c1c}
 \frac{1}{n} \left\|y-X \hat{\beta}\left(\hat{s}_0\right)\right\|_2^2  \geq  \frac{1}{n}\|\xi\|_2^2 - \frac{1}{n}C_7\sigma^2 \left( \hat A (\hat s_0) \log ed \right) ,
\end{equation}
	where we use $a^2-2ab \ge - b^2$, and the inner product term of \eqref{compara4} is upper bounded  by \eqref{inner1}.
Therefore, combining \eqref{compara0} and \eqref{eq:c1a}-\eqref{eq:c1c}, we have 
\begin{equation}\label{compara5}
	\left( \frac{1}{n}\|\xi\|_2^2 -\frac{C_7\sigma^2}{n} \hat A (\hat s_0) \log  ed \right) \exp \left\{\frac{K}{9 n}\hat A (\hat s_0)\log {ed} \right\} 
 \leq \frac{1}{n}\|\xi\|_2^2 + \frac{C_6\sigma^2}{n}\hat A (\hat s_0)\log ed .
\end{equation}
Let $t = \frac{1}{9 n} \hat A (\hat s_0)\log ed $. Note that $n$ is large enough such that $t \in (0,\frac{1}{K})$ by Assumption \ref{samplesize}. To establish a contradiction, we need to verify that for $t \in (0,\frac{1}{K})$, 
\begin{equation}\label{ft}
F(t) = \left(1-\frac{9C_7 t}{\|\xi\|_n^2/\sigma^2} \right)\exp(Kt) - \left(1+ \frac{9 C_6 t}{\|\xi\|_n^2/\sigma^2} \right) > 0
\end{equation}
always holds. Note that $F(0) = 0$ and $F'(t) = \exp(Kt)\left\{K-\frac{9C_7(Kt+1) }{\|\xi\|_n^2/\sigma^2}\right\} -\frac{9C_6}{\|\xi\|_n^2/\sigma^2}$. 
By $ 0.9\sigma^2 \le \frac{1}{n}\|\xi\|_2^2  \le 1.1 \sigma^2$ and $Kt \in (0,1)$, we could select a sufficiently large $K \ge 10 C_6 + 20 C_7$. Hence, we verify that $F'(t) > 0$ for $\forall t \in (0, \frac{1}{K})$, which leads to an absurd to \eqref{compara5} with high probability.

Therefore, we prove $\hat A (\hat s_0) \le 9ss_0$ holds with high probability. Then, based on \eqref{th16:key} we derive that
\begin{equation}
	\hat \Gamma \le 10  \left( ss_0 \log ed + s \log(em/s)\right), ~ \forall \hat{s}_0 \ge s_0,
\end{equation}
which proves \eqref{eq:pp} holds with high probability.

{\bf CASE 2: $\hat {s}_0 <s_0$.} 

We divide this case into two subcases and analyse them respectively.
\begin{enumerate}[(a)]
    \item If  $9ss_0\log ed  >\hat s (\hat s_0) \log \frac{e m}{\hat s (\hat s_0)}$, we just bound the inner product in \eqref{inner1} by:
 $$
 \begin{aligned}
	 \left|\left\langle\xi, \frac{X\left(\beta^*-\hat{\beta}\left(\hat{s}_0\right)\right)}{\left\| X\left(\beta^*-\hat{\beta}\left(\hat s_0\right) \right)\right\|_2}\right\rangle\right|^2 &\leq 5s s_0 \log ed +(s+\hat s (\hat s_0)) \log \frac{e m}{(s+\hat s (\hat s_0))}  \\
 & \leq  5 ss_0 \log ed + s \log(em/s) + 9 ss_0 \log ed \\
 &\le 14 \left( ss_0 \log {e d} + s \log ({e m}/{s } ) \right),
 \end{aligned}
 $$
 where the first inequality uses $\hat A (\hat s_0) < 4 ss_0$ for $\hat s_0 < s_0$, and the second inequality uses $\hat s (\hat s_0) \log \frac{e m}{\hat s (\hat s_0)} < 9ss_0\log ed $.
By solving \eqref{compara3}, we derive an upper bound for $\mathcal{L}$ as
\begin{equation}\label{c2win}
	\mathcal{L}^2 \precsim \frac{\sigma^2}{n}\left(ss_0\log ed +s\log\frac{em}{s}\right),
\end{equation}
therefore by DSRIP condition we prove \eqref{eq:so}.

\item If $9 s s_0 \log ed  \leq \hat s (\hat s_0) \log \frac{e m}{\hat s (\hat s_0) }$.
Then, similar to case 1, we will find an absurd with high probability. First, we obtain
$$
\begin{aligned}
	\frac{1+\operatorname{pen}^{\prime}\left(\hat{s}_0\right)}{1+\operatorname{pen}^{\prime}\left(s_0\right)} &\geq \exp \left\{\frac{K}{n} \hat s (\hat s_0) \log \frac{e m}{\hat s (\hat s_0)}-8 \frac{K}{n} s s_0 \log ed\right\}\\
	& \ge \exp \left\{\frac{K}{9 n} \hat s (\hat s_0) \log \frac{e m}{\hat s (\hat s_0)}\right\}.
\end{aligned}
$$
Then, use similar techniques in \eqref{eq:c1b} and \eqref{eq:c1c},we obtain the following inequalities:
\begin{equation}\label{Acase2b1}
\begin{aligned}
\frac{1}{n}\left\|y-X \hat{\beta}\left(s_0\right)\right\|_2^2  
&\leq  \frac{1}{n}\|\xi\|_2^2 +C_1 \frac{\sigma^2 s s_0 \Delta}{n} \\
&\leq \frac{1}{n}\|\xi\|_2^2 + \frac{2C_1}{9n}\sigma^2 \hat s (\hat s_0) \log \frac{e m}{\hat s (\hat s_0) },\\
\frac{1}{n} \left\|y-X \hat{\beta}\left(\hat{s}_0\right)\right\|_2^2  
&\geq  \frac{1}{n}\|\xi\|_2^2 - \frac{3\sigma^2}{n} \left( \big(s+ \hat s(\hat s_0)\big) \log \frac{e m}{s+\hat s(\hat s_0) }+ 4ss_0\log ed \right)\\
&\geq  \frac{1}{n}\|\xi\|_2^2 - \frac{14\sigma^2}{3n}\hat s(\hat s_0)\log \frac{e m}{\hat s(\hat s_0)},
\end{aligned}
\end{equation}
and thus
\begin{align}
\begin{split}\label{compara6}
	&\left( \frac{1}{n}\|\xi\|_2^2 -\frac{14\sigma^2}{3n}\hat s(\hat s_0)\log \frac{e m}{\hat s(\hat s_0)}\right) \exp \left\{\frac{K}{9 n}  \hat s (\hat s_0) \log \frac{e m}{\hat s (\hat s_0)} \right\} \\
 \leq &\frac{1}{n}\|\xi\|_2^2 +\frac{2C_1}{9n}\sigma^2 \hat s (\hat s_0) \log \frac{e m}{\hat s (\hat s_0) }.
 \end{split}
\end{align}

Now let $t = \frac{1}{9n}\hat s (\hat s_0) \log \frac{e m}{\hat s (\hat s_0) }$, and using the same technique corresponding to \eqref{ft}, with a sufficiently large $K \ge \frac{20C_1 + 840}{9}$ and we get an absurd.
Therefore, we prove that with high probability, $9 s s_0 \log ed > \hat s (\hat s_0) \log \frac{e m}{\hat s (\hat s_0) }$ holds, which leads to \eqref{c2win} and completes the proof in case 2.

By far, we have finished the proof in \textbf{Assumption A: $ss_0\log ed \ge s\log\frac{em}{s}$}. When $ss_0\log ed < s\log\frac{em}{s}$, the proof strategy is similar and we just give a proof sketch below.
\end{enumerate}

\subsubsection*{\bf{Assumption B: $ss_0\log ed < s\log\frac{em}{s}$.}}

{\bf CASE 3: $\hat {s}_0 \ge s_0$.} 

Similar to case 2, we continue to divide this case into two subcases:
\begin{enumerate}[(a)]
    \item If $\hat A (\hat s_0) \log ed \le 9s\log(em/ s)$, we obtain
    \begin{equation}\label{Bcase3eq1}
    \begin{aligned}
    \hat{\Gamma} &=\left(ss_0+ \hat A (\hat s_0 )\right) \log {e d} +\left(s+\hat s(\hat s_0)\right) \log \frac{e m}{s+\hat s(\hat s_0) }\\
    &\le ss_0 \log ed + 9 s\log (em/ s) + 5s \log (em/ s)\\ 
    &\le 14\left( ss_0 \log {e d} + s \log \frac{e m}{s } \right).
    \end{aligned}
    \end{equation}
    Therefore, we prove that \eqref{eq:pp} holds.

    \item If $\hat A (\hat s_0) \log ed > 9s\log(em/ s)$, we show that
$$
\frac{1+\operatorname{pen}^\prime\left(\hat{s}_0\right)}{1+\operatorname{pen}^\prime\left( {s}_0\right)} 
\ge \exp \left\{\frac{K}{9 n} \hat A(\hat s_0) \log ed \right\}.
$$
    Hence by using a strategy similar to \eqref{Acase2b1} and \eqref{compara6}, we show that $\hat A (\hat s_0) \log ed > 9s\log(em/ s)$ can not hold with high probability. 
    Therefore, in case 3, \eqref{Bcase3eq1} holds with high probability, which prove that \eqref{eq:pp} holds.
\end{enumerate}

{\bf CASE 4: $\hat {s}_0 <s_0$.} 

In this case, we just need to control $\hat s(\hat s_0)$. 
By using a contradiction similar to case 1,  at first, we assume $\hat s(\hat s_0) \ge 9s$ holds, which leads that
\begin{equation} 
	\begin{aligned}
 \frac{1+\operatorname{pen}^{\prime}\left(\hat{s}_0\right)}{1+\operatorname{pen}^{\prime}\left( s_0\right)} &\geq \exp \left\{\frac{K}{n}\left( \hat A (\hat s_0) \log ed +  \hat s (\hat s_0) \log \frac{e m}{ \hat s (\hat s_0)}\right)- 4\frac{K}{n}\left(ss_0 \log ed +s \log \frac{e m}{s}\right)\right\} \\
	& \geq \exp \left\{\frac{K}{9n}  \hat s (\hat s_0) \log \frac{em}{\hat s (\hat s_0)} \right\},
	\end{aligned}
\end{equation}
and by using a strategy similar to \eqref{eq:c1b}-\eqref{ft} we prove that $\hat s(\hat s_0) \ge 9s$ can not hold with high probability. Therefore we obtain
\begin{equation}
\begin{aligned}
\hat{\Gamma} &=\left(ss_0+ \hat A (\hat s_0 )\right) \log {e d} +\left(s+\hat s(\hat s_0)\right) \log \frac{e m}{s+\hat s(\hat s_0) }\\
&\le 5ss_0\log ed  + 10s \log (em/ s)\\ 
&\le 10\left( ss_0 \log {e d} + s \log \frac{e m}{s } \right),~ \forall \hat s_0 < s_0,
\end{aligned}
\end{equation}
which leads to \eqref{eq:pp}.

Overall, combining these 4 cases, we derive the upper bound for $\mathcal{L}$ as
 $$
 	\mathcal{L}^2 \precsim  \frac{\sigma^2\left(ss_0\log ed +s\log\frac{em}{s}\right)}{n},
 $$
 and by DSRIP condition, we finally complete the proof of Theorem \ref{adaptive2}.

\subsection*{Proof of Theorem \ref{T6}}

We use a strategy similar to Theorem \ref{th1} to prove these results. 
For ease to display, we define $\Upsilon(A, \tilde \beta^t): =  \sum_{(i,j) \in A} \langle \Phi_{ij}^\top, \beta^*-\tilde \beta^{t}\rangle^2 $. In specific, if $A \cup \text{supp}(\beta^*- \tilde \beta^t) \in \mathcal S^{m,d}(3s,\frac{5s_0}{3})$, by DSRIP$(3s, \frac53s_0, \delta)$ condition, we have $\Upsilon(A, \tilde \beta^t) \le \delta^2 \| \beta^*-\tilde \beta^{t} \|_2^2$.
In the proof of Theorem \ref{T6} and \ref{T7} (and also in Lemma \ref{OGChi2}-\ref{supporterror}), we use double index $(i,j)$ to denote the $i$-th entry (variable) of the $j$-th group $G_j$. 
Firstly, we provide the probability inequalities used frequently in this proof:
    \begin{align}
    \begin{split}\label{probT6}
    P\left\{ \forall S \in \mathcal S(s',s_0),~\|\Xi_{S } \|_2^2 > \frac{6\sigma^2 s's_0}{n} \Delta(s',s_0) \right\}
    & = o(1),~\text{ where } s' = \frac{s}{8\Delta^2};\\
    P\left\{ \sum_{(i,j) \in S_{G^*} } \Xi_{ij}^2 \mathrm I\Big\{ |\Xi_{ij}|\ge \tilde \tilde \lambda_1 \Big\} \ge \frac{ \sigma^2 ss_0}{n\Delta}  \right\}
    & = o(1); \\
    P \left( \sum_{(i,j) \in S^* } \tilde \lambda_2^2 \cdot \mathrm I\Big( |\Xi_{ij}|> \frac{\epsilon}2 \tilde \lambda_2 \Big)  
                     \ge \frac{\sigma^2ss_0}{n \Delta} \right)
    & = o(1); \\
    P \left\{ \sum_{j \in G^*} s_0 \tilde \lambda_2^2 \cdot 
                \mathrm I\left( \sum_{k \in S^* \cap S_{G_j}}\Xi_{kj}^2 > \frac{\epsilon^2}4 (s_j \vee s_0) \tilde \lambda_2^2 \right)
                \ge \frac{\sigma^2 ss_0}{n\Delta} \right\}
    & = o(1); \\ 
    P \left( \|\Xi_{S^*} \|_2^2 \ge \frac{2\sigma^2 ss_0}{n} \right) 
    &= o(1),
    \end{split}
    \end{align}
 as $\min\{ \Delta,~ {ss_0}/{\Delta}  \} \to \infty$. Define 
 $
  \Delta(s',s_0):= \frac{1}{s_0}\log\frac{em}{s'} + \log\frac{ed}{s_0}.
 $
 We provide the proof of the above inequalities in Appendix B.

Here we prove Theorem \ref{T6} by mathematical induction. From the assumption, the initial estimator $\tilde \beta^0 = \hat \beta$ is $(2s,\frac32 s_0)$-sparse and minimax optimal. It is easy to check that the three results in Theorem \ref{T6} hold for $t=0$. Now for $\forall t \ge 0$, assume the conclusions in Theorem \ref{T6} hold for the $t$-th iteration, we will prove these hold for the $(t+1)$-th iteration by contradiction and induction.

\subsubsection*{Step 1 (Control falsely discovered groups).} 

Assume that more than $s$ groups are falsely discovered in the $(t+1)$-th iteration. Then, we can always choose arbitrary $s$ falsely discovered groups and construct a $(s ,s_0)$-sparse set $S_{OG}' \in \tilde S^{t+1} \cap S_{G^*}^c$. The details of the selection process can be described as follows:

For any selected group $j \notin G^*$, if it has more than $s_0$ falsely discovered entries, then choose arbitrarily $s_0$ non-zero entries of these falsely discovered entries into $S_{OG}'$; if it has less than $s_0$, then we choose all these falsely discovered entries into $S_{OG}'$. We repeat this operation $s$ times for any $s$ falsely discovered groups, and we obtain a $(s, s_0)$-sparse set $S_{OG}'$.

Then, based on the definition of DSIHT operator $\mathcal{T}_{s_0, \tilde{\lambda}_2}(\cdot)$, for any falsely discovered group $j$ selected into set $S_{OG}'$, we have $\| \tilde \beta_{G_j \cap S_{OG}'}^{t+1} \|_2^2 \ge s_0 \tilde \lambda_{2}^2 $, which yields that
\begin{equation}\label{scaledabsurd}
    \begin{aligned}
    \sqrt{ ss_0 }\tilde \lambda_2 
    \le& \sqrt{ \Upsilon \left( S_{OG}', \tilde\beta^t \right) }
        + \sqrt{\sum_{(i,j) \in S_{OG}'} \Xi_{ij}^2 \mathrm I\Big\{ \mathcal T_{\lambda_{2},s_0} 
    \big(\tilde H^{t+1}\big)_{ij}\ne 0\Big\} }\\
    \overset{(i)}{\le} & \frac72 \delta \|\tilde \beta^t - \beta^* \|_2 + \sqrt{\frac{ \sigma^2 ss_0}{n\Delta} } ,
    \end{aligned}
\end{equation}
where inequality (i) follows Lemma \ref{OGChi2}. From the assumption of mathematical induction, since \eqref{scaledoracle} holds for $t$-iteration, we have $ \left\|\tilde \beta^{t} - \beta^* \right\|_2
            <~  16\sigma \sqrt{ \frac{ ss_0 \Delta}{n} } +   
                16 \sqrt{ \frac{\sigma^2 ss_0}{n} }$. Combining with \eqref{scaledabsurd}, we obtain
\begin{equation}\label{1stabsurd}
    \sqrt{ ss_0 }\tilde \lambda_2 = \sqrt{\frac{ 32\sigma^2 ss_0\Delta}{n} }
< 2.8 \sqrt{\frac{ \sigma^2 ss_0\Delta}{n} } + 3.8 \sqrt{ \frac{ \sigma^2 ss_0}{n} },
\end{equation}
which can not hold when $\Delta > 2.5$. Thus we find the absurd.

We have proved that no more than $s$ groups are falsely discovered in the $(t+1)$-th iteration. Next, we will prove that no more than $ss_0$ entries will be falsely discovered outside true groups $G^*$. If not so, we can construct an $(s,s_0)$-sparse set $S_{OG}'' \in \tilde S^t \cap S_{G^*}^c$. Then we obtain
\begin{equation}\label{step1.5}
    \begin{aligned}
    \sqrt{ ss_0 }\tilde \lambda_2 
    \le& \sqrt{ \Upsilon \left( S_{OG}'', \tilde\beta^t \right)}
        + \sqrt{\sum_{(i,j) \in S_{OG}''} \Xi_{ij}^2 \mathrm I\Big\{ \mathcal T_{\lambda_{2},s_0} 
    \big(\tilde H^{t+1}\big)_{ij}\ne 0\Big\} }\\
    \overset{(i)}{\le} & \frac72 \delta \|\tilde \beta^t - \beta^* \|_2 + \sqrt{\frac{ \sigma^2 ss_0}{n\Delta} } ,
    \end{aligned}
\end{equation}
where inequality (i) follows Lemma \ref{OGChi2}. This leads to a contradiction as \eqref{1stabsurd}.

\subsubsection*{Step 2 (Control falsely discovered entries in $S_{G^*}$).} 

Assume that there are more than $ss_0$ falsely discovered entries within the true groups $G^*$. Then, we can construct a $(s,s_0)$-sparse set $S_{IG} \in S_{G^*} \cap \tilde S^t \cap (S^*)^c$ such that for each entry in $S_{IG}$, $|\tilde \beta_{ij}^{t+1}| = |\Xi_{ij} +\langle\Phi_{ij}^\top, \beta^* - \tilde\beta^{t} \rangle|\ge \tilde \lambda_2$ always holds, which yields that
\begin{equation}\label{step2}
   \begin{aligned}
    \sqrt{ss_0}\tilde \lambda_2
    \le & \sqrt{ \Upsilon \left( S_{IG}, \tilde\beta^t \right) }
        +\sqrt{\sum_{(i,j)\in S_{IG}}\Xi_{ij}^2 \mathrm I\Big\{ |\Xi_{ij}+ \langle\Phi_{ij}^\top, \beta^* - \tilde\beta^{t} \rangle |\ge \tilde \lambda_2 \Big\} }\\
    \le & \delta \|\tilde \beta^t - \beta^* \|_2
        +\sqrt{ \sum_{(i,j) \in S_{G^*}}  \Xi_{ij}^2 \mathrm I\Big\{ |\Xi_{ij}|\ge \tilde \lambda_1 \Big\} }
        \\
        &+\sqrt{\sum_{(i,j)\in S_{IG}}\Xi_{ij}^2 \mathrm I\Big\{ |\Xi_{ij}|<\tilde \lambda_1 < |\langle\Phi_{ij}^\top, \beta^* - \tilde\beta^{t} \rangle| \Big\} }\\
    \overset{(i)}{<}& 2\delta \|\tilde \beta^t - \beta^* \|_2 + \sqrt{\frac{ \sigma^2 ss_0}{n\Delta} },
    \end{aligned}
\end{equation}
where inequality (i) follows Lemma \ref{IGChi2}. Since $\eqref{scaledoracle}$ holds for $t$-th iteration, it leads to a contradiction as \eqref{1stabsurd} again.

\subsubsection*{Step 3 ($\ell_2$ estimation error of $\tilde \beta^{t+1}$).} 

Now we have already proved the first two conclusions in Theorem \ref{T6} still hold in the $(t+1)$-th iteration, and then we will prove the third one also holds for $(t+1)$-th iteration. Note that
\begin{equation}
    \tilde \beta_{ij}^{t+1} - \beta_{ij}^* = - \tilde H_i^{t+1}  \cdot \mathrm I \left((i,j) \notin \tilde S^{t+1} \right) + \langle\Phi_{kj}^\top, \beta^* - \tilde\beta^{t} \rangle  + \Xi_{ij}
\end{equation}

We now focus on the estimation error on $S^*$ and $\tilde S^{t+1} \cap (S^*)^c$ respectively. On $S^*$, we have
\begin{equation}\label{S*}
    \begin{aligned}
    \|\tilde \beta_{S^*}^{t+1} - \beta_{S^*}^* \|_2 
    \le& \sqrt{ \sum_{(i,j) \in S^*} \left( \tilde H_{ij}^{t+1} \right)^2 \mathrm I \left( (i,j) \notin \tilde S^{t+1} \right)}
        + \sqrt{ \Upsilon \left( S^*, \tilde\beta^t \right) }
        +  \sqrt{ \sum_{(i,j) \in S^*} \Xi_{ij}^2 }\\
    \overset{(i)}{\le} & \frac4\epsilon \delta \left\| \tilde\beta^t - \beta^* \right\|_2 + 2\sqrt{\frac{\sigma^2ss_0}{n \Delta}} 
        + \sqrt{ \Upsilon \left( S^*, \tilde\beta^t \right) } 
        + \sqrt{ \sum_{(i,j) \in S^*} \Xi_{ij}^2 }\\
    \overset{(ii)}{\le} & \left( \frac4\epsilon+1 \right) \delta \left\| \tilde\beta^t -\beta^* \right\|_2
        + 2\sqrt{\frac{\sigma^2ss_0}{n \Delta}} + \sqrt{ \frac{2\sigma^2ss_0}{n } },
    \end{aligned}
\end{equation}
where inequality (i) uses the result of Lemma \ref{supporterror}.
Inequality (ii) uses Lemma \ref{lemma:iht1} and Theorem 2.1 in \citet{hsu2012tail}, that is, for a fixed set $S^* \in \mathcal S^{m,d}(s,s_0)$ and every $t>0$, we have
\begin{equation}
    P \left( \frac n{\sigma^2} \|\Xi_{S^*} \|_2^2 \ge ss_0 + 2(1+\delta)\sqrt{ss_0 t} + 2(1+ \delta)t \right) \le e^{-t}.
\end{equation}
Let $t = \frac{ss_0}{10}$. Based on $\delta < \frac15$, we obtain that $P \left( \|\Xi_{S^*} \|_2^2 \ge \frac{2\sigma^2 ss_0}n \right) \le \exp\left(-\frac{ss_0}{10}\right) = o(1)$ as $ss_0 \to \infty$. 

On $\tilde S^{t+1} \cap (S^*)^c$, when $X$ satisfies DSRIP$(3s,\frac53 s_0, \delta )$, we have 
\begin{equation}\label{exceptS*}
    \begin{aligned}
    &\left\|\tilde \beta_{\tilde S^{t+1} \cap (S^*)^c}^{t+1} - \beta_{\tilde S^{t+1} \cap (S^*)^c}^* \right\|_2 = \left\|\tilde \beta_{\tilde S^{t+1} \cap (S^*)^c}^{t+1} \right\|_2\\
    \le & \sqrt{ \Upsilon \left( \tilde S^{t+1} \cap (S^*)^c, \tilde \beta^t \right) }
         + \sqrt{\sum_{(i,j) \in \tilde S^{t+1} \cap (S^*)^c} \Xi_{ij}^2 \mathrm I \left( T_{\lambda_{2},s_0} \big(\tilde H^{t+1}\big)_{ij}\ne 0\right) }\\
    \le & \delta \left\| \tilde\beta^t -\beta^* \right\|_2 
         + \sqrt{\sum_{(i,j) \in S_{OG}} \Xi_{ij}^2 \mathrm I \left( T_{\lambda_{2},s_0} \big(\tilde H^{t+1}\big)_{ij}\ne 0\right) }
         + \sqrt{\sum_{(i,j) \in S_{IG}} \Xi_{ij}^2 \mathrm I \left( T_{\lambda_{2},s_0} \big(\tilde H^{t+1}\big)_{ij}\ne 0\right) }\\
    \overset{(i)}{<} & \frac92 \delta \left\| \tilde\beta^t -\beta^* \right\|_2 
         + 2\sqrt{\frac{ \sigma^2 ss_0}{n\Delta} } ,
    \end{aligned}
\end{equation}
where inequality (i) follows from Lemma \ref{OGChi2}, \eqref{step2} and Lemma \ref{IGChi2}.

Finally, based on \eqref{S*} and \eqref{exceptS*}, we have 
\begin{equation}
    \begin{aligned}
    \left\|\tilde \beta^{t+1} - \beta^* \right\|_2 
    \le &~ \left\|\tilde \beta_{S^*}^{t+1} - \beta_{S^*}^* \right\|_2 
         + \left\|\tilde\beta_{\tilde S^{t+1}\cap(S^*)^c}^{t+1} - \beta_{\tilde S^{t+1}\cap(S^*)^c}^* \right\|_2\\
    \le & \left( \frac4\epsilon+ \frac{11}{2} \right) \delta \left\| \tilde\beta^t -\beta^* \right\|_2  + 4\sqrt{\frac{\sigma^2ss_0}{n \Delta}} + \sqrt{ \frac{2\sigma^2ss_0}{n } }\\
    \overset{(i)}{<} & \left( \frac4\epsilon+ \frac{11}{2} \right) \delta \left\| \tilde\beta^t -\beta^* \right\|_2  + 4\sqrt{ \frac{\sigma^2ss_0}{n } },
    \end{aligned}
\end{equation}
where in inequality (i) we assume $\Delta>2.5$, which leads $\frac{4}{\sqrt{\Delta}} + \sqrt{2} < 4$.

Then, based on the initialized inequality $\left\| \tilde \beta^0 - \beta^* \right\|_2 \le 16 \sigma \sqrt{ \frac{ ss_0 \Delta}{n} }$ and $ \delta \le \epsilon^4 \wedge 0.05$, we have $\delta \left(\frac4\epsilon + \frac{11}{2} \right) \le \frac34$, which leads 
\begin{equation}
     \left\|\tilde \beta^{t+1} - \beta^* \right\|_2
<~  16\left(\frac34 \right)^{t+1} 
    \sigma \sqrt{ \frac{ ss_0 \Delta}{n} } +   
    16 \sqrt{ \frac{\sigma^2 ss_0}{n} }.
\end{equation}
Consequently, we prove that the conclusions in Theorem \ref{T6} hold for the $(t+1)$-th iteration, which completes the proof.

\subsection*{ proof of Theorem \ref{T7} }

Under the conditions of Theorem \ref{T6}, note that the probability inequalities in \eqref{probT6} still hold.

\subsubsection*{Step 1 (Sharp upper bound).} 

Let $t > 2 \log\left( 256\Delta \right) $ and we have
\begin{equation}
    16 \left(\frac34 \right)^{t} \sigma \sqrt{ \frac{ ss_0 \Delta}{n} }
    <  \sqrt{ \frac{\sigma^2 ss_0 }{n} }.
\end{equation}
From \eqref{scaledoracle}, we have
\begin{equation}
    \left\|\tilde \beta^{t} - \beta^* \right\|_2
    <~\sqrt{ \frac{\sigma^2 ss_0}{n} } + 16\sqrt{ \frac{\sigma^2 ss_0}{n} }
    = 17\sqrt{ \frac{\sigma^2 ss_0}{n} }.
\end{equation}

\subsubsection*{Step 2 (Group-wise almost full recovery).}

Note that based on the first conclusion of Theorem \ref{T6}, no more than $s$ groups are falsely discovered in the $(t+1)$-th iteration. Denote $G_{FD}^{t+1}$ as the falsely discovered group index set in the $(t+1)$-th iteration, which satisfies $ \left | G_{FD}^{t+1}  \right|< s$. Then, we have
\begin{equation}\label{AFRgroup}
    \begin{aligned}
     \| \tilde \eta_{G}^{t+1} - \eta^*_{G} \|_0
     =& \sum_{j=1}^m | (\tilde \eta^{t+1}_G )_j - (\eta^*_G)_j  | \\
       =& \sum_{j\in G^*}  | (\tilde \eta^{t+1}_G )_j - 1 | 
            +  \sum_{j \in G_{FD}^{t+1} } | (\tilde \eta^{t+1}_G )_j -0 |\\ 
       =& \sum_{j\in G^*} \mathrm I \left( \mathcal T_{\tilde\lambda_{2},s_0} \big(\tilde H^{t+1}_{G_j} \big) = \mathbf 0  \right)
           + \sum_{j \in  G_{FD}^{t+1} } \mathrm I \left( \mathcal T_{\tilde \lambda_2,s_0} 
            \big(\tilde H^{t+1}_{G_j} \big) \ne \mathbf 0  \right).
    \end{aligned}
\end{equation}
For the first term in \eqref{AFRgroup}, based on Lemma \ref{supporterror}, we have
\begin{equation}\label{AFRgroup1}
    \begin{aligned}
    &\sum_{j\in G^*} \mathrm I \left( \mathcal T_{\tilde \lambda_2,s_0} \big(\tilde H^{t+1}_{G_j} \big) = \mathbf 0  \right)\\
    \le & \sum_{j\in G^*} \mathrm I\left( \sum_{k \in S_{G_j} \cap S^* }\left( \tilde H_{kj}^{t+1}\right)^2 
         \mathrm I \left(|\tilde H_{kj}^{t+1}| \ge \tilde \lambda_2 \right)< s_0 \tilde \lambda_2^2 \right)\\
    \le & \sum_{j\in G^*} \mathrm I\left( \sum_{k \in S_{G_j} \cap S^* }\left( \tilde H_{kj}^{t+1}\right)^2 < (s_0+s_j) \tilde \lambda_2^2 \right)\\
    \overset{(i)}{\le} & \sum_{j\in G^*} \mathrm I \left( 
    \Upsilon \left( S_{G_j} \cap  S^*, \tilde\beta^t \right)    
    > \frac{\epsilon^2}4 (s_j \vee s_0) \tilde \lambda_2^2 \right)
         +\sum_{j\in G^*} \mathrm I \left( \sum_{k \in S_{G_j} \cap S^*} \Xi_{kj}^2 > \frac{\epsilon^2}4 (s_j \vee s_0) \tilde \lambda_2^2 \right)\\
    \overset{(ii)}{\le}  & \frac{4\delta^2 \left\|\tilde \beta^{t} - \beta^* \right\|_2^2}{\epsilon^2 s_0 \tilde \lambda_2^2}
         + \frac{\sigma^2 s}{ n\tilde \lambda_2^2 \Delta } 
    \\
    \precsim& \frac{s}{\Delta}+ \frac{s}{\Delta^2 } \\
    = & O\left(\frac{s}{\Delta}\right),~\text{ as } \Delta \to \infty,
    \end{aligned}
\end{equation}
where inequality (i) follows from \eqref{supportgrouptype2} and inequality (ii) follows from \eqref{xivee}.

For the second term in \eqref{AFRgroup}, based on Lemma \ref{OGChi2}, we have
\begin{equation}\label{AFRgroup2}
    \begin{aligned}
    & \sum_{j \in  G_{FD}^{t+1} } \mathrm I \left( \mathcal T_{\tilde \lambda_2,s_0} \big(\tilde H^{t+1}_{G_j} \big) \ne \mathbf 0 \right)\\
    \le & \sum_{j \in  G_{FD}^{t+1} } \mathrm I \left( \mathcal T_{\tilde \lambda_1,s_0} \big( \Xi_{\tilde S^{t+1} \cap S_{G_j}}\big) \ne \mathbf 0 \right)
         + \sum_{j \in  G_{FD}^{t+1} } \mathrm I \left( \mathcal T_{\tilde\lambda_{1},s_0} \big( \Xi_{\tilde S^{t+1} \cap  S_{G_j} }\big)=\mathbf 0,
         ~ \mathcal T_{\tilde \lambda_2,s_0} \big(\tilde H^{t+1}_{G_j} \big) \ne \mathbf 0 \right)\\
    \overset{(i)}{\le} & \frac{s}{8\Delta^2 } + 
         \sum_{j \in  G_{FD}^{t+1} } \mathrm I\left( \sum_{ k \in \tilde S^{t+1} \cap S_{G_j} } \Xi_{kj}^2 \mathrm I (|\Xi_{kj}|\ge \tilde \lambda_1)< s_0 \tilde \lambda_1^2,~
                 \mathcal T_{\tilde \lambda_2,s_0} \big(\tilde H^{t+1}_{G_j}\big)\ne \mathbf 0\right)\\
    \overset{(ii)}{\le} & \frac{s}{8\Delta^2 } + \sum_{j \in  G_{FD}^{t+1} } \mathrm I\left( s_0 \tilde \lambda_1^2 \le  2
    \Upsilon \left( \tilde S^{t+1} \cap S_{G_j}, \tilde \beta^t \right)
    \right)\\
    \precsim & \frac{s}{ \Delta^2} + \frac{s}{\Delta}  
    = O\left(\frac{s}{\Delta}\right),~\text{ as } \Delta \to \infty,
    \end{aligned}
\end{equation}
where inequality (i) follows from a similar contradiction in the proof of the first term in Lemma \ref{OGChi2}, and inequality (ii) follows from the result of \eqref{OGgroupscaled}.

Combining \eqref{AFRgroup}, \eqref{AFRgroup1} and \eqref{AFRgroup2} together, we prove that $\| \tilde \eta_{G}^{t+1} - \eta^*_{G} \|_0 = O\left(\frac{s}{\Delta}\right)$.

\subsubsection*{Step 3 (Element-wise almost full recovery).}

Based on the first two conclusions of Theorem \ref{T6}, we have
\begin{equation}\label{AFRentry}
    \begin{aligned}
     \| \tilde \eta^{t+1} - \eta^* \|_0
    =& \sum_{j=1}^m \sum_{i=1}^d | \tilde \eta_{ij}^{t+1} - \eta_{ij}^* | \\
    = & \sum_{(i,j)\in S^*} | \tilde \eta_{ij}^{t+1} -1| 
         + \sum_{(i,j)\in S_{G^*} \cap (S^*)^c \cap \tilde S^{t+1}} | \tilde \eta_{ij}^{t+1} -0| 
         + \sum_{(i,j)\in S_{G^*}^c \cap \tilde S^{t+1}} | \tilde \eta_{ij}^{t+1} -0|. 
    \end{aligned}
\end{equation}
We can just analyze these three terms respectively. For the first one, note that
\begin{equation}\label{AFRentry1}
    \begin{aligned}
    \sum_{(i,j)\in S^*} | \tilde \eta_{ij}^{t+1} -1|
    =& \sum_{(i,j)\in S^*} \mathrm I\left( (i,j) \notin \tilde S^{t+1}\right) \\
    \overset{(i)}{\le}&  \sum_{(i,j)\in S^*} \mathrm I\left( |\tilde H_{ij}^{t+1}| < \tilde \lambda_2 \right)\\
        & + \frac1{\tilde \lambda_2^2} \sum_{(i,j)\in S^*} \left(\tilde H_{ij}^{t+1} \right)^2 \mathrm I\left( |\tilde H_{ij}^{t+1}| \ge \tilde \lambda_2, ~
           \sum_{k \in S_{G_j} \cap S^*} \left( \tilde H_{kj}^{t+1}\right)^2  < (s_0+ s_j) \tilde \lambda_2^2 \right)\\
    \overset{(ii)}{\le} & \frac{16\delta^2 \left\|\tilde \beta^{t} - \beta^* \right\|_2^2}{\epsilon^2 \tilde \lambda_2^2}
         + \frac{4\sigma^2 ss_0}{ n\tilde \lambda_2^2 \Delta } 
    \precsim ~ \frac{ss_0}{\Delta} + \frac{ss_0}{\Delta^2 }\\
    =& O\left(\frac{ss_0}{\Delta}\right),~\text{ as } \Delta \to \infty,
    \end{aligned}
\end{equation}
where inequality (i) follows from the first inequality of \eqref{supportH} in Lemma \ref{supporterror}, and inequality (ii) follows from the last inequality of \eqref{supportH}, \eqref{supportH1} and \eqref{supportH2}.

For the second term, we obtain
\begin{equation}\label{AFRentry2}
    \begin{aligned}
    \sum_{(i,j)\in S_{G^*} \cap (S^*)^c \cap \tilde S^{t+1} } | \tilde \eta_{ij}^{t+1} -0|
         \le& \sum_{(i,j)\in S_{G^*} \cap (S^*)^c \cap \tilde S^{t+1}} \mathrm I \left( \left|\tilde H_{ij}^{t+1}\right| \ge \tilde \lambda_2 \right)\\
    \overset{(i)}{\le} & \sum_{(i,j)\in S_{G^*} } \mathrm I \left( \left|\Xi_{ij} \right| \ge \tilde\lambda_1 \right) 
         \\
         &+\sum_{(i,j)\in S_{G^*} \cap (S^*)^c \cap \tilde S^{t+1} } \mathrm I \big( |\Xi_{ij}| < \tilde\lambda_1 < |\langle\Phi_{ij}^\top, \beta^* - \tilde\beta^{t} \rangle| \big) \\
    \overset{(ii)}{\le} &~  \frac{\sigma^2 ss_0}{n \tilde \lambda_1^2 \Delta}  + \frac{\delta^2 \left\|\tilde \beta^{t} - \beta^* \right\|_2^2}{\tilde\lambda_1^2}
    \precsim ~ \frac{ss_0}{\Delta^2(s,s_0)} + \frac{ss_0}{\Delta}\\
    =& O\left(\frac{ss_0}{\Delta}\right),~\text{ as } \Delta \to \infty,
    \end{aligned}
\end{equation}
where inequality (i) follows from \eqref{step2} in Theorem \ref{T6}, inequality (ii) follows from the probability inequality \eqref{IGFDentry} in Lemma \ref{IGChi2} and $\sum_{(i,j) \in S_{G^*} } \tilde \lambda_1^2 \mathrm I\Big\{ |\Xi_{ij}|\ge \tilde \lambda_1 \Big\} \le \sum_{(i,j) \in S_{G^*} } \Xi_{ij}^2 \mathrm I\Big\{ |\Xi_{ij}|\ge \tilde \lambda_1 \Big\}$.

For the third term, we obtain
\begin{equation}\label{AFRentry3}
    \begin{aligned}
    & \sum_{(i,j)\in S_{G^*}^c \cap \tilde S^{t+1}} | \tilde \eta_{ij}^{t+1} -0|
     = \sum_{(i,j)\in S_{G^*}^c \cap \tilde S^{t+1} } 
         \mathrm I \left( \mathcal T_{\tilde \lambda_2,s_0} \big(\tilde H^{t+1}\big)_{ij}\ne 0 \right)\\
    \overset{(i)}{\le} & \sum_{(i,j)\in S_{G^*}^c \cap \tilde S^{t+1} } \mathrm I\Big\{\mathcal T_{\tilde \lambda_1,s_0} 
                \big(\Xi_{S_{OG}}\big)_{ij}\ne 0 \Big\} 
         +\sum_{(i,j)\in S_{G^*}^c \cap \tilde S^{t+1} } \mathrm I\Big\{|\Xi_{ij}|< \tilde \lambda_1,
                 ~ |\Xi_{ij}+ \langle\Phi_{ij}^\top, \beta^* - \tilde\beta^{t} \rangle | \ge \tilde \lambda_2 \Big\}  \\ 
         & ~~+\frac1{\tilde \lambda_1^2} \sum_{(i,j)\in S_{G^*}^c \cap \tilde S^{t+1} }\Xi_{ij}^2\mathrm I\left\{|\Xi_{ij}|\ge \tilde \lambda_1,
                ~ \sum_{k \in S_{G_j} \cap \tilde S^{t+1} } \Xi_{kj}^2 \mathrm I (|\Xi_{kj}|\ge \tilde \lambda_1)< s_0\tilde \lambda_1^2,~
                  \mathcal T_{\tilde \lambda_2,s_0} \big(\tilde H^{t+1}\big)_{ij}\ne 0\right\}  \\
    \overset{(ii)}{<} & s's_0 + \frac{\delta^2 \left\|\tilde \beta^{t} - \beta^* \right\|_2^2}{\tilde \lambda_1^2}
         + \frac{2\delta^2 \left\|\tilde \beta^{t} - \beta^* \right\|_2^2}{\tilde \lambda_1^2}\\
    \precsim &  \frac{ss_0}{\Delta^2 } + \frac{ss_0}{\Delta} 
    = O\left(\frac{ss_0}{\Delta}\right),~\text{ as } \Delta \to \infty,
    \end{aligned}
\end{equation}
where inequality (i) follows from \eqref{ogxi} in Lemma \ref{OGChi2}, and inequality (ii) follows from the framework of the first term in Lemma \ref{OGChi2}, \eqref{OGgroupscaled} and \eqref{GroupXifailHsuccess}, and recall $s' = \frac{ s}{8\Delta^2}$.

Combining \eqref{AFRentry}, \eqref{AFRentry1}, \eqref{AFRentry2} and \eqref{AFRentry3} together, we prove that $\| \tilde \eta^{t+1} - \eta^* \|_0 = O\left(\frac{ss_0}{\Delta}\right)$.

\section*{Appendix B : Auxiliary lemmas}

\begin{lemma}\label{lem:sigma}
Assume that $X$ satisfies $\mbox{DSRIP}(2s, \frac{3}{2}s_0,\frac{\delta}{2})$. 
Then, with probability at least  $1-\exp\left\{-Css_0\Delta\right\}$, we have
\begin{equation*}
  | \sigma_t - \sigma| \leq \sqrt{1+\delta}\|\beta^* - \beta^t\|_2+\frac{1}{20}\sigma.
\end{equation*}
\end{lemma}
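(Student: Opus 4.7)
The plan is to start from the identity $y - X\beta^t = X(\beta^* - \beta^t) + \xi$, so that
\[
 \sigma_t = \tfrac{1}{\sqrt{n}}\|X(\beta^* - \beta^t) + \xi\|.
\]
A first application of the triangle inequality in $\mathbb{R}^n$ gives
\[
 \bigl|\sigma_t - \tfrac{\|\xi\|}{\sqrt{n}}\bigr| \le \tfrac{1}{\sqrt{n}}\|X(\beta^t - \beta^*)\|,
\]
and a second application reduces the desired bound to controlling the two terms $\tfrac{1}{\sqrt{n}}\|X(\beta^t - \beta^*)\|$ and $|\tfrac{\|\xi\|}{\sqrt{n}} - \sigma|$ separately.

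For the first (deterministic, given the iteration) term, I would invoke Theorem~\ref{th1}, which guarantees that $\beta^t - \beta^*$ is $(2s,\tfrac{3}{2}s_0)$-sparse. The hypothesis $\mathrm{DSRIP}(2s,\tfrac{3}{2}s_0,\delta/2)$ together with Lemma~\ref{lem:rip} then yields $\lambda_{\max}(X_S^\top X_S) \le n(1+\delta)$ for the relevant support $S$, so
\[
 \tfrac{1}{\sqrt{n}}\|X(\beta^t - \beta^*)\| \le \sqrt{1+\delta}\,\|\beta^t - \beta^*\|.
\]
This recovers the first term of the stated bound exactly.

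For the second (purely stochastic) term, I would rely on concentration of the noise norm around its mean. For sub-Gaussian $\xi$ with scale $\sigma^2$, $\|\xi\|^2$ concentrates around $n\sigma^2$: a Hanson--Wright (or, if $\xi$ is Gaussian, a Laurent--Massart) tail bound yields
\[
 P\!\left(\Bigl|\tfrac{\|\xi\|^2}{n} - \sigma^2\Bigr| \ge u\sigma^2\right) \le 2\exp(-c n u^2)
\]
for $u \in (0,1)$. Choosing $u$ a small enough constant (e.g.\ $u = 1/40$) and using $|\tfrac{\|\xi\|}{\sqrt{n}} - \sigma| \le \tfrac{|\|\xi\|^2/n - \sigma^2|}{\sigma}$ (valid since $\|\xi\|/\sqrt{n} + \sigma \ge \sigma$) gives $|\tfrac{\|\xi\|}{\sqrt{n}} - \sigma| \le \tfrac{1}{20}\sigma$ with probability at least $1 - \exp(-c' n)$. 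Since $n \gtrsim ss_0\Delta$ in the regime of interest, this probability is at least $1 - \exp(-C ss_0 \Delta)$, matching the probability bound claimed.

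The main obstacle is only bookkeeping: ensuring the constant $\tfrac{1}{20}$ is compatible with the chosen concentration deviation $u$, and that the probability exponent translates correctly to $ss_0\Delta$ rather than $n$. Combining the two pieces via the triangle inequality then delivers $|\sigma_t - \sigma| \le \sqrt{1+\delta}\,\|\beta^t - \beta^*\| + \tfrac{1}{20}\sigma$ on an event of probability at least $1 - \exp(-C ss_0 \Delta)$, which is the claim.
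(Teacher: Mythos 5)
Your proposal is correct and follows essentially the same route as the paper: the same triangle-inequality decomposition into $|\sigma_t - \sigma\|\xi\|/(\sigma\sqrt{n})|$ and the noise-norm deviation, the same use of DSRIP to bound $\tfrac{1}{\sqrt{n}}\|X(\beta^t-\beta^*)\| \le \sqrt{1+\delta}\,\|\beta^t-\beta^*\|$ on the $(2s,\tfrac{3}{2}s_0)$-sparse difference, and the same Hanson--Wright concentration of $\|\xi\|$ with the exponent $e^{-Cn} \le e^{-Css_0\Delta}$. The only difference is cosmetic bookkeeping of the deviation constant, which the paper handles by fixing the event $\{|\|\xi\|-\sqrt{n}|\le \tfrac{1}{20}\sqrt{n}\}$ directly.
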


\begin{proof}

  Denote event  $\mathcal{A} = \{|\frac{\|\xi\|_2}{\sigma}-\sqrt{n}| \leq \frac{1}{20}\sqrt{n}\}$.
  From Hanson-Wright inequality \citep{10.1214/ECP.v18-2865}, it holds that $P(\mathcal{A}) \geq 1-e^{-Cn}\geq 1 - e^{-C ss_0\D}$.
  Therefore, 
  \begin{align*}
    | \sigma_t - \sigma|&\leq| \sigma_t - \frac{\|\xi\|_2}{\sqrt{n}}|+|\frac{\|\xi\|_2}{\sqrt{n}}-\sigma|\\
    &\le \frac{1}{\sqrt{n}}\left|\|X(\beta^* - \beta^t)+ \xi\|_2-\|\xi\|_2\right| + \frac{1}{20}\sigma\\
    &\le \sqrt{1+\delta}\|\beta^* - \beta^t\|_2 + \frac{1}{20}\sigma,
  \end{align*}
  where the second inequality follows from event $\mathcal{A}$, and the last inequality follows from DSRIP condition.
    
\end{proof}

To control the inner product between $\xi$ and $X\left(\hat \beta - \beta^* \right)$, we provide a useful lemma.

\begin{lemma}\label{lem:inner}
Given integers $v_1, v_2 > 0$,    and assume that $\beta$ is a $(v_1,v_2/v_1)$-sparse vector. we have 
  \begin{equation}\label{orthogonal}
    P\left(\sup\limits_{\beta \in \Theta^{m,d}(v_1, \frac{v_2}{v_1})}
    \left|	\left\langle\xi,\frac{X\beta}{\|X\beta\|_2}\right\rangle\right|^2
    \ge 3 \sigma^2(v_1 \log \frac{em}{v_1}+v_2 \log\frac{ed v_1}{v_2}) \right) 
    \le  e^{-C\left(v_1 \log \frac{em}{v_1}+v_2 \log\frac{ed v_1}{v_2}\right)}.
\end{equation}
In specific, if $\beta^*$ is a $(s, s_0)$-sparse vector and $\hat{\beta} \in \Theta^{m,d}(\hat{s}, \frac{\hat{A}}{\hat{s}})$, i.e., $\|\hat{\beta}\|_{0} \le \hat{A}$ and $\|\hat{\beta}\|_{0,2} \le \hat{s}$, 
we have

\begin{equation}\label{orthogonal2}
        P\left(\sup\limits_{\hat{\beta} \in \Theta^{m,d}(\hat{s}, \frac{\hat{A}}{\hat{s}})}
    \left|	\left\langle\xi,\frac{X(\hat{\beta}-\beta^*)}{\|X(\hat{\beta}-\beta^*)\|_2}\right\rangle\right|^2
    \ge 3 \sigma^2\Omega^*(\hat \beta) \right) 
    \le  e^{-C\Omega^*(\hat\beta)},
\end{equation}
where $\Omega^*(\hat\beta)$ is defined at the beginning of the Appendix.
\end{lemma}

\begin{proof}
For a fixed set $S$ satisfies $\text{supp}(\beta) \subseteq S$, denote $X_S$ as the span space of columns of $X$ indexed by $S$, thus we have $\langle \xi, X\beta \rangle = \langle \xi, X_S\beta_S \rangle$. Denote $\pi_S = X_S(X_S^{\top}X_S)^{-1}X_S^{\top} \in \mathbb R^{n \times n}$, which is an orthogonal matrix of rank no more than $|S|$.  Therefore, for $\forall \beta \in  \Theta^{m,d}(v_1, \frac{v_2}{v_1})$, we obtain the following by Cauchy-Schwartz inequality:
  
  \begin{align}\label{innereq1}
    \begin{split}
    \left|	\left\langle\xi,\frac{X\beta}{\|X\beta\|_2}\right\rangle\right| 
 =      \left|	\left\langle \pi_S\xi,\frac{X_S\beta_S}{\|X_S\beta_S\|_2}\right\rangle\right| \le \left\| \pi_S \xi \right\|_2
  \le \sup\limits_{S \in \mathcal S^{m,d}\left(v_1, \frac{v_2}{v_1}\right)} \left\| \pi_S \xi \right\|_2,
  \end{split}
  \end{align}
Note that for $\forall S \in \mathcal S^{m,d}\left(v_1, \frac{v_2}{v_1}\right)$, we have $rank\big(\pi_S \big) \le  v_2$, so that $Tr(\pi_S) \le rank\big(\pi_S \big) \cdot \|\pi_S\|_2 \le v_2$. Thus by Theorem 2.1 of \citet{hsu2012tail}, for $\forall t >0$, we have
  \begin{equation}\label{innereq2}
    P\left(\frac{\|\pi_S \xi\|_2^2}{\sigma^2} \geq \frac{5}{2}t\right)
    \leq P\left(\frac{\|\pi_S \xi\|_2^2}{\sigma^2} \geq v_2 + 2\sqrt{ v_2 t} + 2t\right)
    \leq  e^{-t},
  \end{equation}
  where the first inequality holds when $t \gg  v_2$.

  Similarly to \eqref{lem1eq6}, we have $\left| S^{m,d}\left(v_1, \frac{v_2}{v_1}\right) \right| \le \left( \frac{em}{ v_1} \right)^{v_1} \times \left(\frac{edv_1}{v_2} \right)^{v_2}$, thus by \eqref{innereq2} we get a union bound as:
$$
 P\left( \sup\limits_{S \in \mathcal S^{m,d}\left(v_1, \frac{v_2}{v_1}\right)} \left\| \pi_S \xi \right\|_2^2 \ge 3 \sigma^2 \left(v_1 \log \frac{em}{v_1}+v_2 \log\frac{ed v_1}{v_2}\right) \right)
 \leq  e^{-C\left(v_1 \log \frac{em}{v_1}+v_2 \log\frac{ed v_1}{v_2}\right)}.
$$
Let $t = (1+C)(v_1\log \frac{em}{v_2}+v_1+\log \frac{e d v_1}{v_2})$ for some constant $0<C<\frac{1}{5}$, which satisfies $t \gg  v_2$. We complete \eqref{orthogonal}. 

For \eqref{orthogonal2}, for any $\hat{\beta} \in \Theta^{m,d}(\hat{s}, \frac{\hat{A}}{\hat{s}})$, combined with $\beta^* \in \Theta^{m,d}(s, s_0)$, so we have:
$$
\hat{\beta} -\beta^* \in \Theta^{m,d}\left(\hat{s}+s, \frac{ss_0+\hat{A}}{\hat{s}+s}\right).
$$
We let $v_1 = \hat{s}+s$ and $v_2 = ss_0+\hat{A}$, we obtain the \eqref{orthogonal2} directly by \eqref{orthogonal}.
\end{proof}

For ease of display, in the next three lemmas, we use double index $(i,j)$ to denote the $i$-th entry (variable) in the $j$-th group $G_j$.
Besides, we recall the abbreviation $\Upsilon(A, \tilde \beta^t) =  \sum_{(i,j) \in A} \langle \Phi_{ij}^\top, \beta^*-\tilde \beta^{t}\rangle^2 $, $\Delta = \frac{1}{s_0}\log(em/s)+ \log(ed/s_0)$ and $\tilde\lambda_a = a  \sqrt{ \frac{8\sigma^2}{n} \left(\log \frac{ed}{s_0}+\frac{1}{s_0}\log \frac{em}{s} \right)  }$. Denote $S_{OG}:= \tilde S^{t+1} \cap S_{G^*}^c$

Firstly, to bound the $\ell_2$ norm of the selected entries of $\Xi$ in $S_{OG}$, we give the following lemma.
\begin{lemma}\label{OGChi2}
Assume all the conditions in Theorem \ref{T6} hold. For $\forall t \ge 0$, as $\Delta, \frac{ss_0}{\Delta} \to \infty$, we have 
\begin{equation} 
    P\left( \sqrt{ \sum_{(i,j) \in S_{OG}} \Xi_i^2 \mathrm I\Big\{ \mathcal T_{\tilde \lambda_2,s_0} \big(\tilde H^{t+1}\big)_{ij}\ne 0\Big\} }
    <  \sqrt{\frac{ \sigma^2 ss_0}{n\Delta} } + \frac52 \delta \|\tilde \beta^t - \beta^* \|_2 \right) \to 1.
\end{equation}
\end{lemma}
\begin{proof}
    Note that 
    \begin{equation}\label{ogxi}
        \begin{aligned}
         & \sqrt{ \sum_{(i,j) \in S_{OG}} \Xi_{ij}^2 \mathrm I\Big\{ \mathcal T_{\tilde \lambda_2,s_0} 
        \big(\tilde H^{t+1}\big)_{ij}\ne 0\Big\} } \\
        \le & \sqrt{ \sum_{(i,j) \in S_{OG}} \Xi_{ij}^2 \mathrm I\Big\{\mathcal T_{\tilde \lambda_{1},s_0} 
                    \big(\Xi_{S_{OG}}\big)_{ij}\ne 0,~ \mathcal T_{\tilde \lambda_2,s_0} \big(\tilde H^{t+1}\big)_{ij}\ne 0\Big\} }\\
        &~~+ \sqrt{ \sum_{(i,j) \in S_{OG}}  \Xi_{ij}^2 \mathrm I\Big\{\mathcal T_{\tilde \lambda_1,s_0} 
            \big(\Xi_{S_{OG}}\big)_{ij}= 0,~ \mathcal T_{\tilde \lambda_2,s_0} \big(\tilde H^{t+1}\big)_{ij}\ne 0\Big\} }\\
        \le & \sqrt{ \sum_{(i,j) \in S_{OG}} \Xi_{ij}^2 \mathrm I\Big\{\mathcal T_{\tilde \lambda_1,s_0}  \big(\Xi_{S_{OG}}\big)_{ij}\ne 0,~ \mathcal T_{\tilde \lambda_2,s_0} \big(\tilde H^{t+1}\big)_{ij}\ne 0\Big\} }\\
         & ~~  + \sqrt{ \sum_{(i,j) \in S_{OG}} \Xi_{ij}^2\mathrm I\Big\{|\Xi_{ij}|< \tilde \lambda_1,
             ~ \mathcal T_{\tilde \lambda_2,s_0} \big(\tilde H^{t+1}\big)_{ij}\ne 0\Big\} }\\ 
         & ~~  + \sqrt{ \sum_{(i,j) \in S_{OG}} \Xi_{ij}^2\mathrm I\left\{|\Xi_{ij}|\ge \tilde \lambda_1,
            ~ \sum_{k \in S_{G_j} \cap S_{OG}} \Xi_{kj}^2 \mathrm I (|\Xi_{kj}|\ge \tilde \lambda_1)< s_0\tilde \lambda_1^2,~
              \mathcal T_{\tilde \lambda_2,s_0} \big(\tilde H^{t+1}\big)_{ij}\ne 0\right\} }\\ 
        \le & \sqrt{ \sum_{(i,j) \in S_{OG}}  \Xi_{ij}^2 \mathrm I\Big\{\mathcal T_{\tilde \lambda_1,s_0} 
            \big(\Xi_{S_{OG}}\big)_{ij}\ne 0 \Big\} } 
        + \sqrt{ \sum_{(i,j) \in S_{OG}}  \Xi_{ij}^2\mathrm I\Big\{|\Xi_{ij}|< \tilde \lambda_1,
             ~ |\Xi_{ij}+ \langle\Phi_{ij}^\top, \beta^* - \tilde\beta^{t} \rangle| \ge \tilde \lambda_2 \Big\} }\\ 
         & ~~  + \sqrt{\sum_{j \in \tilde G^{t+1} \cap (G^*)^c} s_0 \tilde \lambda_1^2 \cdot \mathrm I\left\{ 
             \sum_{k \in S_{G_j} \cap S_{OG}} \Xi_{kj}^2 \mathrm I (|\Xi_{kj}|\ge \tilde \lambda_1)< s_0\tilde \lambda_1^2,~
             \mathcal T_{\tilde \lambda_2,s_0} \big(\tilde H^{t+1}\big)_{G_j}\ne 0 \right\} }.
        \end{aligned}
    \end{equation}

     Next, we bound the three terms in the last inequality respectively. 

    \textbf{First term.}
    Let $s' = \frac{ s}{8\Delta^2}$, $s_0'=s_0$. Then, we show that under the event $\mathcal E(s',s_0)$ in Lemma \ref{lemma:iht1}, only less than $s'$ groups in $\Xi_{S_{OG}}$ could be discovered by $ \mathcal T_{\tilde \lambda_1,s_0}$. If not so, choose any $s'$ discovered groups and construct an $S' \subset S_{OG}$ and $S' \in \mathcal S^{m,d}(s',s_0)$, which satisfies
    $$
        \sum_{(i,j) \in S'} \Xi_{ij}^2  
        \ge \sum_{(i,j) \in S'} \Xi_{ij}^2 \mathrm I\Big\{\mathcal T_{\tilde \lambda_1,s_0} \big(\Xi_{S_{OG}}\big)_{ij}\ne 0 \Big\} 
        \ge~  s's_0\tilde \lambda_1^2\\
        \ge~ \frac{ s}{8\Delta^2}\cdot s_0 \cdot  \frac{8\sigma^2}{n}\Delta 
        = \frac{ ss_0\sigma^2}{n\Delta}.
    $$
When $\Delta$ is sufficiently large, we can show that $ \log\left( 8 \Delta^2\right) < \frac{s_0}3 \Delta$, which leads that 
    $$
    \Delta< \Delta(s',s_0):= \frac{1}{s_0}\log\frac{em}{s'} + \log\frac{ed}{s_0} < \frac43 \Delta.
    $$
    Thus we have 
    $$
        \sum_{(i,j) \in S'} \Xi_{ij}^2 
        \ge \frac{ ss_0\sigma^2}{n\Delta}
        =\frac{ 8 s's_0\sigma^2 \Delta}{n} 
        > \frac{8 s's_0\sigma^2 }{n}\cdot \frac34 \Delta(s',s_0)
        =\frac{6s's_0\sigma^2\Delta(s',s_0)}{n},
    $$
    which contradicts the event $\mathcal E(s',s_0)$ in Lemma \ref{lemma:iht1} with high probability. Thus we show only less than $s'$ groups in $\Xi_{S_{OG}}$ are discovered. Similarly, we can show only less than $s's_0$ entries are discovered in $\Xi_{S_{OG}}$. If not so, take $S_2 \in \mathcal S^{m,d}(s',s_0)$ and $S_2 \subset S_{OG}$, whose entries are all falsely discovered in $S_{OG}$, which leads
    \begin{equation}\label{entryOGabsurd}
        \sum_{(i,j) \in S_2} \Xi_{ij}^2 
        \ge s's_0 \tilde \lambda_1^2 
        \ge \frac{ ss_0\sigma^2}{n\Delta}
        \ge \frac{6s's_0\sigma^2\Delta(s',s_0)}{n}.
    \end{equation}
    Under the event $\mathcal E(s',s_0)$ in Lemma \ref{lemma:iht1}, \eqref{entryOGabsurd} leads to an absurd again. Thus we can bound the first term in \eqref{ogxi} by
    \begin{equation}\label{Xisucess}
        \sum_{(i,j) \in S_{OG}} \Xi_{ij}^2 \mathrm I\Big\{\mathcal T_{\tilde \lambda_1,s_0} 
                    \big(\Xi_{S_{OG}}\big)_{ij}\ne 0 \Big\} 
        \le  \sup_{S_2 \in \mathcal S(s',s_0)} \sum_{(i,j)\in S_2} \Xi_{ij}^2 
        \le   \frac{6\sigma^2 s' s_0 \Delta(s',s_0)}{n} 
        \le \frac{ \sigma^2 s s_0 }{n\Delta}.
    \end{equation}

    \textbf{Second term.} 
    Note that 
    \begin{equation}
        \begin{aligned}
        & \sum_{(i,j) \in S_{OG}} \Xi_{ij}^2\mathrm I\Big\{|\Xi_{ij}|< \tilde \lambda_1,
                     ~ |\Xi_{ij}+ \langle\Phi_{ij}^\top, \beta^* - \tilde\beta^{t} \rangle | \ge \tilde \lambda_2 \Big\} \\
        \le & \sum_{(i,j) \in S_{OG}} \Xi_{ij}^2\mathrm I\Big\{|\Xi_{ij}|< \tilde \lambda_1,
                     ~ |\Xi_{ij}| + |\langle\Phi_{ij}^\top, \beta^* - \tilde\beta^{t} \rangle| \ge 2\tilde \lambda_1 \Big\}\\
        \le & \sum_{(i,j) \in S_{OG}} \Xi_{ij}^2\mathrm I\Big\{|\Xi_{ij}|< \tilde \lambda_1 \le|\langle\Phi_{ij}^\top, \beta^* - \tilde\beta^{t} \rangle|  \Big\}\\
        \le & \sum_{(i,j) \in S_{OG} } \langle \Phi_{(i,j)}^\top, \beta^*-\tilde \beta^t \rangle^2
        = \left\| \Phi_{S_{OG}}  \left( \beta^*-\tilde \beta^t \right) \right\|_2^2.
        \end{aligned}
    \end{equation}
    Thus we can bound the second term in \eqref{ogxi} by
    \begin{equation}\label{ElementXifailHsuccess}
        \sqrt{\sum_{(i,j) \in S_{OG}}\Xi_{ij}^2\mathrm I\Big\{|\Xi_{ij}|< \tilde \lambda_1,
             ~ |\Xi_{ij}+ \langle\Phi_{ij}^\top, \beta^* - \tilde\beta^{t} \rangle| \ge \tilde \lambda_2 \Big\} }
        \le \left\| \Phi_{S_{OG}}  \left( \beta^*-\tilde \beta^t \right) \right\|_2
        \le \delta \left\| \tilde \beta^t - \beta^* \right\|_2.
    \end{equation}

    \textbf{Third term.} 
    For any group $j \notin G^*$ such that $\sum_{k \in S_{G_j}\cap S_{OG}} \Xi_{kj}^2 \mathrm I (|\Xi_{kj}|\ge \tilde \lambda_1)< s_0\tilde \lambda_1^2$ and $\mathcal T_{\tilde \lambda_2,s_0} \big(\tilde H^{t+1}\big)_{G_j} \ne \mathbf 0$ (where the index ranges over $S_{OG}$), we have 
    \begin{equation}\label{OGgroupscaled}
        \begin{aligned}
        s_0\tilde \lambda_2^2  
        \le& \sum_{k \in S_{G_j} \cap S_{OG}} \Big( \underbrace{\Xi_{kj} + \langle\Phi_{kj}^\top, \beta^* - \tilde\beta^{t} \rangle}_{\tilde H_{kj}^{t+1}} \Big)^2 
            \mathrm I\Big( |\Xi_{kj} + \langle\Phi_{kj}^\top, \beta^* - \tilde\beta^{t} \rangle | \ge \tilde \lambda_2 \Big)\\
        \le& \sum_{k \in S_{G_j} \cap S_{OG}} 2 \Xi_{kj}^2 \mathrm I\Big( |\tilde H_{kj}^{t+1}|\ge\tilde \lambda_2\Big)
            +\sum_{k \in S_{G_j} \cap S_{OG}} 2 \langle\Phi_{kj}^\top, \beta^* - \tilde\beta^{t} \rangle^2 \mathrm I\Big( |\tilde H_{kj}^{t+1}|\ge\tilde \lambda_2\Big)\\
        \le& \sum_{k \in S_{G_j} \cap S_{OG}} 2 \Xi_{kj}^2 \mathrm I\Big( |\Xi_{kj}|\ge\tilde \lambda_1\Big)
            + \sum_{k \in S_{G_j} \cap S_{OG}} 2 \Xi_{kj}^2 \mathrm I\Big( |\Xi_{kj}|<\tilde \lambda_1 \le |\langle\Phi_{kj}^\top, \beta^* - \tilde\beta^{t} \rangle|\Big)\\
            &+  2 \sum_{k \in S_{G_j} \cap S_{OG}} \langle\Phi_{kj}^\top, \beta^* - \tilde\beta^{t} \rangle^2\\
        \le & 2s_0 \tilde \lambda_1^2 + 4\Upsilon\left(S_{G_j} \cap S_{OG}, \tilde \beta^t \right),
        \end{aligned}
    \end{equation}
    which leads to $s_0 \tilde \lambda_1^2 \le  2\Upsilon\left(S_{G_j} \cap S_{OG}, \tilde \beta^t \right)$. Thus we can bound the third term in \eqref{ogxi} as
    \begin{equation}\label{GroupXifailHsuccess}
        \begin{aligned}
        & \sqrt{\sum_{(i,j) \in S_{OG}} \Xi_{ij}^2\mathrm I\Big\{|\Xi_{ij}|\ge \tilde \lambda_1,
                    ~ \sum_{k \in S_{G_j} \cap S_{OG}} \Xi_{kj}^2 \mathrm I (|\Xi_{kj}|\ge \tilde \lambda_1)< s_0\tilde \lambda_1^2,~
                     \mathcal T_{\tilde \lambda_2,s_0} \big(\tilde H^{t+1}\big)_{ij}\ne 0 \Big\} }\\
        \le & \sqrt{\sum_{j \in \tilde G^{t+1} \cap (G^*)^c} s_0\tilde \lambda_1^2 \mathrm I \Big\{
                     s_0 \tilde \lambda_1^2 \le  2\Upsilon\left(S_{G_j} \cap S_{OG}, \tilde \beta^t \right) \Big\} }\\
        \le & \sqrt{2 \Upsilon\left(S_{OG}, \tilde \beta^t \right) }\\
        < & \frac32\delta \left\| \tilde \beta^t - \beta^* \right\|_2.
        \end{aligned}
    \end{equation}

    Combining these three terms \eqref{Xisucess}, \eqref{ElementXifailHsuccess} and \eqref{GroupXifailHsuccess} together, we finally get that 
    \begin{equation}
        P\left( \sqrt{\sum_{(i,j) \in S_{OG}} \Xi_i^2 \mathrm I\Big\{ \mathcal T_{\tilde \lambda_2,s_0} 
        \big(\tilde H^{t+1}\big)_{ij}\ne 0\Big\} }
        <  \sqrt{\frac{ \sigma^2 ss_0}{n\Delta} } + \frac52 \delta \|\tilde \beta^t - \beta^* \|_2 \right) \to 1,
    \end{equation}
    as $\Delta, \frac{ss_0}{\Delta} \to \infty$.

\end{proof}

Similarly, we can bound the $\ell_2$-norm of the selected entries of $\Xi$ within the true groups $G^*$, which can be expressed in the following lemma.
\begin{lemma}\label{IGChi2}
Assume all the conditions in Theorem \ref{T6} hold. As ${\Delta} \to \infty$, we have 
\begin{equation} 
    P\left( \sqrt{\sum_{(i,j) \in S_{G^*} } \Xi_{ij}^2 \mathrm I\Big\{ |\Xi_{ij}|\ge \tilde \lambda_1 \Big\} }
    <  \sqrt{\frac{ \sigma^2 ss_0}{n\Delta} } \right) \to 1.
\end{equation}
\end{lemma}

 \begin{proof}
   Since for any $ (i,j) \in S_{G^*} $, $\Xi_{ij}$ is sub-Gaussian with parameter $\frac{\sigma^2}{n}$, we conclude that 
    \begin{equation}
        \begin{aligned}
        \mathbf E \Big( \Xi_{ij}^2 \mathrm I \left\{ |\Xi_{ij}|\ge \tilde \lambda_1 \right\} \Big)
        =& \int_0^\infty P\Big( \Xi_{ij}^2 \mathrm I \left\{ |\Xi_{ij}|\ge \tilde \lambda_1 \right\} >u \Big) \mathrm d u\\
        =& \int_0^{\tilde \lambda_1^2} P\Big( |\Xi_{ij}|\ge \tilde \lambda_1 \Big) \mathrm d u
             + \int_{\tilde \lambda_1^2}^\infty P\Big( |\Xi_{ij}|\ge \sqrt u \Big) \mathrm d u\\
        \le& 2\tilde \lambda_1^2 \exp\left( -\frac{n\tilde \lambda_1^2}{2\sigma^2} \right) +
             \int_{\tilde \lambda_1^2}^\infty 2 \exp\left( -\frac{nu}{2\sigma^2} \right)\mathrm d u\\
        =& \left( 2\tilde \lambda_1^2 + \frac{4\sigma^2}{n} \right) \exp\left( -\frac{n\tilde \lambda_1^2}{2\sigma^2} \right)\\
        \le& 3\tilde \lambda_1^2 \exp\left( -\frac{n\tilde \lambda_1^2}{2\sigma^2} \right),
        \end{aligned}
    \end{equation}
    where the last inequality follows from $\tilde\lambda_1^2 =   \frac{8\sigma^2}{n} \Delta \ge \frac{4\sigma^2}{n}$.
    Thus, based on Markov inequality we have 
    \begin{equation}\label{IGFDentry}
        \begin{aligned}
        P\left(\sum_{(i,j) \in S_{G^*} } \Xi_{ij}^2 \mathrm I\Big\{ |\Xi_{ij}|\ge \tilde \lambda_1 \Big\} \ge \frac{ \sigma^2 ss_0}{n\Delta} \right)
        \le & \frac{n\Delta}{ \sigma^2 ss_0}\cdot \mathbf E \left( \sum_{(i,j) \in S_{G^*} } \Xi_{ij}^2 \mathrm I\Big\{ |\Xi_{ij}|\ge \tilde \lambda_1 \Big\} \right)\\
        \le &3 \Delta \cdot \frac{\tilde \lambda_1^2 n }{ \sigma^2 } \cdot \frac{d}{s_0} \cdot \exp\left( -\frac{n\tilde \lambda_1^2}{2\sigma^2} \right)\\
        \le & \frac34 \left( \frac{n\tilde \lambda_1^2}{\sigma^2} \right)^2 \exp\left( -\frac{n\tilde \lambda_1^2}{4\sigma^2} \right)\\
        = &  o(1),~\text{ as } \Delta \to \infty,
        \end{aligned}
    \end{equation}
    where the last inequality uses $\tilde \lambda_1^2 \ge \frac{4\sigma^2\Delta}{n}$ and $\frac d{s_0} < \exp(\Delta) \le \exp\left( \frac{n\tilde \lambda_1^2}{4\sigma^2} \right)$.
 \end{proof}

Now we turn to analyze the term of the estimation error on $S^*$. Under proper beta-min conditions, we can bound $\sum_{(i,j) \in S^*} \left( \tilde H_{ij}^{t+1}\right)^2 \mathrm I\left( (i,j) \notin \tilde S^{t+1}\right)$ by the following lemma.
\begin{lemma}\label{supporterror}
    Assume all the conditions in Theorem \ref{T6} hold. Then, for any $\epsilon >0$, we have 
\begin{equation} 
    P\left( \sqrt{\sum_{(i,j) \in S^*} \left( \tilde H_{ij}^{t+1}\right)^2 \mathrm I\left( (i,j) \notin \tilde S^{t+1}\right)} 
    < \frac4\epsilon \delta \left\| \tilde\beta^t - \beta^* \right\|_2 + 2\sqrt{\frac{\sigma^2ss_0}{n \Delta}} \right) \to 1,
\end{equation}
as $\Delta \to \infty$.
\end{lemma}

\begin{proof}
Note that 
\begin{equation}\label{supportH}
    \begin{aligned}
    & \sqrt{\sum_{(i,j) \in S^*}\left( \tilde H_{ij}^{t+1}\right)^2 
         \mathrm I\left( (i,j) \notin \tilde S^{t+1}\right)}\\
    \le & \sqrt{\sum_{(i,j) \in S^*} \left( \tilde H_{ij}^{t+1}\right)^2 
         \mathrm I\left( |\tilde H_{ij}^{t+1}| < \tilde \lambda_2 \right)} \\
    &~+ \sqrt{\sum_{(i,j) \in S^*} \left( \tilde H_{ij}^{t+1}\right)^2 
         \mathrm I\left( |\tilde H_{ij}^{t+1}| \ge \tilde \lambda_2, ~
         \sum_{k \in S^* \cap S_{G_j}}\left( \tilde H_{kj}^{t+1}\right)^2  \mathrm I ( |\tilde H_{kj}^{t+1}| \ge \tilde \lambda_2) < s_0 \tilde \lambda_2^2 \right)}\\
    < & \sqrt{\sum_{(i,j) \in S^*}\tilde \lambda_2^2 \cdot
         \mathrm I\left( |\tilde H_{ij}^{t+1}| < \tilde \lambda_2 \right)}
        \\&+ \sqrt{\sum_{j \in G^*} s_0 \tilde \lambda_2^2 \cdot 
         \mathrm I\left( \sum_{k \in S^* \cap S_{G_j}} \left( \tilde H_{kj}^{t+1}\right)^2 \mathrm I (|\tilde H_{kj}^{t+1}| \ge \tilde \lambda_2)< s_0 \tilde \lambda_2^2 \right)}\\
    \le & \sqrt{\sum_{(i,j) \in S^*} \tilde \lambda_2^2 \cdot
         \mathrm I\left( |\tilde H_{ij}^{t+1}| < \tilde \lambda_2 \right)} 
         + \sqrt{\sum_{j \in G^*} s_0 \tilde \lambda_2^2 \cdot 
         \mathrm I\left( \sum_{k \in S^* \cap S_{G_j}} \left( \tilde H_{kj}^{t+1}\right)^2 < (s_j+s_0) \tilde \lambda_2^2 \right)}.\\
    \end{aligned}
\end{equation}
Next, we analyze these two terms respectively.

\textbf{First term.} 
    Recall that $\tilde H_{ij}^{t+1} = \beta_{ij}^* + \langle \Phi_{ij}^\top, \beta^* - \tilde \beta^{t} \rangle + \Xi_{ij}$ and $|\beta_{ij}^*| \ge (1+ \epsilon)\tilde \lambda_2$ holds for every support entry. Therefore, we have 
    \begin{equation}
        \begin{aligned}
             \sqrt{\sum_{(i,j) \in S^*} \tilde \lambda_2^2 \cdot
                     \mathrm I\left( |\tilde H_{ij}^{t+1}| < \tilde \lambda_2 \right)}
            \le & \sqrt{\sum_{(i,j) \in S^*} \tilde \lambda_2^2 \cdot
                     \mathrm I\Big( |\beta_{ij}| - | \langle \Phi_{ij}^\top, \beta^* - \tilde \beta^{t} \rangle + \Xi_{ij}| < \tilde \lambda_2 \Big)}\\ 
            \le & \sqrt{\sum_{(i,j) \in S^*} \tilde \lambda_2^2 \cdot
                     \mathrm I\Big( \epsilon \tilde \lambda_2 < |\langle \Phi_{ij}^\top, \beta^* - \tilde \beta^{t} \rangle| + |\Xi_{ij}| \Big)}\\ 
            \le & \sqrt{\sum_{(i,j) \in S^*} \tilde \lambda_2^2 \cdot
                     \mathrm I\Big( |\langle \Phi_{ij}^\top, \beta^* - \tilde \beta^{t} \rangle|> \frac{\epsilon}2 \tilde \lambda_2 \Big)}\\
                 &+ \sqrt{\sum_{(i,j) \in S^*} \tilde \lambda_2^2 \cdot
                     \mathrm I\Big( |\Xi_{ij}|> \frac{\epsilon}2 \tilde \lambda_2 \Big)}\\
            < & \frac2\epsilon \sqrt{\Upsilon\left(S^*, \tilde \beta^t \right) }
                 + \sqrt{\sum_{(i,j) \in S^*} \tilde \lambda_2^2 \cdot
                     \mathrm I\Big( |\Xi_{ij}|> \frac{\epsilon}2 \tilde \lambda_2 \Big)}. \\ 
        \end{aligned}
    \end{equation}
Under the fixed $S^*$ and based on Markov inequality, we have
    \begin{equation}
        \begin{aligned}
            P \left( \sum_{(i,j) \in S^*} \tilde \lambda_2^2 \cdot
                     \mathrm I\Big( |\Xi_{ij}|> \frac{\epsilon}2 \tilde \lambda_2 \Big)  
                 \ge \frac{\sigma^2ss_0}{n \Delta} \right)
            \le & \frac{n \Delta}{\sigma^2ss_0} \sum_{(i,j) \in S^*} \tilde \lambda_2^2 \cdot  P \Big( |\Xi_{ij}|> \frac{\epsilon}2 \tilde \lambda_2 \Big)\\
            \le & \frac1{16} (\frac{n\tilde \lambda_2^2}{\sigma^2} )^2 \exp ( - \frac{\epsilon^2}{8} \cdot \frac{n\tilde \lambda_2^2}{\sigma^2} )\\
            =&o(1) ,~ \text{ as }\Delta \to \infty,
        \end{aligned}
    \end{equation}
     where recall that $\tilde\lambda_2 =2\sqrt{ \frac{8\sigma^2}{n} \Delta  }$.
     Thus the first term in \eqref{supportH} is bounded by 
    \begin{equation}\label{supportH1}
        \sqrt{ \sum_{(i,j) \in S^*} \tilde \lambda_2^2 \cdot
                     \mathrm I\left( |\tilde H_{ij}^{t+1}| < \tilde \lambda_2 \right)}
        < \frac2\epsilon \delta \left\| \tilde \beta^{t} - \beta^* \right\|_2 
           + \sqrt{\frac{\sigma^2ss_0}{n \Delta}}.
    \end{equation}
    
    \textbf{Second term.}  
    Let $s_j = \| \beta^*_{{G_j}}\|_0$ for $j \in G^*$. For $\forall j \in G^*$, by element-wise beta-min condition $\min\limits_{(i,j) \in S^*}|\beta^*_{ij}| \ge (\sqrt{2} + \epsilon)\tilde \lambda_2$ and group-wise beta-min condition $\min\limits_{j \in G^*}\|\beta^*_{{G_j}}\|_2 \ge (\sqrt{2} + \epsilon) \sqrt{s_0} \tilde \lambda_2$, we conclude that
    $$
    \|\beta^*_{S_{G_j}}\|_2
    \ge (\sqrt{2} + \epsilon) \sqrt{s_0 \vee s_j} \tilde \lambda_2
    \ge \sqrt{s_j + s_0} \tilde \lambda_2 + \epsilon \sqrt{ s_j \vee s_0 } \tilde \lambda_2.
    $$
    Therefore, we have 
    \begin{equation}\label{supportgrouptype2}
       \begin{aligned}
        &\mathrm I\left(  \sum_{k \in S^* \cap S_{G_j}} \left( \tilde H_{kj}^{t+1}\right)^2 < (s_j+s_0) \tilde \lambda_2^2 \right) \\
        \le & \mathrm I\left(  \sqrt{ \sum_{k \in S^* \cap S_{G_j}}  \left(\beta_{kj}^* \right)^2 } - \sqrt{ \sum_{k \in S^* \cap S_{G_j}} \big( \langle \Phi_{ij}^\top, \beta^* - \tilde \beta^{t} \rangle + \Xi_{kj} \big)^2} 
            < \sqrt{s_j+s_0} \tilde \lambda_2 \right) \\
        \le & \mathrm I\left(  \sum_{k \in S^* \cap S_{G_j}} \big( \langle \Phi_{ij}^\top, \beta^* - \tilde \beta^{t} \rangle + \Xi_{kj} \big)^2 
            > \epsilon^2 (s_j \vee s_0) \tilde \lambda_2^2 \right) \\
        \le & \mathrm I\left( \Upsilon\left( S^* \cap S_{G_j}, \tilde \beta^t \right)  + \sum_{k \in S^* \cap S_{G_j}}  \Xi_{kj}^2 
            > \frac{\epsilon^2}2 (s_j \vee s_0) \tilde \lambda_2^2 \right)\\
        \le & \mathrm I\left( \Upsilon\left( S^* \cap S_{G_j}, \tilde \beta^t \right)  > \frac{\epsilon^2}4 (s_j \vee s_0) \tilde \lambda_2^2 \right)  + \mathrm I\left( \sum_{k \in S^* \cap S_{G_j}}  \Xi_{kj}^2 
            > \frac{\epsilon^2}4 (s_j \vee s_0) \tilde \lambda_2^2 \right),
        \end{aligned}
    \end{equation}
    which yields that 
    \begin{equation}\label{supportsecond}
        \begin{aligned}
        & \sqrt{\sum_{j \in G^*} s_0 \tilde \lambda_2^2 \cdot 
             \mathrm I\left( \sum_{k \in S^* \cap S_{G_j}} \left( \tilde H_{kj}^{t+1}\right)^2 < (s_j+s_0) \tilde \lambda_2^2 \right)}\\
        \le& \sqrt{\sum_{j \in G^*} s_0 \tilde \lambda_2^2 \cdot 
             \mathrm I\left( \Upsilon\left( S^* \cap S_{G_j}, \tilde \beta^t \right) > \frac{\epsilon^2}4 (s_j \vee s_0) \tilde \lambda_2^2 \right)}
             \\
             &+\sqrt{\sum_{j \in G^*} s_0 \tilde \lambda_2^2 \cdot 
             \mathrm I\left( \sum_{k \in S^* \cap S_{G_j}} \Xi_{kj}^2 > \frac{\epsilon^2}4 (s_j \vee s_0) \tilde \lambda_2^2 \right)}\\
        \le& \frac2{\epsilon} \sqrt{ \Upsilon\left( S^*, \tilde \beta^t \right) } + \sqrt{\sum_{j \in G^*} s_0 \tilde \lambda_2^2 \cdot 
             \mathrm I\left( \sum_{k \in S^* \cap S_{G_j}} \Xi_{kj}^2 > \frac{\epsilon^2}4 (s_j \vee s_0) \tilde \lambda_2^2 \right)}\\
        \le&  \frac2{\epsilon} \delta \left\| \tilde \beta^{t} - \beta^* \right\|_2 
             + \sqrt{\sum_{j \in G^*} s_0 \tilde \lambda_2^2 \cdot 
             \mathrm I\left( \sum_{k \in S^* \cap S_{G_j}} \Xi_{kj}^2 > \frac{\epsilon^2}4 (s_j \vee s_0) \tilde \lambda_2^2 \right)}.
        \end{aligned} 
    \end{equation}
    Now, based on Lemma \ref{lemma:iht1} and Theorem 2.1 in \citet{hsu2012tail}, for every $t>0$ and every support group $G_j$, we obtain that
    \begin{equation}
        P \left( \frac n{\sigma^2} \|\Xi_{S^* \cap S_{G_j}} \|_2^2 \ge s_j + 2(1+\delta)\sqrt{s_j t} + 2(1+ \delta)t \right) \le e^{-t}.
    \end{equation}
    Let $t = \frac{n\epsilon^2 (s_j \vee s_0)}{24 \sigma^2} \tilde \lambda_2^2$. We can show $t > s_j$, as $\Delta\to \infty$. From $\delta \le \frac14$ we obtain 
    \begin{equation} 
         s_j + 2(1+\delta)\sqrt{s_j t} + 2(1+ \delta)t 
         ~\le~ 6t 
         = \frac{n\epsilon^2 (s_j \vee s_0)}{4\sigma^2} \tilde \lambda_2^2,
    \end{equation}
    which implies that $P\left( \sum_{k \in S^* \cap S_{G_j}} \Xi_{kj}^2 > \frac{\epsilon^2}4 (s_j \vee s_0) \tilde \lambda_2^2\right) 
    \le \exp\left(-\frac{n\epsilon^2 (s_j \vee s_0)}{24 \sigma^2} \tilde \lambda_2^2 \right)
    $. Therefore, by Markov inequality, we have
    \begin{equation}\label{xivee}
        \begin{aligned}
        & P \left\{ \sum_{j \in G^*}  s_0 \tilde \lambda_2^2 \cdot 
            \mathrm I\left( \sum_{k \in S^* \cap S_{G_j}} \Xi_{kj}^2 > \frac{\epsilon^2}4 (s_j \vee s_0) \tilde \lambda_2^2 \right)
            \ge \frac{\sigma^2 ss_0}{n\Delta} \right\} \\
        \le & \frac{n\Delta}{\sigma^2 ss_0} \sum_{j \in G^*} s_0 \tilde \lambda_2^2 \cdot 
            P\left( \sum_{k \in S^* \cap S_{G_j}} \Xi_{kj}^2 > \frac{\epsilon^2}4 (s_j \vee s_0) \tilde \lambda_2^2 \right) \\
        \le & \frac{n\Delta}{\sigma^2 ss_0} \sum_{j \in G^*} s_0 \tilde \lambda_2^2 \cdot  \exp\left(-\frac{n\epsilon^2 (s_j \vee s_0)}{24 \sigma^2} \tilde \lambda_2^2 \right)\\
        \le & \frac{n\Delta}{\sigma^2 } \tilde \lambda_2^2 \cdot 
            \exp(-\frac{n\epsilon^2 s_0}{24 \sigma^2} \tilde \lambda_2^2 )\\
        = & \frac{1}{32} (\frac{n\tilde \lambda_2^2}{\sigma^2} )^2 
            \exp ( -\frac{\epsilon^2 s_0}{24} \cdot \frac{n\tilde \lambda_2^2}{\sigma^2} )\\
        = & o(1), ~\text{ as }\Delta \to \infty.
        \end{aligned}
    \end{equation}
    Combining \eqref{supportsecond} and \eqref{xivee}, we bound the second term by 
    \begin{equation}\label{supportH2}
        \sqrt{\sum_{ j \in G^*} s_0 \tilde \lambda_2^2 \cdot 
             \mathrm I\left( \sum_{k \in S^* \cap S_{G_j}} \left( \tilde H_{kj}^{t+1}\right)^2 < (s_j+s_0) \tilde \lambda_2^2 \right)}\\
        \le \frac2{\epsilon} \delta \left\| \tilde \beta^{t} - \beta^* \right\|_2 
             + \sqrt{\frac{\sigma^2 ss_0}{n\Delta} }.
    \end{equation}
Finally, based on \eqref{supportH1} and \eqref{supportH2}, we have 
\begin{equation}
    P\left( \sqrt{\sum_{(i,j) \in S^*} \left( \tilde H_{ij}^{t+1}\right)^2 \mathrm I\left( (i,j) \notin \tilde S^{t+1}\right)} 
    < \frac4\epsilon \delta \left\| \tilde\beta^t - \beta^* \right\|_2 + 2\sqrt{\frac{\sigma^2ss_0}{n \Delta}} \right) \to 1,
\end{equation}
as $\Delta \to \infty$.
    
\end{proof}

\section*{Appendix C: Example of sub-Gaussian random design}\label{radnomdesign}
Assume $\zeta_1,\zeta_2,\cdots,\zeta_n$ are independent and identically distributed $p$-dimensional isotropic, sub-Gaussian random vectors, forming a random matrix $Z\in  \mathbb{R}^{n\times p}$ , whose $i$-th row $Z_i$ is denoted by $\zeta_i$. In this paper, we consider a random design matrix $X$, which is generated as follows:
\begin{equation}\label{randomdesign}
X = Z\Sigma^{\frac{1}{2}},
\end{equation}
where $\Sigma$ is the covariance matrix.

According to the theoretical framework of \citet{zhou2009restricted} and \citet{mendelson2008uniform}, given the vector space $\mathcal{V}\in  \mathbb{R}^p$, the key point is to construct the restricted isometric properties between $Xv$ and $\Sigma^{\frac{1}{2}}v$ for $v \in \mathcal{V}$. The empirical process technique plays an important role, and we define Gaussian complexity first:
\begin{definition}[Gaussian complexity]
Given a subset $\mathcal{V}\subseteq  \mathbb{R}^p$, we define the Gaussian complexity of $\mathcal{V}$ as follows:
$$
\ell^*(\mathcal{V}) \coloneqq \mathbf{E}_g\sup_{\theta\in \mathcal{V}}\left|\sum_{i=1}^pg_i\theta_i\right|,
$$
where $\theta_i$ is each component of vector $\theta$, and $g_1, g_2, \cdots, g_p$ are independently drawn from $\mathcal{N}(0,1)$ distributions. In particular, given a non-negative definite matrix $\Sigma$, we define
$$
\tilde{\ell}^*(\mathcal{V}) \coloneqq \ell^*(\Sigma^{\frac{1}{2}}\mathcal{V}) = \mathbf{E}_g\sup_{v\in \mathcal{V}}\left|\langle \Sigma^{\frac{1}{2}}v, g\rangle\right| = \mathbf{E}_g\sup_{v\in \mathcal{V}}\left|\langle v, \Sigma^{\frac{ 1}{2}}g\rangle\right|.
$$
\end{definition}
According to the homogeneity of the norm, we only need to consider the subset of the unit ball sphere $S^{p-1}$, which is defined as:
$$
S^{p-1} \coloneqq \left\{v\in  \mathbb{R}^p: \|v\|_2 = 1\right\}.
$$
The main technique we use is the following empirical process result:
\begin{lemma}[Theorem 2.1 in \citet{mendelson2008uniform}]\label{mendelson1}
Let $1 \leq n \leq p$ and $0 < \delta < 1$. Let $\zeta \in \mathbb{R}^p$ be an isotropic sub-Gaussian random vector with parameter $\alpha$. Let $\zeta_1,\dots,\zeta_n$ be the independent copies of $\zeta$. Define $X$ as the random matrix in \eqref{randomdesign}, and let $\mathcal{V}$ satisfy $\Sigma^{\frac{1}{2}} v \in S^{p-1}$ for all $v \in \mathcal{V}$. If sample size $n$ satisfies
$
  n > c'\alpha^4 \delta^2 \tilde{\ell}^*(\mathcal{V})^2,
  $
  then with probability of at least $1 - \exp(-\bar{c}\delta^2n/\alpha^4)$, for all $v \in \mathcal{V}$, we have
$$
1 - \delta \leq \| X v \|_2 / \sqrt{n} \leq 1 + \delta,
$$
where $c',\bar{c} >0$ are some absolute constants.
\end{lemma}
Denote parameter space 
$
\mathcal{V} := \Theta^{m,d}(s, s_0)\cap \{v:\Sigma^{\frac{1}{2}}v \in S^{p-1}\}.
$
Then, given any $ v \in \mathcal{V}$, we assume that
$$
\rho_{\min} \leq \frac{\left\|\Sigma^{1/2} v\right\|_{2 }}{\|v\|_2} \leq \rho_{\max}.
$$
Next, we derive the Gaussian complexity $\tilde{\ell}^*(\mathcal{V})$ for the double sparse structure. We denote
$$
U := \Theta^{m,d}(s, s_0)\cap \{v:\|\Sigma^{\frac{1}{2}}v\|_2 \le 1\}.
$$
Recall that $m\times d = p$. 
Then, we have
\begin{equation*}
\begin{aligned}
\tilde{\ell}^*\left(\mathcal{V}\right) \le \widetilde{\ell}^*\left(U\right) & = \mathbf{E}_g \sup _{t \in U}\left|\left\langle t, \Sigma^{1 / 2} g\right\rangle\right| \\
& \leq 3 \sqrt{\log \left|\Theta^{m,d}(s, s_0)\right|} \sup _{t \in U} \sqrt{ \mathbf{E}_g \left|\left\langle t, \Sigma^{1 / 2} g\right\rangle\right|^2} \\
& \leq C \sqrt{ss_0 \log (e d / s_0)+s\log (em/s)} \sup _{t \in U}\left\|\Sigma^{1 / 2} t\right\|_2 \\
& \leq C \sqrt{ss_0 \log (e d / s_0)+s\log (em/s)},
\end{aligned}
\end{equation*}
where the first inequality follows from  Chapter 3 in \citet{ledoux1991probability}.
Note that $$\frac{\|Xv\|_2}{\sqrt{n}\|v\|_2}=\frac{\|Xv\|_2}{\sqrt{n}\|\Sigma^{\frac{1 }{2}}_Sv\|_2}\cdot\frac{\|\Sigma^{\frac{1}{2}}_Sv\|_2}{\|v\|_2}.$$
Therefore, by Lemma \ref{mendelson1},  for $
  n > C' \alpha^4 \delta^2 \cdot\left(ss_0 \log (e d / s_0)+s\log (em/s)\right)
  $, we have
$$
(1 - \delta)\rho_{\min} \leq \frac{\| X v \|_2}{\sqrt{n}\|v\|_2} \leq (1 + \delta)\rho_{\max}.
$$
This proves the satisfaction of the DSRIP condition under the sub-Gaussian random design.
\vskip 0.2in

\bibliographystyle{unsrtnat}
\bibliography{ref}

\end{document}